\documentclass[12pt]{article}         


 \usepackage[T1]{fontenc}              
 \usepackage[latin1]{inputenc}         
      \input{dehyphtex}                
 \usepackage[width=16cm, height=22cm]{geometry}   
 \usepackage{mathtools} 
 \usepackage{amsmath,amssymb}          
\usepackage{hyperref} 
 \usepackage[amsmath,thmmarks,hyperref]{ntheorem}

 \usepackage{icomma}                   
 \usepackage{enumerate}                
 \usepackage{color}                    
 \usepackage{setspace}                 
 \usepackage{needspace}                
 \usepackage{url}                      
 \usepackage{mathrsfs}                 
 \usepackage{graphicx}                 
 \usepackage{slashed}
 \usepackage{esint}

 \setlength\parindent{0pt}
 
 \usepackage{xypic}

 \numberwithin{equation}{section}
 
 \usepackage[numbers,square]{natbib}
 \bibliographystyle{alpha}


\theoremstyle{nonumberplain}  
\theoremheaderfont{\itshape}  
\theorembodyfont{\normalfont}  
\theoremseparator{.}  
\theoremsymbol{$\Box$}
\newtheorem{proof}{Proof} 
  
\theoremstyle{plain}  
\theoremheaderfont{\normalfont\bfseries}  
\theorembodyfont{\itshape}  
\theoremsymbol{~}
\theoremseparator{.}  
\newtheorem{proposition}{Proposition}[section]  
\newtheorem{corollary}[proposition]{Corollary}  
\newtheorem{lemma}[proposition]{Lemma}  
\newtheorem{theorem}[proposition]{Theorem}   

\theorembodyfont{\normalfont}  
 
\newtheorem{remark}[proposition]{Remark}
\newtheorem{example}[proposition]{Example}

\newtheorem{definition}[proposition]{Definition}

\theoremstyle{nonumberplain}
\theoremheaderfont{\normalfont\bfseries}  
\theorembodyfont{\itshape}  
\theoremsymbol{~}
\theoremseparator{.}


\newcommand*{\res}{\operatorname{res}}
\newcommand{\R}{\mathbb{R}}

\newcommand{\V}{\mathcal{V}}
\newcommand{\N}{\mathbb{N}}
\newcommand{\Z}{\mathbb{Z}}
\newcommand{\C}{\mathbb{C}}
\newcommand{\dd}{\mathrm{d}}

\newcommand{\End}{\mathrm{End}}

\newcommand{\id}{\mathrm{id}}

\renewcommand{\>}{\right\rangle}

\renewcommand{\tilde}{\widetilde}
\renewcommand{\hat}{\widehat}

\newcommand{\e}{\mathrm{e}}

\title{Asymptotic Expansions and Conformal Covariance of the Mass of Conformal Differential Operators}
\author{ Matthias Ludewig}


\begin{document}

\maketitle
 
\begin{center}
  Max-Planck Institut f\"ur Mathematik \\ 
  Vivatsgasse 7 / 53111 Bonn, Germany \\ \medskip
 maludewi@mpim-bonn.mpg.de \\ \medskip
\end{center}

\begin{abstract}
We give an explicit description of the full asymptotic expansion of the Schwartz kernel of the complex powers of $m$-Laplace type operators $L$ on compact Riemannian manifolds in terms of Riesz distributions. The constant term in this asymptotic expansion turns turns out to be given by the local zeta function of $L$. In particular, the constant term in the asymptotic expansion of the Green's function $L^{-1}$ is often called the mass of $L$, which (in case that $L$ is the Yamabe operator) is an important invariant, namely a positive multiple of the ADM mass of a certain asymptotically flat manifold constructed out of the given data. We show that for general conformally invariant $m$-Laplace operators $L$ (including the GJMS operators), this mass is a conformal invariant in the case that the dimension of $M$ is odd and that $\ker L = 0$, and we give a precise description of the failure of the conformal invariance in the case that these conditions are not satisfied.
\end{abstract}

\section{Introduction}

Let $(M, g)$ be a compact Riemannian manifold of dimension $n$ and let $L$ be a self-adjoint $m$-Laplace type operator, acting on sections of a metric vector bundle $\V$ over $M$. By this we mean that $L$ is a differential operator of order $2m$ such that the principal symbol of $L$ equals the principal symbol of $(\nabla^*\nabla)^m$ for some (hence any) connection on $M$. Examples of higher order operators which are of this type are the GJMS operators, which play a prominent role in conformal geometry (see Example~\ref{ExampleGJMS} below and the references given there).

For $s \in \C$, denote by $L^{-s}$ the complex powers of $L$, defined via functional calculus. If $\ker L \neq 0$, then we take these powers on the orthogonal complement of its kernel. We show in the case that $\frac{n}{2} - ms \notin \Z$, the Schwarz kernel of $L^{-s}$ has an asymptotic expansion near the diagonal in $\mathscr{D}^\prime(M \times M)$ of the form
\begin{equation} \label{ExpansionIntroduction}
  L^{-s}(x, y) ~\sim~ \sum_{j=0}^\infty \Phi_j^L(x, y) \,\binom{s-1+\frac{j}{m}}{\frac{j}{m}} I_{2ms+2j}(x, y),
\end{equation}
where $\Phi_j^L(x, y)$ are the heat kernel coefficients (coming from the Minakshisundaram-Pleijel short-time asymptotic expansion of the heat kernel) and the $I_\alpha$ are Riesz distributions, a certain one-parameter family of distributions depending meromorphically on $\alpha \in \C$. Unfortunately, the family $I_\alpha$ has simple poles at the values $\alpha = n +2k$ for $k \in \N_0$, so that the expansion \eqref{ExpansionIntroduction} does not make sense in the case that $\frac{n}{2} -ms \in \Z$. However, it turns out that in this case, we obtain an asymptotic expansion for $L^{-s}(x, y)$ if for each $j$ such that $2ms +2j = n+2k$, we take the finite part of $I_{\alpha}(x, y)$ at $\alpha = n+2k$ instead of $I_{2ms+2j}(x, y)$.
\medskip

The Riesz distributions $I_\alpha(x, y)$ are continuous functions near the diagonal if $\mathrm{Re}(\alpha) > n$ with $I_\alpha(x, x) = 0$. Hence we obtain that if in \eqref{ExpansionIntroduction}, we only take the sum up to $j= \lfloor \frac{n}{2} - ms \rfloor$, the difference of the two sides is a continuous function near the diagonal and can be evaluated there. We refer to this value as the {\em constant term} in the asymptotic expansion of $L^{-s}(x, y)$. It turns out that in the case that $\frac{n}{2} - ms \notin \Z$, the constant term is given by the local zeta function of $L$ at $x$,
\begin{equation} \label{ConstantTermIntroduction}
\left[ L^{-s}(x, y) -  \sum_{j=0}^{\lfloor \frac{n}{2} - ms \rfloor} \Phi_j^L(x, y) \, \binom{s-1+\frac{j}{m}}{\frac{j}{m}} I_{2ms+2j}(x, y)\right]_{y=x} =  \zeta_L(s, x)
\end{equation} 
Again, in the case that $\frac{n}{2} - ms \in \Z$, both sides have a simple pole at the relevant value, and the equality continues to hold if we take the finite part on both sides.

\medskip

For $s=1$ in the above, the constant term in the asymptotic expansion of the Green's function $L^{-1}(x, y)$ of $L$ at $x$ is called the {\em mass} of $L$ at $x$, denoted by $\mathfrak{m}(x, L)$ \cite{HermannHumbert}. By \eqref{ConstantTermIntroduction}, this constant term is a value of the local zeta function,
\begin{equation} \label{MassAndZetaIntroduction1}
  \mathfrak{m}(x, L) = \zeta_L(1, x)
\end{equation}
in the case that $n$ is odd or that $m > \frac{n}{2}$. If $n$ is even and $m \leq \frac{n}{2}$, the mass instead equals the finite part of the zeta function at $s=1$, 
\begin{equation} \label{MassAndZetaIntroduction2}
  \mathfrak{m}(x, L) = \mathrm{f.p.}_{s= 1} \zeta_L(s, x).
\end{equation}
The mass is particularly interesting for conformally covariant differential operators. One reason is that it is related to the positive mass conjecture, which is still unsolved to this day, to the author's knowledge (for a more detailed exposition, see Section~\ref{SectionConformalGeometry} below). 
Formulas \eqref{MassAndZetaIntroduction1} respectively \eqref{MassAndZetaIntroduction2} recognize $\mathfrak{m}(x, L_g)$ as a local zeta value and therefore potentially allow the use of techniques from spectral geometry to investigate its properties, as well as the theory of heat kernels, via the Mellin transform formula for the zeta function. 

\medskip

The main result of this article, however, is for conformally covariant differential operators, the mass is a pointwise conformal invariant in odd dimensions. Namely, if $L_g$ is a self-adjoint, conformally invariant $m$-Laplace type operator (which is made precise in Section~\ref{SectionConformalGeometry} below and includes the case that $L$ is the Yamabe operator, the Paneitz-Branson operator or, more generally a GJMS operator), then if $n$ is odd or $m > \frac{n}{2}$, the mass satisfies the transformation law
 \begin{equation*}
   \mathfrak{m}(x, L_h) = e^{(2m-n)f(x)}  \mathfrak{m}(x, L_g)
  \end{equation*}
if $h = e^{2f} g$ for a smooth function $f$ and $L_g$, $L_h$ are the operators corresponding to the metrics indicated by the subscript. In particular, $\mathfrak{m}(x, L_g) V_g^{p}$, $p= \frac{n-2m}{n}$, (where $V_g$ is the Riemannian volume density corresponding to $g$) defines a density of weight  $n-2m$, which depends only on the conformal class of $g$.

\medskip

Since $\mathfrak{m}(x, L) = \zeta_L(1, x)$ in the above situation, this result fits nicely into the previously known results about conformally invariant zeta values: It is well known that in even dimensions, the value of the {\em global} zeta function at zero, $\zeta_g(0)$ is a conformal invariant (in the sense that the value only depends on the conformal structure), while in odd dimensions, its derivative $\zeta^\prime_g(0)$ is conformally invariant, provided that $\ker L = 0$ \cite{ParkerRosenberg}, \cite{BransonOrsted2}. Furthermore, again in even dimensions, the residue of the local zeta function $\zeta_g(s, x)$ at $s=1$ is a {\em pointwise} conformal invariant, which transforms as a density of weight $2m-n$ under a conformal  change \cite{ParkerRosenberg}, \cite{PongeLogarithmicSingularity}. Thm.~\ref{ThmLocalInvariants} completes this picture, by stating that in odd dimensions (where $\zeta_g(s, x)$ has no pole at $s=1$), the value of the zeta function itself is conformally covariant, with the same transformation law as the residue in even dimensions. Notice also that in even dimensions, the invariants $\zeta_g(0)$ and $\res_{s=1}\zeta_g(s, x)$ are given in terms of local data, i.e.\ curvature and the coefficients of $L$, while the odd-dimensional invariants $\zeta_g^\prime(0)$ and $\zeta_g(1, x)$ are global invariants, depending on the geometry in a non-local fashion.

\medskip

The existence of the conformally covariant section $\mathfrak{m}(1, L_g) = \zeta_g(1, x)$ is a very strange phenomenon: It is a {\em pointwise} yet {\em global} invariant. For example, if $L$ is a GJMS operator, it is zero on odd-dimensional round spheres (see Thm.~\ref{ThmPositiveMassGJMS}), but if $x$ is a point in an odd-dimensional Riemannian manifold $(M, g)$ a neighborhood of which is {\em isometric} to a region in the round sphere $S^n$, the mass $\mathfrak{m}(x, L_g)$ still need not to be zero. And in fact, it often is not, as suitable versions of the positive mass theorem imply.

\medskip

In Section~\ref{SectionRiesz} and \ref{SectionHeatEquation}, we discuss the Riesz distributions $I_\alpha$ as well as the heat kernel of $\Delta^m$ in Euclidean space. In each case, we discuss how to transplant the relevant distributions to the manifolds. In Section~\ref{SectionMLaplaceZeta}, we introduce the relevant concepts of global analysis, to set notation: General $m$-Laplace type operators $L$ on compact Riemannian manifolds, as well as their complex powers, the Minakshisundaram-Pleijel asymptotic expansion of their heat kernel and the corresponding local zeta function. In Section~\ref{SectionExpansion}, we then show that the Schwartz kernel $L^{-s}$ has asymptotic expansions of the claimed form (see Thm.~\ref{ThmAsymptoticExpansion} and Thm.~\ref{ThmConstantTerm}). Then in Section~\ref{SectionConformalGeometry}, we turn to conformally covariant $m$-Laplace type operators and give a further discussion of Thm.~\ref{ThmLocalInvariants}, the positive mass conjecture and related questions. Finally, in Section~\ref{SectionProof}, we give a proof of the main theorem, Thm.~\ref{ThmLocalInvariants} by calculating the variation of $\zeta_g(1, x)$ under a conformal change (see Thm.~\ref{ThmMassInvariance}), which also yields information on the case that $n$ is even or that $\ker L \neq 0$, which shows that indeed, we do not obtain a conformal invariant in this case.

\medskip

\textbf{Acknowledgements.} I thank Bernd Ammann, Christian B\"ar, Andreas Hermann, and Andreas Juhl for helpful discussion, as well as Potsdam Graduate School, SFB 647 and the Max Planck Institute for Mathematics in Bonn for financial support.

\section{Riesz Distributions} \label{SectionRiesz}

The material on Riesz distributions reviewed in this section is classical and well-known, and we repeat it for convenience of the reader and to set notation. For proofs, refer to \cite[Ch.~I.1]{Landkof} or \cite[Ch.~V.1]{EliasStein}.

Let $V$ be an $n$-dimensional Euclidean vector space, $n\geq 2$. For $\alpha \in \C$ with $\mathrm{Re}(\alpha)>0$ and $\alpha \neq n+2m$, $m \in \N_0$, the {\em Riesz Potentials} are defined by
\begin{equation*}
  I_\alpha(v) := C(\alpha, n) |v|^{\alpha-n},~~~~~\text{where}~~~C(\alpha, n) := \frac{\Gamma\left(\frac{n-\alpha}{2}\right)}{2^\alpha \pi^{\frac{n}{2}} \Gamma\left(\frac{\alpha}{2}\right)}.
\end{equation*}
Note that the coefficient function $C(\alpha, n)$, viewed as a meromorphic function in $\alpha$, has simple poles at the numbers $\alpha = n+2m$, $m\in \N_0$, so that $I_\alpha$ also has simple poles there. Since for any $\alpha$ as above, the $I_\alpha$ are locally integrable functions, they can be considered as distributions on $V$, by setting
\begin{equation*}
  I_\alpha[\varphi] := \int_V I_\alpha(v) \varphi(v) \dd v
\end{equation*}
for test functions $\varphi \in \mathscr{D}(V)$.
 As such, they satisfy the recursion relation
\begin{equation} \label{RecursionRiesz}
  \Delta I_\alpha = I_{\alpha-2}
\end{equation}
whenever $\mathrm{Re}(\alpha)>2$, $\alpha \notin n+2\N_0$. This is easiest to verify by Fourier transform, once one calculates that for $\alpha < n$, we have $\mathscr{F}[{I}_\alpha](\xi) = |\xi|^{-\alpha}$ (here we used the convention for Fourier transform as e.g.\ in \cite{Shubin}). In fact, \eqref{RecursionRiesz}, which is valid only for $\alpha$ not equal to one of the singularities $n, n+2, \dots$, has the more general form
\begin{equation} \label{RecursionRiesz2}
  \Delta \mathrm{f.p.}I_\alpha = \mathrm{f.p.} I_{\alpha-2},
\end{equation}
where $\mathrm{f.p.}I_\alpha$ denotes the finite part of $I_\alpha$ at $\alpha$, i.e.\ the constant term in the Laurent expansion around the point $\alpha$ (which is just equal to $I_\alpha$ in case that $\alpha$ is not a pole).  Formula \eqref{RecursionRiesz2} is then valid for all $\alpha$ with $\mathrm{Re}(\alpha)>2$.

The relations \eqref{RecursionRiesz} respectively \eqref{RecursionRiesz2} allows to extend $I_\alpha$ to all of $\C$ as a meromorphic family of distributions (meaning that for any $\varphi \in \mathscr{D}(V)$, $I_\alpha[\varphi]$ is a meromorphic function defined on all of $\C$), by defining for parameters $\alpha$ with $\mathrm{Re}(\alpha) \leq 0$ and test functions $\varphi \in \mathscr{D}(V)$
\begin{equation} \label{ContinuationRiesz}
  I_\alpha[\varphi] := I_{\alpha+2k}[\Delta^k\varphi],
\end{equation}
where $k \in \N$ is any number such that $\mathrm{Re}(\alpha) +2k > 0$ and such that $\alpha + 2k \neq n+2m$ for some $m \in \N_0$. We will call these distributions {\em Riesz distributions}. Because of the recursion formula \eqref{RecursionRiesz}, this does not depend on the choice of $k$. We then have
\begin{equation} \label{RieszZero}
  I_0 = \delta_0,
\end{equation}
the delta distribution at zero, as again is easy to see from the Fourier transform. 

\begin{remark} \label{RemarkPolesOfI}
  Using the residue formulas for the Gamma function, one finds the residue of $I_{\alpha}$ at $\alpha = n + 2k$, $k \in \N_0$, to be
  \begin{equation} \label{PolesOfI}
    \mathrm{res}_{\alpha = n+2k} I_\alpha = \frac{(-1)^k}{k!2^{2k-1} (4\pi)^{\frac{n}{2}} \Gamma\left(\frac{n+2k}{2}\right)} |v|^{2k}.
  \end{equation}
  Note that this is a smooth function since one takes an even power of $|v|$.
\end{remark}

\begin{remark} \label{RemarkFinitePartOfI}
  The finite parts of $I_\alpha$ at the values $\alpha = n+2k$ involve logarithms. For example, we have
  \begin{equation} \label{FinitePartAtN}
    \mathrm{f.p.}_{\alpha = n} I_\alpha(v) = \frac{\psi\left(\frac{n}{2}\right) - \gamma +2 \log(2) - 2 \log(|v|)}{(4\pi)^{\frac{n}{2}} \Gamma\left(\frac{n}{2}\right)},
  \end{equation}
  where $\gamma$ is the Euler-Mascheroni constant and $\psi = {\Gamma^\prime}/{\Gamma}$ is the digamma function. 
\end{remark}

We now define the Riesz distributions for a compact Riemannian manifold $M$. These will be distributions on $M \bowtie M$, the set of points $(x, y) \in M \times M$ such that there exists a unique minimizing geodesic parametrized by $[0, 1]$ connecting $x$ to $y$. This is an open and dense set of full measure in $M \times M$. Let $U$ be the open set of vectors $(x, v) \in TM$ such that $(x, \exp_x(t v)) \in M \bowtie M$ for each $t \in [0, 1]$. Then the Riemannian exponential map $\exp: U \longrightarrow M \bowtie M$ is a diffeomorphism.

In each fiber $T_x M$ of the tangent bundle, we have the Riesz distributions introduced above. They fit together to give distributions $I^{TM}_\alpha$ on $TM$, which can be restricted to the open set $U$ to give distributions $I^U_\alpha \in \mathscr{D}^\prime(U)$. Now we define the Riesz distributions ${I}_\alpha = I^M_\alpha \in \mathscr{D}^\prime(M \bowtie M)$ by
\begin{equation} \label{DefinitionPushDownToM}
  {I}_\alpha[\varphi] := {I}^{U}_\alpha[ j \cdot \exp^*\varphi],
\end{equation}
for test functions $\varphi \in \mathscr{D}(M \bowtie M)$, where for $(x, v) \in U$, 
\begin{equation*}
j(x, v) = \det \bigl(d \exp_x|_v\bigr)
\end{equation*}
denotes the Jacobian determinant of the exponential map. As before, if $\mathrm{Re}(\alpha)>0$ and $\alpha \neq n+2k$, $k \in \N_0$,  $I_\alpha$ is a locally integrable function on $M \bowtie M$ and the Jacobian factor in \eqref{DefinitionPushDownToM} ensures that in this case,
\begin{equation}
  I_\alpha(x, y) = C(\alpha, n) \,d(x, y)^{\alpha-n},
\end{equation}
where $d(x, y)$ is the Riemannian distance function. 

\section{The $m$-Heat Equation} \label{SectionHeatEquation}

Again, let $V$ be an $n$-dimensional Euclidean vector space. For $m \in \N$, consider the $m$-heat equation 
\begin{equation*}
  \Bigl(\frac{\partial}{\partial t} + \Delta^m\Bigr) u(t, v) = 0,
\end{equation*}
where $\Delta$ is the Laplace operator on $V$. The corresponding fundamental solution $\e_t^m$ (defined as the Schwartz kernel of the heat semigroup $e^{-t\Delta^m}$) can be easily found using the Fourier transform, here denoted by $\mathscr{F}$; it is given by
\begin{equation} \label{FormulaEm}
  \e_t^m(v) = \mathscr{F}^{-1}[e^{-t|\xi|^{2m}}](v) =  (2 \pi)^{-n} \int_V e^{i\langle v, \xi\rangle - t |\xi|^{2m}} \dd \xi.
\end{equation}
In the same way as in Section~\ref{SectionRiesz} for the Riesz distributions, we define $\e_t^m(x, y)$ on the set $M \bowtie M$ of a Riemannian manifold $M$, which again makes sense since $\e_t^m(v)$ is spherically symmetric. In this case, $\e_t^m$ is a smooth function on $M \bowtie M$; we call $\e_t^m$ the {\em Euclidean $m$-heat kernel}.

If $m=1$, then the Fourier transform can easily be computed to be given by 
\begin{equation*}
\e_t^1(v) = (4\pi t)^{-\frac{n}{2}} e^{-\frac{|v|^2}{4t}}.
\end{equation*}
For larger $m$, no elementary formula is available. The value of $\e_t^m(v)$ at zero however, can be computed as follows.

\begin{lemma} \label{LemmaEmValueAtZero}
  For any $t>0$ and all $v \in V$, we have
  \begin{equation*}
    \e_t^m(v) \leq \e_t^m(0) = \frac{\Gamma\left(\frac{n}{2m}\right)}{m \,(4\pi)^{\frac{n}{2}} \Gamma\left(\frac{n}{2}\right)} t^{-\frac{n}{2m}}.
  \end{equation*}
\end{lemma}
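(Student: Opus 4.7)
The plan is to work directly from the Fourier integral representation \eqref{FormulaEm}. For the inequality $\e_t^m(v) \leq \e_t^m(0)$, I would simply note that $e^{-t|\xi|^{2m}}$ is a non-negative, real-valued, radial Schwartz function. Since $\e_t^m$ is thus a real-valued function (its Fourier transform is even) one has the crude bound
\begin{equation*}
  \e_t^m(v) \leq |\e_t^m(v)| \leq (2\pi)^{-n}\int_V \bigl|e^{i\langle v,\xi\rangle}\bigr|\, e^{-t|\xi|^{2m}}\dd\xi = (2\pi)^{-n}\int_V e^{-t|\xi|^{2m}}\dd\xi = \e_t^m(0),
\end{equation*}
where the last equality uses that the exponential $e^{i\langle 0,\xi\rangle}=1$.

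For the explicit value, I would evaluate the integral at $v=0$ using spherical coordinates, pulling out the volume of the unit sphere $\vol(S^{n-1}) = 2\pi^{n/2}/\Gamma(n/2)$:
\begin{equation*}
  \e_t^m(0) = (2\pi)^{-n}\,\frac{2\pi^{n/2}}{\Gamma(n/2)}\int_0^\infty e^{-tr^{2m}} r^{n-1}\dd r.
\end{equation*}
Then I would substitute $u = tr^{2m}$, so that $r = (u/t)^{1/(2m)}$ and $\dd r = \frac{1}{2m}\, t^{-1/(2m)}u^{1/(2m)-1}\dd u$, which turns the one-dimensional integral into $\frac{1}{2m}t^{-n/(2m)}\Gamma\bigl(\tfrac{n}{2m}\bigr)$. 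Collecting factors, I use $(2\pi)^{-n}\cdot 2\pi^{n/2} = 2^{1-n}\pi^{-n/2} = 2/(4\pi)^{n/2}$, which combines with the factor $\frac{1}{2m}$ to give the stated constant.

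There is no real obstacle here: both parts are routine manipulations of absolutely convergent integrals, and no interchange of limits or distributional subtleties need to be addressed since $e^{-t|\xi|^{2m}}$ is a Schwartz function for each $t>0$. The only care needed is the bookkeeping in converting $2\pi^{n/2}/(2\pi)^n$ into $(4\pi)^{-n/2}$, which is exactly the identity $(4\pi)^{n/2}=2^n\pi^{n/2}$.
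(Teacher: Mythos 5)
Your proof is correct and follows essentially the same approach as the paper's: bound $|\e_t^m(v)|$ by the $L^1$-norm of the Fourier integrand, then compute $\e_t^m(0)$ in spherical coordinates with the substitution $u = tr^{2m}$. (Your extra remark that $\e_t^m$ is real-valued, so that $\e_t^m(v)\leq|\e_t^m(v)|$, is a small but welcome addition the paper leaves implicit.)
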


\begin{proof}
Clearly
\begin{equation*}
  |\e_t^m(v)| \leq (2 \pi)^{-n} \int_V \bigl|e^{i\<v, \xi\> - t|\xi|^{2m}}\bigr| \dd \xi = (2 \pi)^{-n} \int_V e^{- t|\xi|^{2m}} \dd \xi = \e_t^m(0).
\end{equation*}
  Using the spherical symmetry of $\e_t^m(v)$, we obtain
  \begin{equation*}
    \e_t^v(0) = (2\pi)^{-n}\mathrm{vol}(S^{n-1}) \int_0^\infty e^{-t r^{2m}} r^{n-1} \dd r.
  \end{equation*}
  Substituting $u = t r^{2m}$ gives 
  \begin{equation*}
    \e_t^v(0) = t^{-\frac{n}{2m}} \frac{\mathrm{vol}(S^{n-1}) }{2m(2\pi)^n} \int_0^\infty e^{-u} u^{\frac{n}{2m}-1} \dd u.
  \end{equation*}
  Using the well-known formula $\mathrm{vol}(S^{n-1}) = 2 \pi^{\frac{n}{2}} / \Gamma\left(\frac{n}{2}\right)$ and the definition of the gamma function, we obtain the result.
\end{proof}

The following proposition connects $\e_t^m$ to the Riesz distributions defined above and is essential for what follows.

\begin{proposition} \label{PropositionIntegralE}
For $m \in \N$ and $\alpha \in \C$ with $\mathrm{Re}(\alpha)>0$, define
\begin{equation*}
  E_{m, \alpha}[\varphi] := \frac{1}{\Gamma(\alpha)} \int_0^1 (\e_t^m, \varphi)_{L^2} t^{\alpha-1} \dd t, ~~~~~~~ \varphi \in \mathscr{D}(V).
\end{equation*}
\begin{enumerate}[{\normalfont (1)}]
\item The function $\alpha \mapsto E_{m, \alpha}$ extends uniquely to an entire holomorphic function with values in $\mathscr{D}^\prime(V)$. 
\item The difference
\begin{equation*}
  \Psi_{m, \alpha} := E_{m, \alpha} - I_{2m\alpha}
\end{equation*}
is smooth for every $\alpha \in \C$, $\alpha \neq n +2k$, and $\mathrm{f.p}_{\alpha = n +2k}  \Psi_{m, \alpha}$ is smooth as well.
\item The function $\alpha \mapsto \Psi_{m, \alpha}$ is meromorphic on all of $\C$ as a family of distributions with values in $\mathscr{E}(V) = C^\infty(V)$, meaning that for every compactly supported distribution $\varphi \in \mathscr{E}^\prime(V)$, the complex-valued function $\alpha \mapsto \Psi_{m, \alpha}[\varphi]$ is meromorphic in the usual sense.
\item We have
\begin{equation} \label{FormulaFpQ}
  (\Psi_{m, \alpha})(0) = ( \Psi_{m, \alpha})[\delta_0] = \frac{\Gamma\left(\frac{n}{2m}\right)}{ (4\pi)^{n/2} \Gamma\left(\frac{n}{2}\right)\Gamma(\alpha)} \frac{1}{m\alpha - \frac{n}{2}}.
\end{equation}
as an equality of meromorphic functions.
\end{enumerate}
\end{proposition}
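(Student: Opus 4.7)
The overall strategy rests on the Mellin-type identity
\begin{equation*}
\frac{1}{\Gamma(\alpha)}\int_0^\infty \e_t^m(v)\, t^{\alpha-1}\,dt \;=\; I_{2m\alpha}(v), \qquad 0 < 2m\,\mathrm{Re}(\alpha) < n,
\end{equation*}
which I would derive by Fubini in the strip together with the elementary Mellin integral $\int_0^\infty e^{-t|\xi|^{2m}}t^{\alpha-1}dt = \Gamma(\alpha)|\xi|^{-2m\alpha}$ and the inverse Fourier identity $I_\beta = \mathscr{F}^{-1}[|\xi|^{-\beta}]$ recalled in Section~\ref{SectionRiesz}. Splitting the integral at $t=1$ then gives, in the same strip,
\begin{equation*}
\Psi_{m,\alpha}(v) \;=\; -\frac{1}{\Gamma(\alpha)}\int_1^\infty \e_t^m(v)\, t^{\alpha-1}\,dt,
\end{equation*}
which will be the working formula for parts (2)--(4).

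For (1) I would use the spherical symmetry of $\e_t^m$ to rewrite $(\e_t^m,\varphi)_{L^2} = (e^{-t\Delta^m}\varphi)(0)$, then apply $\partial_t \e_t^m = -\Delta^m \e_t^m$ inductively to show this is $C^\infty$ down to $t=0$ with Taylor expansion
\begin{equation*}
(\e_t^m,\varphi)_{L^2} \;=\; \sum_{k=0}^N \frac{(-t)^k}{k!}\bigl(\Delta^{mk}\varphi\bigr)(0) + O(t^{N+1}).
\end{equation*}
Subtracting these terms and integrating on $[0,1]$ produces a meromorphic continuation of $\Gamma(\alpha)E_{m,\alpha}[\varphi]$ whose only poles are simple poles at $\alpha = 0,-1,-2,\dots$ with residues $(-1)^k(\Delta^{mk}\varphi)(0)/k!$. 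Since these are exactly the poles of $\Gamma(\alpha)$, dividing by $\Gamma(\alpha)$ cancels them, leaving an entire distribution-valued function.

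For (2) and (3), smoothness in $v$ is inherited by the working formula from the smoothness of $\e_t^m$, so the task reduces to a meromorphic continuation in $\alpha$ with $C^\infty$ dependence on $v$. The scaling $\e_t^m(v) = t^{-n/(2m)}\e_1^m(t^{-1/(2m)}v)$ together with the Taylor expansion of the Schwartz function $\e_1^m$ at the origin (whose odd-order terms vanish by spherical symmetry) produces the large-$t$ expansion
\begin{equation*}
\e_t^m(v) \;=\; \sum_{j=0}^N t^{-(n+2j)/(2m)} p_{2j}(v) + R_N(t,v),
\end{equation*}
with $p_{2j}$ a homogeneous polynomial of degree $2j$ and $R_N$ (together with its $v$-derivatives) uniformly $O(t^{-(n+2N+2)/(2m)})$ on compacts. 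The subtracted terms integrate to $-p_{2j}(v)/(\alpha - (n+2j)/(2m))$, producing the asserted simple poles of $\Psi_{m,\alpha}$ at $2m\alpha = n+2j$, while the remainder gives a holomorphic contribution on the half-plane $\mathrm{Re}(\alpha) < (n+2N+2)/(2m)$, and taking $N\to\infty$ covers all of $\C$.

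Finally for (4), I would substitute $v=0$ into the working formula, evaluate $\e_t^m(0)$ via Lemma~\ref{LemmaEmValueAtZero}, and perform the resulting elementary Mellin integral; this yields \eqref{FormulaFpQ} after rewriting $\alpha - n/(2m) = (m\alpha - n/2)/m$, and analytic continuation from the strip extends the identity to all $\alpha$ as an equality of meromorphic functions. I expect the main technical obstacle to be the uniform-in-$v$ control of the Taylor remainder $R_N$ in step (3): the estimate itself is routine given $\e_1^m \in \mathcal{S}(V)$, but must be carried out carefully enough (on every compact set and for every order of $v$-derivative) to guarantee that the continuation of $\Psi_{m,\alpha}$ really lands in $C^\infty(V)$ rather than in a weaker space.
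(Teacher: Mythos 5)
Your proof is correct, and for parts~(2) and~(3) it takes a genuinely different route from the paper's. The paper works in Fourier space: in the strip $0 < 2m\,\mathrm{Re}(\alpha) < n$ it writes $\Psi_{m,\alpha} = -\Gamma(\alpha)^{-1}\mathscr{F}^{-1}\bigl[\Gamma(\alpha,|\xi|^{2m})\,|\xi|^{-2m\alpha}\bigr]$ using the incomplete gamma function, then extends the strip by a convolution trick with a cutoff $\chi$ (splitting $|\xi|^{-2m\alpha}$ via $\Delta^k(I_{2k}*\cdot)$) and pushes to $\mathrm{Re}(\alpha)\leq 0$ with the recursion $\Psi_{m,\alpha-1} = \Delta^m\,\mathrm{f.p.}\Psi_{m,\alpha} + \Gamma(\alpha)^{-1}\e_1^m$. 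You instead work in physical space from the tail representation $\Psi_{m,\alpha}(v) = -\Gamma(\alpha)^{-1}\int_1^\infty \e_t^m(v)\,t^{\alpha-1}\,\dd t$, using the exact self-similarity $\e_t^m(v) = t^{-n/(2m)}\e_1^m(t^{-1/(2m)}v)$ together with the Taylor expansion of the Schwartz function $\e_1^m$ at the origin. Your route exposes the poles (at $\alpha = (n+2j)/(2m)$, $j\geq 0$) and their residues ($-p_{2j}(v)/\Gamma(\alpha_0)$, a polynomial) more transparently, and makes the smoothness of the finite part immediate; the paper's argument is heavier but delivers the distributional/convolution machinery it reuses when passing from $V$ to $M \bowtie M$. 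Your argument for~(1), via the Taylor expansion of $(\e_t^m,\varphi)_{L^2} = (e^{-t\Delta^m}\varphi)(0)$ in $t$ with pole cancellation against $1/\Gamma(\alpha)$, is a cousin of the paper's integration-by-parts recursion \eqref{RecurrenceEalpha}; and your part~(4) is essentially the paper's step 4. One small caution for a written version: the pointwise Mellin identity $\Gamma(\alpha)^{-1}\int_0^\infty \e_t^m(v)t^{\alpha-1}\dd t = I_{2m\alpha}(v)$ is not a naive Fubini (the absolute integral diverges in $\xi$ at infinity); one should either derive it distributionally against a test function as in the paper and then upgrade to a pointwise statement on $V\setminus\{0\}$ by continuity, or use the scaling substitution directly. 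You have correctly anticipated that the uniform-in-$v$ control of $R_N(t,v)$ and its $v$-derivatives is the technical crux; the Schwartz bounds on $D^\gamma\e_1^m$ do give $|D_v^\gamma R_N(t,v)| \lesssim t^{-(n+2N+2)/(2m)}$ on compacts for $|\gamma|\leq 2N+2$, and for $|\gamma| > 2N+2$ one simply has $D^\gamma R_N = D^\gamma\e_1^m$, which gives $O(t^{-(n+|\gamma|)/(2m)})$ and is even better, so the continuation really does land in $C^\infty(V)$.
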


\begin{remark} \label{RemarkPolesOfPsi}
  Clearly, since $E_{m, \alpha}$ is holomorphic, $\Psi_{m, \alpha}$ has the same poles as $-I_{2m\alpha}$, where the poles of the latter are given in \eqref{PolesOfI}. Notice that by Remark~\eqref{RemarkPolesOfI}, the residues of $I_{2m\alpha}$ at $\alpha = n+2k$, $k \in \N_0$, are multiples of $|v|^{2k}$, hence they vanish when evaluated at zero if $k \geq 1$. This explains why $(\Psi_{m, \alpha})(0)$ has no poles at $n+2k$, $k \geq 1$, even though $\Psi_{m, \alpha}$ has.
\end{remark}

\begin{proof}
{\itshape Step 1.} We show that $E_{m, \alpha}$ extends to an entire family of distributions. To this end, we first verify that $E_{m, \alpha}$ is a well-defined distribution for each $\alpha$ with $\mathrm{Re}(\alpha)>0$. To this end, calculate for $\varphi \in \mathscr{D}(V)$
\begin{equation*}
  \bigl|(\e_t^m, \varphi)_{L^2}\bigr| =  \left| \int_V \mathscr{F}^{-1}[{\varphi}](\xi) e^{-t|\xi|^{2m}} \dd \xi \right| \leq \int_V \bigl|\mathscr{F}^{-1}[{\varphi}](\xi)\bigr| \dd \xi =  \bigl\|\mathscr{F}^{-1}[{\varphi}]\bigr\|_{L^1}.
\end{equation*}
Hence
\begin{equation*}
  \bigl| E_{m, \alpha}[\varphi]\bigr| \leq \int_0^1 \bigl|(\e_t^m, \varphi)_{L^2}\bigr| t^{\alpha-1} \dd t \leq  \bigl\|\mathscr{F}^{-1}[{\varphi}]\bigr\|_{L^1} \frac{1}{\alpha},
\end{equation*}
which shows that the $E_{m, \alpha}$ are well-defined distributions for $\mathrm{Re}(\alpha)>0$. To see that $E_{m, \alpha}$ extends meromorphically to all of $\C$, notice that for $\mathrm{Re}(\alpha) >1$, we have
\begin{equation*}
  E_{m, \alpha} [ \Delta^m \varphi] = - \frac{1}{\Gamma(\alpha)} \int_0^1 \bigl( {(\e_t^m)}^\prime, \varphi\bigr)_{L^2} t^{\alpha-1} \dd t = - \frac{1}{\Gamma(\alpha)} (\e_1^m, \varphi)_{L^2} + E_{m, \alpha-1}.
\end{equation*}
Hence the distributions $E_{m, \alpha}$ satisfy the recurrence relation
\begin{equation} \label{RecurrenceEalpha}
  E_{m, \alpha-1} = \Delta^m E_{m, \alpha} + \frac{1}{\Gamma(\alpha)} \e_1^m, 
\end{equation}
which can be used to holomorphically extend $E_{m, \alpha}$ to all of $\C$, as done with the Riesz distributions above.

{\itshape Step 2.} We show (2) and (3) for $0 < 2m \mathrm{Re}(\alpha) < n$. Then for $\varphi \in \mathscr{D}(V)$,
\begin{equation*}
\begin{aligned}
  \Gamma(\alpha) E_{m, \alpha}[\varphi] &= \int_0^1 \bigl(\mathscr{F}^{-1}[e^{-t|\xi|^{2m}}], \varphi\bigr)_{L^2}  t^{\alpha-1}\dd t\\
  &= \int_0^1 \bigl(e^{-t|\xi|^{2m}}, \mathscr{F}^{-1}[\varphi]\bigr)_{L^2}  t^{\alpha-1}\dd t\\
 &= \int_V \left(\int_0^1 e^{-t|\xi|^{2m}} t^{\alpha-1} \dd t\right) \mathscr{F}^{-1}[\varphi](\xi) \dd \xi\\
 &= \int_V \left(\int_0^{|\xi|^{2m}} e^{-u} u^{\alpha-1} \dd t\right) |\xi|^{-2m\alpha}\mathscr{F}^{-1}[\varphi](\xi) \dd \xi\\
 &= \int_V \bigl( \Gamma(\alpha) - \Gamma(\alpha, |\xi|^{2m})\bigr) |\xi|^{-2m\alpha} \mathscr{F}^{-1}[\varphi](\xi) \dd \xi\\
 &= \Gamma(\alpha) I_{2m\alpha}[\varphi] - \int_V \Gamma(\alpha, |\xi|^{2m}) |\xi|^{-2m\alpha} \mathscr{F}^{-1}[\varphi](\xi) \dd \xi,
\end{aligned}
\end{equation*}
where we used that $\mathscr{F}^{-1}[|\xi|^{-2m\alpha}] = I_{2m\alpha}$ and the definition
\begin{equation*}
  \Gamma(\alpha, x) := \int_x^\infty e^{-u} u^{\alpha-1} \dd u
\end{equation*}
of the incomplete gamma function. Note that the use of Fubini in the third step is justified because by the assumption on $\alpha$, the singularity of the integrand at $\xi=0$ is locally integrable. 

If $\chi \in \mathscr{D}(V)$ is a cutoff function with $\chi \equiv 1$ on $B_1(0)$ and $\chi \equiv 0$ on $V\setminus B_2(0)$, we have 
\begin{equation*}
 \Gamma(\alpha, |\xi|^{2m}) |\xi|^{-2m\alpha} = \chi(\xi) \Gamma(\alpha, |\xi|^{2m}) |\xi|^{-2m\alpha} + \bigl(1- \chi(\xi)\bigr) \Gamma(\alpha, |\xi|^{2m}) |\xi|^{-2m\alpha}
\end{equation*}
where the first summand is in $L^1(V) \cap \mathscr{E}^\prime(V)$ and the second summand is in $\mathscr{S}(V)$, the Schwartz space, because $\Gamma(\alpha, |\xi|^{2m})$ decays exponentially at infinity together with its derivatives (provided $\mathrm{Re}(\alpha) > 0$). Hence the Fourier transform
\begin{equation*}
  \Psi_{m, \alpha} = - \frac{1}{\Gamma(\alpha)} \mathscr{F}^{-1}\Bigl[\Gamma(\alpha, |\xi|^{2m}) |\xi|^{-2m\alpha}\Bigr]
\end{equation*}
is smooth. It is furthermore clear that $\Psi_{m, \alpha}$ is holomorphic in $\alpha$ as an $\mathscr{E}(V)$-valued function, as one can exchange differentiation and integration freely for $0 < 2m\mathrm{Re}(\alpha) < n$.

{\em Step 3.} We now extend these results to $\alpha \in \C$ with $\mathrm{Re}(\alpha)>0$ arbitrary. By the properties of the Riesz distributions, we have $\Delta^k (I_{2k} * \varphi) \equiv \varphi$ for any $\varphi \in \mathscr{D}(V)$, where $*$ denotes convolution. Let $\chi$ be the cutoff function from before and calculate
\begin{equation*}
\begin{aligned}
   \mathscr{F}^{-1}[\varphi] &= \mathscr{F}^{-1}\bigl[ \Delta^k\{I_{2k} * \varphi\}\bigr] \\
   &= \mathscr{F}^{-1}\bigl[ \Delta^k\bigl\{(\chi I_{2k}) * \varphi\bigr\}\bigr] + \mathscr{F}^{-1}\bigl[\Delta^k\bigl\{\bigl((1-\chi)I_{2k}\bigr)*\varphi\bigr\}\bigr]\\
   &= |\xi|^{2k}\mathscr{F}^{-1}\bigl[(\chi I_{2k}) *\varphi\bigr] - \mathscr{F}^{-1} \bigl[ \bigl( [\Delta^k, \chi] I_{2k} \bigr) * \varphi \bigr] + \mathscr{F}^{-1} \bigl[ \bigl( (1-\chi) (\Delta^k I_{2k})\bigr) * \varphi\bigr].
\end{aligned}
\end{equation*}
Notice that $\Delta^k I_{2k} = \delta_0$, which has support in zero, and since $\chi$ is supported on $V \setminus B_1(0)$, we have $(1-\chi)(\Delta^k I_{2k}) \equiv 0$, i.e.\ the third term is zero. Set
\begin{equation*}
  \Psi^{(k)}_{m, \alpha} := - \frac{1}{\Gamma(\alpha)} \mathscr{F}^{-1}\Bigl[\Gamma(\alpha, |\xi|^{2m}) |\xi|^{2k-2m\alpha}\Bigr].
\end{equation*}
In particular $\Psi^{(0)}_{m, k} = \Psi_{m, \alpha}$. Then for $\alpha \in \C$ with $0 < 2m\mathrm{Re}(\alpha) < n$, we obtain by the calculations from step 2 combined with the formula for $\mathscr{F}^{-1}[\varphi]$ that
\begin{equation} \label{SecondFormulaForPhi}
  \Psi_{m, \alpha} [\varphi] = \Psi_{m, \alpha}^{(k)} \bigl[(\chi I_{2k}) * \varphi \bigr] - \Psi_{m, \alpha}\bigl[ \bigl( [\Delta^k, \chi] I_{2k} \bigr) * \varphi\bigr]
\end{equation}
for all $\varphi \in \mathscr{D}(V)$. Since $\Psi_{m, \alpha}^{(k)}$ defines a holomorphic function on $0 < 2m\mathrm{Re}(\alpha) < n+2k$ with values in $\mathscr{E}(V) \subset \mathscr{D}^\prime(V)$, the formula \eqref{SecondFormulaForPhi} in fact is valid on this larger strip.

Now we argue that for $0 < 2m\mathrm{Re}(\alpha) < n +2k$, the formula \eqref{SecondFormulaForPhi} also defines a continuous functional on compactly supported {\em distributions} $\varphi \in \mathscr{E}^\prime(V)$. Indeed, the convolution map $C_1$ mapping $\varphi$  to $(\chi I_{2k}) * \varphi$ is a continuous map from $\mathscr{E}^\prime(V)$ to itself, while the convolution map $C_2$ mapping $\varphi$ to $([\Delta^k, \chi] I_{2k}) * \varphi$ is a continuous map from $\mathscr{E}^\prime(V)$ into $\mathscr{D}(V)$, as $[\Delta^k, \chi] I_{2k} \in \mathscr{D}(V)$ (for details on the convolution of distributions and proofs of these facts, see \cite{treves67}, Chapter~27). Hence we can write 
\begin{equation*}
  \Psi_{m, \alpha} = \Psi^{(k)}_{m, \alpha} \circ C_1 - (E_{m, \alpha} - I_{2m\alpha}) \circ C_2.
\end{equation*}
This then defines a continuous linear functional on $\mathscr{E}^\prime(V)$ which depends meromorphically on $\alpha$ for $0 < 2m \mathrm{Re}(\alpha) < n+2k$. Because continuous linear functionals on $\mathscr{E}^\prime(V)$ are exactly the smooth functions, this shows that $\Psi_{m, \alpha}$ is a smooth function for each $\alpha$ with $\mathrm{Re}(\alpha) >0$, depending meromorphically on $\alpha$ in this domain. Finally, by the recursion formulas \eqref{RecurrenceEalpha}, \eqref{RecursionRiesz2} for $E_{m, \alpha}$ and $I_{\alpha}$, we have
 \begin{equation} \label{RecursionPsi}
   \Psi_{m, \alpha-1} = \Delta^m \mathrm{f.p.}\Psi_{m, \alpha} + \frac{1}{\Gamma(\alpha)} \e_1^m.
 \end{equation}
This allows to extend the result to $\alpha$ with $\mathrm{Re}(\alpha) \leq 0$.

{\itshape Step 4.} It remains to show \eqref{FormulaFpQ}. Suppose first that $0 < \mathrm{Re}(\alpha) < \frac{n}{2m}$. A similar calculation as in step 2 then shows that
\begin{equation*}
  \frac{1}{\Gamma(\alpha)}\int_0^\infty (\e_t^m, \varphi)_{L^2} t^{\alpha-1} \dd t = I_{2m\alpha}[\varphi]
\end{equation*}
for all $\varphi \in \mathscr{D}(V)$, where by the assumption on $\alpha$, all appearing integrals are absolutely convergent. Hence
\begin{equation*}
\Psi_{m, \alpha}[\varphi] = E_{m, \alpha}[\varphi] - I_{2m\alpha}[\varphi] = -\frac{1}{\Gamma(\alpha)}\int_1^\infty (\e_t^m, \varphi)_{L^2}t^{\alpha-1} \dd t.
\end{equation*}
Let $\delta_\varepsilon$ be any sequence in $\mathscr{D}(V)$ with $\|\delta_\varepsilon\|_{L^1} = 1$ converging to $\delta_0$ (the delta distribution at zero) in the sense of distributions. Then by dominated convergence,
\begin{equation*}
\begin{aligned}
\Gamma(\alpha)\Psi_{m, \alpha}(0) &= -\lim_{\varepsilon\rightarrow 0} \int_1^\infty (\e_t^m, \delta_\varepsilon)_{L^2}t^{\alpha-1} \dd t = -\int_1^\infty \e_t^m(0)t^{\alpha-1} \dd t \\
&=  -\frac{\Gamma\left(\frac{n}{2m}\right)}{m \,(4\pi)^{n/2} \Gamma\left(\frac{n}{2}\right)} \int_1^\infty t^{\alpha-\frac{n}{2m}-1} = \frac{\Gamma\left(\frac{n}{2m}\right)}{m \,(4\pi)^{n/2} \Gamma\left(\frac{n}{2}\right)} \frac{1}{\alpha - \frac{n}{2m}}.
\end{aligned}
\end{equation*}
Here the integral is absolutely convergent by the assumption on $\alpha$. 

If now $ \mathrm{Re}(\alpha)>\frac{n}{2m}$, the distributions $\mathrm{f.p.} I_{2m\alpha}$ are continuous and satisfy $\mathrm{f.p.}I_{2m\alpha}(0) = 0$. Therefore,
\begin{equation*}
\begin{aligned}
  \mathrm{f.p.} \Psi_{m, \alpha}(0) &= E_{m, \alpha} - \underbrace{\mathrm{f.p.}I_{2m\alpha}(0)}_{=0} = \frac{1}{\Gamma(\alpha)}\int_0^1 e_t^m(0) t^{\alpha-1} \dd t \\
  &= \frac{\Gamma\left(\frac{n}{2m}\right)}{m \,(4\pi)^{n/2} \Gamma\left(\frac{n}{2}\right)\Gamma(\alpha)} \int_0^1 t^{\alpha-\frac{n}{2m}-1},
\end{aligned}
\end{equation*}
which again converges by the assumption on $\alpha$.
\end{proof}

\section{$m$-Laplace Type Operators and their Zeta Functions} \label{SectionMLaplaceZeta}

Let $(M, g)$ be a Riemannian manifold. For $m \in \N$, an $m$-Laplace type operator is a differential operator $L$ of order $2m$ acting on sections of some metric vector bundle $\V$ over $M$ such that $L$ has the principal symbol of the $m$-th power of a Laplace type operator. Equivalently, $L$ is an $m$-Laplace type operator if and only if $L - (\nabla^*\nabla)^m$ is a differential operator of order at most $2m-1$ for some (hence any) connection $\nabla$ on $\V$.

In this section, we will introduce the complex powers, the heat kernel and the zeta function of the operator $L$ and discuss the relations between these.

\medskip

Suppose that $M$ is compact and that $L$ is formally self-adjoint with respect to some fiber metric on $\V$. Then $L$ with domain $C^\infty(M, \V)$ is an elliptic, semibounded differential operator and as such, it has a unique extension to an unbounded self-adjoint operator on $L^2(M, \V)$. Also, its spectrum consists of eigenvalues of finite multiplicities and its eigenfunctions are smooth \cite[Lemma~1.6.3]{gilkey95}. Throughout, we will denote the eigenvalues by $\lambda_1 \leq \lambda_2 \leq \dots$ (repeated with multiplicity) and let $\varphi_1, \varphi_2, \dots$ a corresponding orthonormal basis of eigenfunctions.

\medskip

Since an $m$-Laplace type operator $L$ is elliptic, we can form its complex powers $L^{-s}$. Unless $L$ is positive, one has to specify here how to deal with the negative and zero spectral part. In this paper,  we adopt the convention that by definition, $L^{-s}$ is the operator whose Schwartz kernel is
\begin{equation*}
  L^{-s}(x, y) = e^{-i \pi s}\sum_{\lambda_j < 0} |\lambda_j|^{-s}\, \varphi_j(x)\otimes \varphi_j(y)^* + \sum_{\lambda_j > 0} \lambda_j^{-s} \,\phi_j(x) \otimes \phi_j(y)^*,
\end{equation*}
with the sum converging in the sense of distributions (notice that the first sum is finite while the second one is infinite). With this convention, $L^{-1}$ is the inverse of $L$ on the orthogonal complement of $\ker L$. For any $s \in \C$, $L^{-s}$ is a classical pseudo-differential operator of order $-2ms$ (c.f.~\cite[Thm.~11.2]{Shubin}) and hence its Schwarz kernel is a smooth function away from the diagonal (this is a fundamental property of pseudo-differential operators). Near the diagonal, it is not smooth, but its singularity can be quite explicitly described in form of an asymptotic expansion, as we will see below in Section~\ref{SectionExpansion}. 

\medskip

The asymptotic expansion of $L^{-s}(x, y)$ involves the heat kernel coefficients, which we discuss now. Let $p_t^L(x, y)$ be the heat kernel of $L$, i.e.\ the Schwartz kernel of the operator $e^{-tL}$ (which can e.g.\ be defined using spectral calculus). The heat kernel is smooth in all three variables on $M \times M \times (0, \infty)$ and is given in terms of the spectrum of $L$ by the formula
\begin{equation} \label{DefinitionHeatKernel}
  p_t^L(x, y) = \Pi(x, y) + \sum_{\lambda_j <0} e^{-t\lambda_j} \varphi_j(x)\otimes \varphi_j(y)^* + \sum_{\lambda_j >0} e^{-t\lambda_j} \,\phi_j(x) \otimes \phi_j(y)^*,
\end{equation}
where $\Pi(x, y)$ is the integral kernel of the projection onto the kernel of $L$ and the series on the right hand side converges in $C^\infty((0, \infty) \times M \times M, \V \boxtimes \V^*)$. It follows e.g.\ from the construction of \cite[Ch.~1]{GreinerHeatEquation} or \cite[Ch.~1.7]{gilkey95} that near the diagonal, $p_t^L(x, y)$ of $L$ has a short-time asymptotic expansion of the form
\begin{equation} \label{HeatKernelExpansion}
 p_t^L(x, y) ~\sim~ \e_t^m(x, y) \sum_{j=0}^\infty t^{\frac{j}{m}} \frac{\Phi_j^L(x, y)}{\Gamma\left(\frac{j}{m}+1\right)},
 \end{equation}
 where $\e_t^m$ is the Euclidean $m$-heat kernel considered in Section~\ref{SectionRiesz} and the $\Phi_j^L$ are certain smooth sections of $\V \boxtimes \V^*$ defined on $M \bowtie M$, the set  of pairs $(x, y) \in M \times M$ such that there is a unique minimizing geodesic between $x$ and $y$. These "correction terms" $\Phi_j^L(x, y)$ appearing in \eqref{HeatKernelExpansion} are locally computable quantities, which depend on the geometry of $M$ in a local fashion.
 Precisely, the asymptotic relation \eqref{HeatKernelExpansion} means that the difference
 \begin{equation} \label{HeatKernelExpansionRemainder}
  r_t^N(x, y) := p_t^L(x, y)- \e_t^m(x, y) \sum_{j=0}^N t^{\frac{j}{m}} \frac{\Phi_j^L(x, y)}{\Gamma\left(\frac{j}{m}+1\right)}
\end{equation}
satisfies the estimate
\begin{equation} \label{HeatRemainderEstimate}
  \bigl| \nabla_x^k \nabla_y^l r_t^N(x, y)\bigr| \leq C t^{\frac{N - k- l +1}{m} -\frac{n}{2m}}
\end{equation}
for all $t \in [0, T]$ and $m, l \in \N_0$, where the constant $C>0$ is uniform for $(x, y)$ in compact subsets of $M \bowtie M$ (see \cite{GreinerHeatEquation}, Lemma~1.44). 

\medskip

For $x \in M$, the {\em local zeta function} $\zeta_L(s, x)$ corresponding to $L$ is defined for $s \in \C$ with $\mathrm{Re}(s) > \frac{n}{2m}$ by the formula
\begin{equation} \label{LocalZeta}
  \zeta_L(s, x) := e^{-i\pi s} \sum_{\lambda_j <0}  |\lambda_j|^{-s}\varphi_j(x) \otimes \varphi_j(x)^* + \sum_{\lambda_j >0} \lambda_j^{-s}\varphi_j(x) \otimes \varphi_j(x)^* = L^{-s}(x, x).
\end{equation}
Applying the formula
\begin{equation*}
  \lambda^{-s} = \frac{1}{\Gamma(s)} \int_0^\infty t^{s-1} e^{-\lambda t} \dd t,
\end{equation*}
to the spectral formula for the heat kernel \eqref{DefinitionHeatKernel}, one obtains the integral representation
\begin{equation} \label{MellinTransform}
  \zeta_L(s, x) = \frac{1}{\Gamma(s)} \int_0^\infty t^{s-1} p_t^+(x, x) \dd V_g(x) + L_-^{-s}(x, x)
\end{equation}
for the zeta function, where
\begin{equation} \label{NegativeSpectralPartOfL}
   L^{-s}_-(x, y) = e^{-i \pi s} \sum_{\lambda_j < 0} |\lambda_j|^{-s}\varphi_j(x) \otimes \varphi_j(y)^*
\end{equation}
is the negative spectral part of $L^{-s}$ and
\begin{equation} \label{PositiveHeatKernel}
  p_t^+(x, y) :=  \sum_{\lambda_j>0}^\infty e^{-t\lambda_j} \,\phi_j(x) \otimes \phi_j(y)^*
\end{equation}
is the positive spectral part of the heat kernel
 (clearly, in the case that $L$ is a positive operator, we have $p_t^+ = p_t^L$). While the representations \eqref{LocalZeta} and \eqref{MellinTransform} are only valid for $s \in \C$ with $\mathrm{Re}(s) > \frac{n}{2m}$ (by Thm.~\ref{ThmAsymptoticExpansion}, for such values $s$, the integral kernel $L^{-s}(x, y)$ is continuous and hence can be evaluated on the diagonal) $\zeta_L(s, x)$ can be continued to a meromorphic function on the whole complex plane.

\begin{lemma}[Analytic Continuation of Zeta Function] 
  Let $N \in \N_0$. Then for $s \in \C$ with $m\mathrm{Re}(s) > \frac{n}{2} - N - 1$, the local zeta function \eqref{LocalZeta} is given by the formula
  \begin{equation} \label{ZetaFormula}
\begin{aligned}
  \zeta_L(s, x) &= \frac{1}{\Gamma(s)}\sum_{j=0}^N \frac{\Phi_j^L(x, x)\Gamma\left(\frac{n}{2m}\right)}{(4\pi)^{\frac{n}{2}} \Gamma\left(\frac{n}{2}\right)\Gamma\left(\frac{j}{m}+1\right) \left(ms+j -\frac{n}{2}\right)} \\
  &~~~~~ + \frac{1}{\Gamma(s)} \int_0^1 t^{s-1} r_t^N(x, x) \dd t - \frac{1}{\Gamma(s)} \int_0^1 t^{s-1} p_t^{\leq 0}(x, x) \dd t \\
  &~~~~~+ \frac{1}{\Gamma(s)} \int_1^\infty t^{s-1} p_t^+(x, x) \dd t + L_-^{-s}(x, x)
  \end{aligned}
\end{equation}
where $\Phi_j^L(x, x)$ are the heat kernel coefficients from \eqref{HeatKernelExpansion},  $r^N_t(x, x)$ is the remainder term from \eqref{HeatKernelExpansionRemainder}, and $p_t^{\leq 0}(x, x) := p_t^L(x, x) - p_t^+(x, x)$ is the non-positive spectral part of the heat kernel.
\end{lemma}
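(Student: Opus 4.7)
The plan is to start from the Mellin transform representation~\eqref{MellinTransform} and carry out the standard heat kernel zeta-function surgery, splitting the $t$-integral at $t=1$ and isolating the short-time singular contribution via the asymptotic expansion~\eqref{HeatKernelExpansion}.

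First, I would work in the strip $\mathrm{Re}(s)>n/(2m)$ where~\eqref{MellinTransform} holds with absolutely convergent integrals. Write
\begin{equation*}
\frac{1}{\Gamma(s)}\int_0^\infty t^{s-1}p_t^+(x,x)\,\dd t = \frac{1}{\Gamma(s)}\int_0^1 t^{s-1} p_t^+(x,x)\,\dd t + \frac{1}{\Gamma(s)}\int_1^\infty t^{s-1}p_t^+(x,x)\,\dd t,
\end{equation*}
and in the first piece replace $p_t^+=p_t^L - p_t^{\leq 0}$. The $\int_1^\infty$ integral is entire in $s$ because $p_t^+(x,x)$ decays exponentially as $t\to\infty$ (being controlled by $e^{-t\lambda_k}$ with $\lambda_k$ the smallest strictly positive eigenvalue). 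The $\int_0^1 t^{s-1} p_t^{\leq 0}(x,x)\,\dd t$ piece is a finite sum (since $L$ is semibounded) and $p_t^{\leq 0}(x,x)$ is smooth and bounded on $[0,1]$, so it extends meromorphically to all of~$\C$.

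The substantive piece is the $\int_0^1 t^{s-1}p_t^L(x,x)\,\dd t$ term. Here I plug in the heat-kernel expansion as rearranged in~\eqref{HeatKernelExpansionRemainder}:
\begin{equation*}
p_t^L(x,x) = \e_t^m(x,x)\sum_{j=0}^N t^{j/m}\frac{\Phi_j^L(x,x)}{\Gamma(j/m+1)} + r_t^N(x,x).
\end{equation*}
On the diagonal $\e_t^m(x,x)=\e_t^m(0) = \frac{\Gamma(n/(2m))}{m(4\pi)^{n/2}\Gamma(n/2)} t^{-n/(2m)}$ by Lemma~\ref{LemmaEmValueAtZero}, so each term of the sum produces an integral $\int_0^1 t^{s-1+j/m - n/(2m)}\,\dd t = m/(ms+j-n/2)$, the $m$ in the numerator canceling the $m$ in the denominator of $\e_t^m(0)$. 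This gives exactly the explicit sum in~\eqref{ZetaFormula} (for $\mathrm{Re}(s)$ large; each term then continues meromorphically with simple poles at $s=(n-2j)/(2m)$). The remainder integral $\int_0^1 t^{s-1}r_t^N(x,x)\,\dd t$ is absolutely convergent precisely when $\mathrm{Re}(s-1)+(N+1)/m - n/(2m) > -1$, i.e.\ when $m\mathrm{Re}(s)>n/2-N-1$, by the pointwise estimate~\eqref{HeatRemainderEstimate} with $k=l=0$. Finally $L_-^{-s}(x,x)$ is a finite sum and entire.

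Combining these four pieces with the entire $\int_1^\infty$ integral and the $L_-^{-s}(x,x)$ summand gives the claimed identity in the common domain of definition $\mathrm{Re}(s)>n/(2m)$, and both sides extend meromorphically to $m\mathrm{Re}(s)>n/2-N-1$: the right-hand side manifestly so, and $\zeta_L(\cdot,x)$ by the resulting formula itself. I do not expect any genuine obstacle beyond bookkeeping; the only subtle point is to check that the interchange of the expansion, summation, and $\int_0^1$ is legitimate (which it is termwise, so one need only apply the estimate~\eqref{HeatRemainderEstimate} to the truncated remainder) and to make sure $\Gamma(n/(2m))/(m\cdots)$ combines cleanly with $m/(ms+j-n/2)$ to match the stated form.
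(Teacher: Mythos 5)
Your proof is correct and follows essentially the same route as the paper: split the Mellin integral at $t=1$, substitute the truncated heat-kernel expansion \eqref{HeatKernelExpansionRemainder} on $(0,1]$ using Lemma~\ref{LemmaEmValueAtZero} to make $\e_t^m(x,x)$ explicit, and read off the domain of validity from the remainder estimate~\eqref{HeatRemainderEstimate}. The algebra matching the prefactors and the derivation of the half-plane $m\,\mathrm{Re}(s)>n/2-N-1$ are both right.
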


\begin{proof}
First notice that \eqref{ZetaFormula} indeed makes sense in the half plane claimed by the estimate \eqref{HeatRemainderEstimate} and the decay properties of $p_t^+(x, x)$ for $t \rightarrow \infty$.
Now split the integral in \eqref{MellinTransform} up into two pieces, one over $(0, 1]$ and one over $[1, \infty)$. In the first integral, write
\begin{equation*}
  p_t^+(x, x) = \e_t^m(x, x)\sum_{j=0}^N t^{\frac{j}{m}} \frac{\Phi_j^L(x, x)}{\Gamma\left(\frac{j}{m}+1\right)} + r_t^N(x, x) - p_t^{\leq 0}(x, x)
  \end{equation*}
Multiplying the individual terms by $t^{s-1}$, one integrates then over $(0, 1]$ to obtain \eqref{ZetaFormula}. In particular, the first term (the sum over $j$) can be obtained using the explicit formula for $\e_t^m(x, x)$ from Lemma~\ref{LemmaEmValueAtZero}, which yields
\begin{equation*}
  \int_0^1 t^{s-1} \left(\e_t^m(x, x)\sum_{j=0}^N t^{\frac{j}{m}} \frac{\Phi_j^L(x, x)}{\Gamma\left(\frac{j}{m}+1\right)} \right) \dd t = \sum_{j=0}^N \frac{\Phi_j^L(x, x) \Gamma\left(\frac{n}{2m}\right)}{m (4\pi)^{\frac{n}{2}} \Gamma\left(\frac{n}{2}\right) \Gamma\left(\frac{j}{m}+1\right) } \int_0^1 t^{s-1+\frac{j}{m} - \frac{n}{2m}} \dd t.
\end{equation*}
Carrying out the integral  gives the first term of \eqref{ZetaFormula}.
\end{proof}

\section{Asympotic Expansion of Complex Powers} \label{SectionExpansion}

In this section, we prove two different results regarding the asymptotic expansion of $L^{-s}$, Thm.~\ref{ThmAsymptoticExpansion} and Thm.~\ref{ThmConstantTerm}.

\begin{theorem}[Asymptotic expansion of $L^{-s}$] \label{ThmAsymptoticExpansion}
Let $L$ be a formally self-adjoint $m$-Laplace type operator, acting on sections of a metric vector bundle $\V$ over $M$.
Then for any $s \in \C$, the Schwartz kernel $L^{-s}(x, y)$ has a complete asymptotic expansion near the diagonal in terms of Riesz distributions. More precisely, we have
\begin{equation} \label{AsymptoticExpansion}
  L^{-s}(x, y)~ \sim~ \sum_{j=0}^\infty \Phi_j^L(x, y) \,\mathrm{f.p.}_{\alpha=s}\left\{\binom{\alpha-1+\frac{j}{m}}{\frac{j}{m}} I_{2m\alpha+2j}(x, y)\right\},
\end{equation}
meaning that the difference of $L^{-s}(x, y)$ and the first $N$ terms of the series is in $C^k(M \bowtie M, \V \boxtimes \V^*)$ whenever $N > n/2 + k - 1 - m\mathrm{Re}(s)$. Here the coefficients $\Phi_j^L(x, y)$ are the heat kernel coefficients appearing in \eqref{HeatKernelExpansion}.
\end{theorem}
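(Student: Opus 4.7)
My plan is to start from the Mellin transform representation
\[
L^{-s}(x,y) \;=\; \frac{1}{\Gamma(s)}\int_0^\infty t^{s-1} p_t^+(x,y)\,dt \;+\; L^{-s}_-(x,y),
\]
valid for $\mathrm{Re}(s) > n/(2m)$ (and extendable in $s$ by analytic continuation), substitute the Minakshisundaram-Pleijel expansion \eqref{HeatKernelExpansion} into the small-$t$ part of the integral, and invoke Proposition~\ref{PropositionIntegralE} to convert each summand into a Riesz distribution modulo a smooth remainder.

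First I would split $\int_0^\infty = \int_0^1 + \int_1^\infty$. The tail integral is smooth in $(x,y)\in M\times M$ because $p_t^+$ is jointly smooth for $t\geq 1$ with exponentially decaying spectral sum, and $L^{-s}_-(x,y)$ is also smooth since the non-positive spectrum is finite-dimensional with smooth eigenfunctions. Writing $p_t^+ = p_t^L - p_t^{\leq 0}$, the term $p_t^{\leq 0}$ again contributes only a smooth function, so all singular behaviour of $L^{-s}(x,y)$ near the diagonal comes from $\frac{1}{\Gamma(s)}\int_0^1 t^{s-1} p_t^L(x,y)\,dt$.

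Inserting $p_t^L(x,y) = \e_t^m(x,y)\sum_{j=0}^N t^{j/m}\Phi_j^L(x,y)/\Gamma(j/m+1) + r_t^N(x,y)$, the $j$-th summand contributes
\[
\frac{\Phi_j^L(x,y)}{\Gamma(s)\Gamma(j/m+1)} \int_0^1 t^{s+j/m-1}\e_t^m(x,y)\,dt \;=\; \binom{s-1+\tfrac{j}{m}}{\tfrac{j}{m}}\Phi_j^L(x,y)\,E_{m,\,s+j/m}(x,y),
\]
after recognising the integral as $\Gamma(s+j/m)\,E_{m,s+j/m}$ and using the identity $\binom{\alpha-1+j/m}{j/m} = \Gamma(\alpha+j/m)/(\Gamma(\alpha)\Gamma(j/m+1))$. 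Proposition~\ref{PropositionIntegralE}(2), applied fiberwise in $T_xM$ and transplanted to $M\bowtie M$ by the same exponential-map prescription used to define both $\e_t^m$ and $I_\alpha$ on the manifold, then gives $E_{m,s+j/m} = I_{2ms+2j} + \Psi_{m,s+j/m}$ with $\Psi$ smooth. Absorbing $\Psi$ into the remainder produces exactly the $j$-th summand of \eqref{AsymptoticExpansion}.

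For the leftover piece $\frac{1}{\Gamma(s)}\int_0^1 t^{s-1} r_t^N(x,y)\,dt$, the estimate \eqref{HeatRemainderEstimate} yields $|\nabla_x^k\nabla_y^l r_t^N(x,y)| \lesssim t^{(N-k-l+1)/m - n/(2m)}$, which multiplied by $t^{s-1}$ is integrable on $(0,1]$ precisely when $N > n/2 + k + l - 1 - m\mathrm{Re}(s)$, yielding the claimed $C^k$ regularity of the remainder. The main obstacle is the careful bookkeeping of the finite-part prescription: for generic $s$ all the gamma and Riesz factors are regular and the identity is a straightforward equality, but at parameters where $s$, $s+j/m$, or $2ms+2j-n-2k$ hits a pole of a gamma function or of $I_\alpha$, individual factors blow up. The remedy is to treat the whole argument meromorphically in an auxiliary parameter $\alpha$ and to take $\mathrm{f.p.}_{\alpha=s}$ of the combined expression $\binom{\alpha-1+j/m}{j/m}\,I_{2m\alpha+2j}$ only at the very end; one then checks that residues of the gamma factors, of $I_\alpha$ and of $\Psi_{m,\alpha}$ cancel in the expected way (using Remark~\ref{RemarkPolesOfPsi}) so that the identity persists at the singular values.
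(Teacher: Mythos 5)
Your proposal is correct and follows essentially the same route as the paper's proof: the Mellin transform split at $t=1$, smoothness of the tail and of the non-positive spectral contributions, insertion of the Minakshisundaram-Pleijel expansion, Prop.~\ref{PropositionIntegralE} to replace $E_{m,s+\frac{j}{m}}$ by $I_{2ms+2j}$ plus a smooth $\Psi$-term absorbed into the remainder, the estimate \eqref{HeatRemainderEstimate} for the $C^k$ statement, and a finite-part/meromorphic argument at the exceptional parameters. The only place the paper is more explicit is the extension to arbitrary $s\in\C$, where it notes that $L^{-s}$ is entire as a distribution-valued function while the right-hand side of the resulting representation is holomorphic on a half-plane growing with $N$, which is precisely the content of your "analytic continuation in $s$" remark.
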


Above, we denoted
\begin{equation*}
  \binom{\alpha}{\beta} = \frac{\Gamma(\alpha+1)}{\Gamma(\beta+1) \Gamma(\alpha-\beta+1)}.
\end{equation*}
If $\frac{n}{2}-ms \notin \Z$, then the terms of \eqref{AsymptoticExpansion} do not have poles at the relevant values and we obtain the asymptotic expansion
\begin{equation*}
  L^{-s}(x, y) ~\sim~ \sum_{j=0}^\infty \Phi_j^L(x, y) \,\binom{s-1+\frac{j}{m}}{\frac{j}{m}} I_{2ms+2j}(x, y).
\end{equation*}
If however $\frac{n}{2}-ms \in \Z$, then $ms+j - \frac{n}{2} \in  \N_0$ from $j = \frac{n}{2}-ms$ onward so that we obtain
\begin{equation*}
\begin{aligned}
 L^{-s}(x, y)~ \sim~ &\sum_{j=0}^{\frac{n}{2}-ms -1} \Phi_j^L(x, y) \binom{s-1+\frac{j}{m}}{\frac{j}{m}} I_{2ms+2j}(x, y) \\
 &~~~~~~~~~~~~+ \sum_{j=\frac{n}{2}-ms}^\infty \Phi_j^L(x, y) \,\mathrm{f.p.}_{\alpha=s}\left\{\binom{\alpha-1+\frac{j}{m}}{\frac{j}{m}} I_{2m\alpha+2j}(x, y)\right\}.
 \end{aligned}
\end{equation*}

\begin{example}
The case $s = 1$ is probably most relevant, since in this case, $L^{-s}$ is the Green's operator to $L$. In that case we have
\begin{equation*}
  L^{-1}(x, y) \sim \sum_{j=0}^\infty \Phi_j(x, y) \,\mathrm{f.p.}_{\alpha=1}I_{2m\alpha+2j}(x, y).
\end{equation*}
Writing out the right hand side explicitly in the case that $m=1$ (i.e.\ the case that $L$ is a Laplace type operator) gives two different answers depending on whether the dimension $n$ is even or odd. In the case that $n$ is odd, we have the asymptotic expansion
\begin{equation*}
   L^{-1}(x, y) ~\sim~ \sum_{j=0}^\infty \Phi_j(x, y) \frac{\Gamma\left(\frac{n}{2} -1-j\right)}{(4\pi)^{\frac{n}{2}} j!} \left(\frac{d(x, y)}{2}\right)^{2j+2-n}.
\end{equation*}
In the case that $n$ is even, the gamma function has a pole from the coefficient $j=\frac{n}{2}-1$ onward and we get
\begin{equation*}
\begin{aligned}
  L^{-1}(x, y) ~\sim~ \sum_{j=0}^{\frac{n}{2}-2}& \Phi_j(x, y) \frac{\left(\frac{n}{2} -2-j\right)!}{(4\pi)^{\frac{n}{2}} j!} \left(\frac{d(x, y)}{2}\right)^{2-n + 2j}\\
  & + \sum_{j=\frac{n}{2}-1}^\infty \Phi_j(x, y)  \,(-1)^{j-\frac{n}{2}}\frac{2\log\bigl(d(x, y)\bigr)}{(4\pi)^{\frac{n}{2}} \left(j+1-\frac{n}{2}\right)! j! }\left(\frac{d(x, y)}{2}\right)^{2-n+2j}.
\end{aligned}
\end{equation*}
\end{example}

\begin{proof}[of Thm.~\ref{ThmAsymptoticExpansion}]
Denote by $p_t^+(x, y)$ the positive spectral part of the heat kernel as in \eqref{PositiveHeatKernel}. Suppose first that $\mathrm{Re}(s)>\frac{n}{2m}$. Then  $L^{-s}(x, y)$ is continuous and we may integrate \eqref{DefinitionHeatKernel} termwise to obtain the pointwise equality
\begin{equation*}
  L^{-s}(x, y) = L_-^{-s}(x, y) + \frac{1}{\Gamma(s)} \int_0^\infty t^{s-1} p_t^+(x, y) \dd t.
\end{equation*}
Clearly, the first term of this sum is smooth on $M\times M$ and entire in $s$, so we only need to consider the second term. We split the integral up as
\begin{equation*}
\begin{aligned}
  \frac{1}{\Gamma(s)} \int_0^\infty t^{s-1} p_t^+(x, y) \dd t &= \frac{1}{\Gamma(s)} \int_0^1 t^{s-1} p_t^L(x, y) \dd t - \frac{1}{\Gamma(s)} \int_0^1 t^{s-1} p_t^{\leq 0}(x, y) \dd t  \\
  &~~~~+ \frac{1}{\Gamma(s)} \int_1^\infty t^{s-1} p_t^+(x, y) \dd t
\end{aligned}
\end{equation*}
where $p_t^{\leq 0}(x, y)$ denotes the non-positive spectral part of $p_t^L(x, y)$.
The last two terms are easily found to be smooth in $M \times M$ and entire in $s$. For the first term, we use the asymptotic expansion \eqref{HeatKernelExpansion} with some large $N \in \N$ (to be specified later) to obtain
\begin{equation*}
   \int_0^1 t^{s-1} p_t^L(x, y) \dd t = \sum_{j=0}^N \frac{\Phi^L_j(x, y)}{\Gamma\left(\frac{j}{m}+1\right)} \int_0^1 t^{s + \frac{j}{m}- 1}\e_t^m(x, y) \dd t + \int_0^1 t^{s-1} r_t^N(x, y) \dd t\\
\end{equation*}
By \eqref{HeatRemainderEstimate}, the last term is $C^k$ on $M \bowtie M$, provided $N > \frac{n}{2} + k - 1 - m\mathrm{Re}(s)$. That is, for any half plane $\{s \in \C \mid \mathrm{Re}(s) > -\ell \}$, we can make $R^N_3(s; x, y)$  holomorphic on this half plane with values in $C^k(M \bowtie M, \V \boxtimes \V^*)$, by choosing $N$ large enough. By Prop.~\ref{PropositionIntegralE}, we have
\begin{equation*}
\begin{aligned}
 \frac{1}{\Gamma(s)} \int_0^1 t^{s + \frac{j}{m}- 1}\e_t^m(x, y) \dd t &= \frac{\Gamma\left(s+\frac{j}{m}\right)}{\Gamma(s)}\Bigl( \mathrm{f.p.}_{\alpha=s}I_{2m\alpha + 2j}(x, y) + \mathrm{f.p.}_{\alpha=s}\Psi_{m, \alpha+\frac{j}{m}}(x, y)\Bigr)\\
 &= \mathrm{f.p.}_{\alpha = s} \left\{ \frac{\Gamma\left(\alpha+\frac{j}{m}\right)}{\Gamma(\alpha)}\Bigl( I_{2m\alpha + 2j}(x, y) + \Psi_{m, \alpha+\frac{j}{m}}(x, y)\Bigr)\right\}
 \end{aligned}
\end{equation*}
for $\mathrm{Re}(s) >0$ in the distributional sense and for $\mathrm{Re}(s) > \frac{n}{2m}$ pointwise, where the function $\mathrm{f.p.}_{\alpha = s} \Psi_{m, \alpha+\frac{j}{m}}(x, y)$ is smooth on $M \bowtie M$ (where we transferred $\Psi_{m, \alpha}$ from $V = T_x M$ to $M \bowtie M$ via the exponential map, in the same way as $I_\alpha$ and $\e_t^m$). Here, for the second equality, we used that the left hand side is holomorphic and that taking the finite part is linear.

Assembling all the pieces, we get that
\begin{equation} \label{RepresentationL}
\begin{aligned}
  L^{-s}(x, y) &= L_-^{-s}(x, y) - \frac{1}{\Gamma(s)} \int_0^1 t^{s-1} p_t^{\leq 0}(x, y) \dd t + \frac{1}{\Gamma(s)} \int_1^\infty t^{s-1} p_t^+(x, y) \dd t\\
  &~~~~+ \frac{1}{\Gamma(s)} \int_0^1 t^{s-1} r_t^N(x, y) \dd t\\
  &~~~~+ \sum_{j=0}^N \Phi_j^L(x, y) \mathrm{f.p.}_{\alpha=s}\left\{ \frac{\Gamma\left(\alpha+\frac{j}{m}\right)}{\Gamma(\alpha) \Gamma\left(\frac{j}{m} + 1\right)}  I_{2m\alpha+2j}(x, y)\right\} \\
  &~~~~+ \sum_{j=0}^N \Phi_j^L(x, y)\mathrm{f.p.}_{\alpha=s} \left\{ \frac{\Gamma\left(\alpha+\frac{j}{m}\right)}{\Gamma(\alpha) \Gamma\left(\frac{j}{m} + 1\right)}  \Psi_{m, \alpha+\frac{j}{m}}(x, y)\right\}.
\end{aligned}
\end{equation}
Since the residues as well as the finite part of $\Psi_{m, s+\frac{j}{m}}$ are smooth (see Remark~\ref{RemarkPolesOfPsi}), so is
\begin{equation*}
\begin{aligned}
&\mathrm{f.p.}_{\alpha=s} \left\{ \frac{\Gamma\left(\alpha+\frac{j}{m}\right)}{\Gamma(\alpha) \Gamma\left(\frac{j}{m} + 1\right)}  \Psi_{m, \alpha+\frac{j}{m}}(x, y)\right\} = \\
&\frac{\Gamma\left(s+\frac{j}{m}\right)}{\Gamma(s) \Gamma\left(\frac{j}{m} + 1\right)}  \mathrm{f.p.}_{\alpha=s} \Psi_{m, \alpha+\frac{j}{m}}(x, y)
+ \frac{\dd}{\dd \alpha}\Bigr|_{\alpha = s}  \left\{ \frac{\Gamma\left(\alpha+\frac{j}{m}\right)}{\Gamma(\alpha) \Gamma\left(\frac{j}{m} + 1\right)}\right\} \res_{\alpha = s} \Psi_{m, \alpha+\frac{j}{m}}(x, y).
\end{aligned}
\end{equation*}

Since the left hand side of \eqref{RepresentationL} is entire in $s$ as a holomorphic function with values in distributions on $M \bowtie M$, and the right hand side is holomorphic in $s$ for $m\mathrm{Re}(s) > \frac{n}{2} - 1 - N$ as a distribution-valued function by \eqref{HeatRemainderEstimate}, this representation can be made valid for any fixed $s \in \C$, by possibly increasing $N$. 
Furthermore, \eqref{HeatRemainderEstimate} implies that the term involving $r_t^N(x, y)$ is $C^k$ on $M \bowtie M$ if $N > \frac{n}{2} + k - 1 - m\mathrm{Re}(s)$, while all other terms on the right hand side except the Riesz distributions are smooth on $M \times M$. This finishes the proof.
\end{proof}

\begin{remark}
Assuming that $L$ has only positive eigenvalues, we have for $\mathrm{Re}(s) > \frac{n}{2m}$ the equality
  \begin{equation*}
  L^{-s}(x, y) = \frac{1}{\Gamma(s)} \int_0^\infty t^{s-1} p_t^L(x, y) \dd t.
\end{equation*}
This inequality continues to hold for  $0 < \mathrm{Re}(s) \leq \frac{n}{2m}$, but only in the {\em distributional sense}, i.e.\
\begin{equation*}
   (L^{-s} \varphi)(x) = \frac{1}{\Gamma(s)} \int_0^\infty t^{s-1} \int_M p_t^L(x, y) \varphi(y) \dd V_g(y) \dd t
\end{equation*}
for smooth functions $\varphi$. In particular, for $m=1$, $L^{-s}(x, y)$ is not necessarily positive for $0 < s \leq \frac{n}{2}$ even though formally, it is given as an integral over the positive function $p_t^L(x, y)$.
\end{remark}

Thm.~\ref{ThmAsymptoticExpansion} states that by adding more and more correction terms, the difference between $L^{-s}(x, y)$ and the asymptotic expansion becomes more and more regular. However, there is no pointwise control of the error term. In contrast, the next result gives a pointwise asymptotic expansion.

\begin{theorem}[The constant Term is a Zeta Function] \label{ThmConstantTerm}
  Near the diagonal, we have the asymptotic expansion
  \begin{equation*}
     L^{-s}(x, y) = \sum_{j=0}^{\lfloor \frac{n}{2}-ms\rfloor} \!\!\Phi^L_j(x, y) \,\mathrm{f.p.}_{\alpha = s} \left\{\binom{\alpha-1+\frac{j}{m}}{\frac{j}{m}} I_{2m\alpha+2j}(x, y)\right\} + \mathrm{f.p.}_{\alpha = s}\zeta_L(\alpha, x) + o(1),
  \end{equation*}
  where $\zeta_L(s, x)$ is the local zeta function of $L$, defined in \eqref{LocalZeta}. 
\end{theorem}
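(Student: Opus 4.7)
The plan is to work from the exact identity \eqref{RepresentationL} for $L^{-s}(x,y)$ derived in the proof of Theorem~\ref{ThmAsymptoticExpansion}. Choosing $N$ large enough (specifically $N > \frac{n}{2} - 1 - m\,\mathrm{Re}(s)$) so that $\int_0^1 t^{s-1} r_t^N(x,y)\,\dd t$ is continuous on $M\times M$, one sees that the identity decomposes $L^{-s}(x,y)$ into three groups: four pieces that are smooth on $M\times M$ (the $L_-^{-s}$ term, the $\int_0^1 p_t^{\leq 0}$ integral, the $\int_1^\infty p_t^+$ integral, and the $r_t^N$ integral); a singular Riesz sum $\sum_{j=0}^N \Phi_j^L(x,y)\,\mathrm{f.p.}_{\alpha=s}\{\binom{\alpha-1+j/m}{j/m}I_{2m\alpha+2j}(x,y)\}$; and a $\Psi$-sum $\sum_{j=0}^N \Phi_j^L(x,y)\,\mathrm{f.p.}_{\alpha=s}\{\frac{\Gamma(\alpha+j/m)}{\Gamma(\alpha)\Gamma(j/m+1)}\Psi_{m,\alpha+j/m}(x,y)\}$ which is smooth on $M \bowtie M$ by Proposition~\ref{PropositionIntegralE}(2)--(3).

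My first move is to split the Riesz sum at $j_0 := \lfloor \frac{n}{2} - ms \rfloor$. For $j \leq j_0$ the terms are, by direct comparison, exactly the singular pieces appearing in the statement. For $j > j_0$ I would argue that each such term tends to zero as $y \to x$. Indeed, then $2m\,\mathrm{Re}(s) + 2j > n$; if $2ms+2j$ is not of the form $n+2k$, the Riesz potential $I_{2ms+2j}(v) = C(2ms+2j,n)|v|^{2ms+2j-n}$ is continuous in $v$ with value $0$ at $v=0$, while in the exceptional case $2ms+2j = n+2k$ with $k\geq 1$, Remark~\ref{RemarkPolesOfI} together with the expansion $|v|^{\alpha-n} = |v|^{2k}(1 + (\alpha-n-2k)\log|v| + \dots)$ shows that the finite part is a linear combination of $|v|^{2k}$ and $|v|^{2k}\log|v|$, both vanishing at $v=0$. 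Hence the large-$j$ Riesz terms contribute $o(1)$ as $y \to x$.

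It remains to evaluate the smooth pieces and the $\Psi$-sum at the diagonal and identify the result with $\mathrm{f.p.}_{\alpha=s}\zeta_L(\alpha,x)$. For the $\Psi$-sum I would substitute the explicit value given by \eqref{FormulaFpQ},
\[
   \frac{\Gamma\bigl(\alpha+\frac{j}{m}\bigr)}{\Gamma(\alpha)\Gamma\bigl(\frac{j}{m}+1\bigr)}\,\Psi_{m,\alpha+j/m}(x,x)
   = \frac{\Gamma\bigl(\frac{n}{2m}\bigr)}{(4\pi)^{n/2}\,\Gamma\bigl(\frac{n}{2}\bigr)\,\Gamma(\alpha)\,\Gamma\bigl(\frac{j}{m}+1\bigr)\bigl(m\alpha + j - \frac{n}{2}\bigr)},
\]
viewed as an equality of meromorphic functions of $\alpha$. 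Adding the four smooth pieces evaluated at $y = x$, the resulting expression matches the right-hand side of \eqref{ZetaFormula} term-by-term (with $s$ replaced by $\alpha$), hence equals $\zeta_L(\alpha,x)$ as a meromorphic function of $\alpha$ near $s$. Taking finite parts at $\alpha = s$ produces $\mathrm{f.p.}_{\alpha=s}\zeta_L(\alpha,x)$, as required.

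The main technical point I expect to need care with is the commutation of $\mathrm{f.p.}_{\alpha=s}$ with evaluation on the diagonal for the $\Psi$-sum; this is legitimized by Proposition~\ref{PropositionIntegralE}(2)--(3), which guarantees that $\mathrm{f.p.}_{\alpha=s}\Psi_{m,\alpha+j/m}$ is a genuine smooth function of the spatial variable, so pointwise evaluation at $y=x$ is defined and commutes with taking finite parts. A subsidiary concern is that individual summands on the right-hand side of \eqref{RepresentationL} may carry simple poles at $\alpha = s$, but these must cancel in the total sum since the left-hand side is distributionally holomorphic in $s$; the term-by-term matching with \eqref{ZetaFormula} makes this cancellation automatic.
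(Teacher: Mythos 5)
Your proposal is correct and follows essentially the same route as the paper: it starts from the representation \eqref{RepresentationL}, evaluates the $\Psi$-sum on the diagonal via the explicit formula \eqref{FormulaFpQ} of Prop.~\ref{PropositionIntegralE}(4), and matches the result term-by-term with the analytic continuation \eqref{ZetaFormula} of the local zeta function. Your additional step of splitting the Riesz sum at $j_0 = \lfloor \frac{n}{2}-ms\rfloor$ and checking that the higher Riesz terms (including the finite parts at exceptional values, which involve $d(x,y)^{2k}$ and $d(x,y)^{2k}\log d(x,y)$ with $k\geq 1$) vanish on the diagonal just makes explicit what the paper's brief comparison leaves implicit.
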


\begin{remark}
  Thm.~\ref{ThmConstantTerm} tells us that the constant term in the asymptotic expansion of $L^{-s}(x, y)$ is given by the value at $s$ of the local zeta function of $L$. In the case that $n - 2ms \in 2\Z$, the term in the asymptotic expansion with $j = \frac{n}{2}- ms$  is given by $\Phi^L_{\frac{n}{2}-ms}$ times
  \begin{equation*}
  \begin{aligned}
    \mathrm{f.p.}_{\alpha = 0} &\left\{ \binom{ \alpha - 1 +\frac{n}{2m}}{\frac{n}{2m} - s} I_{n+2m\alpha} \right\}\\
    &= \left.\frac{\dd}{\dd \alpha}\right|_{\alpha=0} \binom{\alpha - 1 + \frac{n}{2m}}{\frac{n}{2m} - s} \res_{\alpha=0} I_{n+2m\alpha} + \binom{\frac{n}{2m} - 1}{\frac{n}{2m} - s} \mathrm{f.p.}_{\alpha=0} I_{n+2m\alpha}\\
    &= \frac{1}{(4\pi)^{\frac{n}{2}}\Gamma\left(\frac{n}{2}\right)}\binom{\frac{n}{2m} - 1}{\frac{n}{2m} - s}\left( \frac{\psi(s)-\psi\left(\frac{n}{2m}\right)}{m} +\psi\left(\frac{n}{2}\right) - \gamma - 2 \log\left(\frac{d(x, y)}{2}\right)\right),
  \end{aligned}
  \end{equation*}
  where $\gamma$ is the Euler-Mascheroni constant and $\psi = {\Gamma^\prime}/{\Gamma}$ is the digamma function (compare Remark~\ref{RemarkFinitePartOfI}). Now since this term contains $\log$ terms, it is not exactly clear what should be considered to be the constant term of the asymptotic expansion, since any constant term can be absorbed into the logarithm. Thm.~\ref{ThmConstantTerm} gives a natural candidate for such a constant term. Notice that this ambiguity does not occur if we have $\Phi^L_{\frac{n}{2} - ms}(x, x) = 0$.
\end{remark}

\begin{proof}
By Prop.~\ref{PropositionIntegralE}, we have
\begin{equation*}
  \Psi_{m, \alpha+\frac{j}{m}}(x, x) = \frac{\Gamma\left(\frac{n}{2m}\right)}{ (4\pi)^{n/2} \Gamma\left(\frac{n}{2}\right)\Gamma\left(\alpha+\frac{j}{m}\right)} \frac{1}{m\alpha + j - \frac{n}{2}}
\end{equation*}
as meromorphic functions, where $\Psi_{m, \alpha+\frac{j}{m}}$
Plugging this into \eqref{RepresentationL}, we obtain
\begin{equation*}
\begin{aligned}
\sum_{j=0}^N \Phi_j^L(x, x) \mathrm{f.p.}_{\alpha=s}&\left\{\frac{\Gamma\left(\alpha+\frac{j}{m}\right)}{\Gamma(\alpha) \Gamma\left(\frac{j}{m} + 1\right)}  \Psi_{m, \alpha+\frac{j}{m}}(x, x)\right\}\\
&~~~~~~~= \sum_{j=0}^N \Phi_j^L(x, x) \frac{\Gamma\left(\frac{n}{2m}\right)}{\Gamma\left(\frac{n}{2}\right) \Gamma\left(\frac{j}{m} + 1\right)}   \mathrm{f.p.}_{\alpha=s}\left\{\frac{1}{\Gamma(\alpha)\left(m \alpha + j - \frac{n}{2}\right)}\right\}
\end{aligned}
\end{equation*}
Now comparing formula  \eqref{RepresentationL} with the formula \eqref{ZetaFormula} for the analytic continuation of the zeta function gives the result.
\end{proof}

 \section{Conformal Geometry and the Positive Mass Theorem} \label{SectionConformalGeometry}

In this section, we set the stage for conformal geometry. Let $M$ be a Riemannian manifold of dimension $n$. For $w\in \R$, let $|\Lambda|^{\frac{w}{n}}$ be the line bundle of densities of weight $w$ on $M$. It can be constructed as the vector bundle associated to the $\mathrm{GL}(n)$ frame bundle of $M$ via the one-dimensional representation $\rho_w(A) := |\det(A)|^{\frac{w}{n}}$.
In particular, $|\Lambda|^0 = \underline{\R}$ is the trivial real line bundle over $M$, while $|\Lambda|^1$ is the bundle of volume densities on $M$, i.e.\ the objects that can be integrated over. The choice of a metric $g$ on $M$ gives rise to global trivializations $i_{g, w}: \underline{\R}= |\Lambda|^0 \rightarrow |\Lambda|^{\frac{w}{n}}$, by mapping $1 \in \R$ to $V_g^{w/n}$, where $V_g$ is the Riemannian volume density associated to the Riemannian metric $g$.

From now on fix a conformal class $\mathcal{C} = \{e^{2f}g_0\mid f \in C^\infty(M)\}$ of metrics on $M$. If $\V$ is a metric (real or complex) vector bundle over $M$, we say that $L$ is a {\em conformally invariant $m$-Laplace type operator} with respect to the conformal class $\mathcal{C}$, acting on $\V$, if $L$ is a differential operator of order $2m$ mapping sections of $|\Lambda|^{\frac{n-2m}{2n}} \otimes \V$ to sections of $|\Lambda|^{\frac{n+2m}{2n}} \otimes \V$ such that for each metric $g \in \mathcal{C}$, the operator
\begin{equation} \label{DefinitionLg}
  L_g := i_{g, \frac{n+2m}{2}}^{-1} \circ L \circ i_{g, \frac{n-2m}{2}}
\end{equation}
is an $m$-Laplace type operator, acting on sections of $\V$ (here the $i_{g, w}$ are supposed to act trivally on the $\V$ factor). We say that $L$ is formally self-adjoint, if $L_g$ is formally self-adjoint for each metric $g \in \mathcal{C}$. Under the conformal change $h = e^{2f} g$, the corresponding operators $L_h$, $L_{g}$ transform according to the law
\begin{equation} \label{ConformalChangeOfL}
  L_h = e^{-\frac{n+2m}{2}f} L_{g} e^{\frac{n-2m}{2}f}.
\end{equation}
where $e^{-\frac{n+2m}{2}f}$ and $e^{\frac{n-2m}{2}f}$ are to be understood as multiplication operators.

\begin{remark}
  If $L$ is any differential operator mapping $|\Lambda|^{a}\otimes \V$ to $|\Lambda|^b \otimes \V$ such that $L_g = i_{g, b}^{-1} \circ L \circ i_{g, a}$ is a formally self-adjoint $m$-Laplace type operator for any metric $g$ in a conformal class $\mathcal{C}$, then we necessarily have $a= \frac{n-2m}{2}$ and $b= \frac{n+2m}{2}$. This follows easily using the restricted form of the principal symbol and self-adjointness. Hence the special weights considered above are no actual restriction.
\end{remark}

We now give several examples for conformally invariant $m$-Laplace type operators.

\begin{example}
If $\tilde{L}$ is an $m$-Laplace type operator with respect to a fixed metric $g$, then we obtain a conformally invariant $m$-Laplace type operator $L$ with respect to the conformal class $\mathcal{C} = \{e^{2f}g | f \in C^\infty(M)\}$ by setting
\begin{equation*}
  L_{h} = e^{-\frac{n+2m}{2}f} \tilde{L} e^{\frac{n-2m}{2}f}
\end{equation*}
for $h=e^{2f}g$. This defines $L$ using \eqref{DefinitionLg}. $L$ is formally selfadjoint if and only if $\tilde{L}$ is.
\end{example}

\begin{example}[The Yamabe Operator]
The most classical example of a conformally covariant differential operator is the {\em Yamabe operator}, which is a Laplace type operator ($m=1$) and given by the formula
\begin{equation*}
   L_gu = \Delta_gu + \frac{n-2}{4(n-1)} \mathrm{scal}_gu,
\end{equation*}
where $\mathrm{scal}_g$ is the scalar curvature of $M$. Given a connection $\nabla$ on $\V$, one can also define a twisted Yamabe operator acting on sections $\V$, which then is again a conformally invariant Laplace type operator (see \cite{ParkerRosenberg}, Section~1).
\end{example}

\begin{example}[The Paneitz-Branson Operator] \label{ExPaneitz}
An example of a conformally covariant $2$-Laplace type operator is the {\em Paneitz-Branson} operator, given by
\begin{equation*}
  L_gu = \Delta^2 + \delta\bigl((n-2) J_g - 4P_g)\bigr) d + \left( \frac{n}{2} - 2\right) \left(\frac{n}{2} J_g^2 - 2|P_g|^2 - \Delta J_g \right)
\end{equation*}
where 
\begin{equation*}
  J_g = \frac{\mathrm{scal}_g}{2(n-1)}, ~~~~~~~~ P_g = \frac{1}{n-2} (\mathrm{ric}_g - J_g g)
\end{equation*}
are the normalized scalar curvature and the Schouten tensor \cite{Paneitz} \cite{BransonDifferentialOperators}. 
\end{example}

\begin{example}[The GJMS Operators] \label{ExampleGJMS}
  On $n$-dimensional manifolds, there is a self-adjoint conformally covariant $m$-Laplace type operator $L_g$ acting on functions for any $1 \leq m \leq \frac{n}{2}$ if $n$ is even and arbitrary $m$ if $n$ is odd or conformally Einstein. These involve more and more complicated expressions in the curvature tensor and its derivatives, Example~\ref{ExPaneitz} being only the start. They were constructed by Graham, Jenne, Mason and Sparling in their seminal work \cite{GJMS1} and therefore called GJMS operators. The non-existence of GJMS operators with $m>\frac{n}{2}$ on general even-dimensional manifolds was shown in \cite{GoverHirachi}. The recursive structure of these operators was investigated by Juhl in \cite{Juhl1}, \cite{Juhl2},  \cite{Juhl3}, and \cite{Juhl4}. 
  
The GJMS operators are related to the problem of prescribing the $Q$-curvature of a Riemannian manifold in a fixed conformal class, just as the Yamabe-operator appears in the problem of describing the scalar curvature (i.e.\ the Yamabe problem). For this problem, see e.g.~\cite{ChangYangAnnals},\cite{DjaldiMacholdi}, \cite{Brendle} and the references therein.
  
For further references on the GJMS operators, see also \cite{BransonFunctionalDeterminant}, \cite{Gover}, \cite{BaumJuhl},  \cite{FeffermanGrahamJuhl}.
\end{example}

We are now ready to formulate the main result.

\begin{theorem}[Conformal Invariance of $\zeta_g(1, x)$]\label{ThmLocalInvariants}
Let $\V$ be a metric vector bundle over a compact $n$-dimensional Riemannian manifold $(M, g)$ and let $L$ be a conformally invariant, formally self-adjoint $m$-Laplace type operator with respect to a conformal structure $\mathcal{C}$ on $M$, acting on sections of $\V$. If $n$ is odd or $m > \frac{n}{2}$, and if $\ker(L_g) = 0$, then the value of the local zeta function of $L_g$ at one, $ \zeta_{g}(1, x) \in \mathrm{End}(\V_x)$, satisfies the transformation law
 \begin{equation*}
   \zeta_{h}(1, x) = e^{(2m-n)f(x)}  \zeta_{g}(1, x)
  \end{equation*}
if $h = e^{2f} g$ for $f \in C^\infty(M)$. That is, $\zeta_g(1, x)$ transforms as a density of weight $2m-n$ under a conformal change. 
\end{theorem}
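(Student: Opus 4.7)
The plan is to convert the conformal transformation law for $L$ into one for the Green's kernel, extract diagonal zeta-values via Thm~\ref{ThmConstantTerm}, and reduce the claim to the vanishing of a residual local quantity which can then be controlled by a variation argument. Starting from \eqref{ConformalChangeOfL} and noting that $\ker L_g = 0$ transfers to $\ker L_h = 0$ under conjugation, inversion yields the operator identity $L_h^{-1} = e^{-\frac{n-2m}{2}f} \circ L_g^{-1} \circ e^{\frac{n+2m}{2}f}$. Taking $L_g^{-1}(x,y)$ to be the Schwartz kernel relative to $V_g$ and $L_h^{-1}(x,y)$ to be the Schwartz kernel relative to $V_h = e^{nf} V_g$, and accounting for this rescaling, produces the symmetric identity
\begin{equation*}
L_h^{-1}(x,y) \;=\; e^{\frac{2m-n}{2}f(x)}\, L_g^{-1}(x,y)\, e^{\frac{2m-n}{2}f(y)},
\end{equation*}
whose diagonal collapse to the factor $e^{(2m-n)f(x)}$ is precisely the weight claimed in the theorem.

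Next I invoke Thm~\ref{ThmConstantTerm} at $s=1$: under the hypothesis $n$ odd or $m > n/2$, the value $\lfloor n/2 - m\rfloor = J$ is the correct truncation index and no logarithmic anomaly enters. Writing
\begin{equation*}
L_g^{-1}(x,y) = \sum_{j=0}^J \Phi_j^{L_g}(x,y)\, I^g_{2m+2j}(x,y) + \zeta_g(1,x) + o(1)
\end{equation*}
as $y \to x$, together with the analogous expansion for $L_h^{-1}$ with regular value $\zeta_h(1,x)$, substituting into the kernel identity and taking the diagonal limit reduces the theorem to the assertion that
\begin{equation*}
D(x) := \lim_{y\to x}\Bigl[\, e^{\frac{2m-n}{2}(f(x)+f(y))}\sum_{j=0}^J \Phi_j^{L_g}(x,y) I^g_{2m+2j}(x,y) - \sum_{j=0}^J \Phi_j^{L_h}(x,y) I^h_{2m+2j}(x,y)\Bigr]
\end{equation*}
vanishes for all $x$. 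The quantity $D(x)$ is continuous and locally determined from the heat kernel coefficients and the two distance functions, but its vanishing is nontrivial because individual summands are singular.

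The hard part, and the step I would reserve most care for, is verifying $D(x) = 0$. A direct local attack would require unwinding how $d_h$ expands in powers of $d_g$, how $\Phi_j^{L_h}$ relates to $\Phi_j^{L_g}$ under a conformal change, and matching coefficients in the Laurent expansions of the Riesz kernels --- an intricate bookkeeping problem, even in the Laplace-type case $m=1$. The cleaner route, which I expect the paper adopts in Section~\ref{SectionProof}, is variational: introduce the family $h_t = e^{2tf}g$ and show that $t \mapsto e^{-(2m-n)tf(x)}\zeta_{h_t}(1,x)$ has vanishing $t$-derivative, so that integration from $0$ to $1$ yields the theorem. The derivative is computed from the Mellin representation \eqref{MellinTransform} and the resolvent identity $\tfrac{d}{dt}L_{h_t}^{-1} = -L_{h_t}^{-1}(\tfrac{d}{dt}L_{h_t})L_{h_t}^{-1}$, combined with $\tfrac{d}{dt}L_{h_t} = -\tfrac{n+2m}{2}f\cdot L_{h_t} + \tfrac{n-2m}{2} L_{h_t}\cdot f$ coming from \eqref{ConformalChangeOfL}; the resulting diagonal quantity is then regularized using Thm~\ref{ThmAsymptoticExpansion}. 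The parity/range hypothesis on $n,m$ is precisely what rules out a pole of $\zeta_L(s,x)$ at $s=1$, and hence any residue-type conformal anomaly in the variation --- in the excluded cases such an anomaly does appear, as announced by Thm~\ref{ThmMassInvariance}.
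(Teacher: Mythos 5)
Your proposal correctly identifies the two available routes, and your judgment call — that the direct diagonal-matching argument is intractable for non-constant $f$, and that the paper must instead prove a variational formula for $\zeta_{h_t}(1,x)$ along $h_t = e^{2tf}g$ — is exactly right, as is your observation that the parity/range hypothesis on $(n,m)$ is what prevents a pole of $\zeta_L(s,x)$ at $s=1$ and hence a conformal anomaly. The paper indeed deduces Thm.~\ref{ThmLocalInvariants} from the variation formula in Thm.~\ref{ThmMassInvariance}.

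However, as written this is an outline with the entire technical core missing, and the route you sketch for step (b) is not the one the paper takes. You propose to differentiate the Mellin representation using the resolvent identity $\frac{d}{dt}L_{h_t}^{-1} = -L_{h_t}^{-1}(\frac{d}{dt}L_{h_t})L_{h_t}^{-1}$ and then ``regularize using Thm.~\ref{ThmAsymptoticExpansion}.'' This is problematic because $L^{-1}(x,y)$ is singular on the diagonal, so composing resolvents and extracting diagonal values requires exactly the kind of delicate subtraction you flagged as hard in approach (a). The paper instead applies the conformal variation \eqref{ConformalChangeOfL} to the \emph{heat semigroup} via Duhamel's principle (Lemma~\ref{LemmaHeatKernelVariation}), obtaining a formula for $\delta_f p_t^g(x,x)$ involving the smooth kernel $p_t^g$ only; an integration by parts in $t$ then produces the bilinear operator $Q_t f(x) = \frac{1}{t}\int_0^t \int_M p_{t-u}^g(x,y) f(y) p_u^g(y,x)\,dV_g(y)\,du$, and the decisive step is Lemma~\ref{LemmaAAsymptotics}, which establishes a small-time asymptotic expansion $Q_t f(x) \sim \sum_j t^{\frac{j}{m}-\frac{n}{2m}} Q_j f(x)$ with $Q_j$ a differential operator of order $2j$. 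After dividing by $\Gamma(s-1)$ (which has a simple zero at $s=1$), a pole of $\int_0^R t^{s-1}Q_t f(x)\,dt$ at $s=1$ survives precisely when $j = \frac{n}{2}-m$ is a nonnegative integer, i.e.\ when $n$ is even and $m \le \frac{n}{2}$; this is where the case split of the theorem actually arises, and your outline gives no mechanism to produce it. Finally, your proposal does not engage with the role of the hypothesis $\ker L_g = 0$ beyond noting that it is preserved under conjugation: in the paper it is needed because the careful $R\to\infty$ limit in the Mellin integral generates the extra term $-4m[L_g^{-1}f\Pi_g](x,x)$, which vanishes exactly when $\Pi_g = 0$. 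Without proving the heat-kernel variation lemma, the $Q_t$-asymptotics lemma, and the $R\to\infty$ analysis, the claim is not established.
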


The proof of this result will be carried out in the next section. Notice that if the dimension $n$ of $M$ is odd, then $\zeta_L(s, x)$ is regular at $s=1$ so that $\mathrm{f.p.}_{s=1} \zeta_g(s, x) = \zeta_g(1, x)$, while taking the finite part is really necessary if $n$ is even. 

\begin{definition}[Mass]
The {\em mass} of $L_g$ at a point $x \in M$ is defined by the formula
\begin{equation*}
  \mathfrak{m}(x, L_g) := \mathrm{f.p.}_{s=1} \zeta_g(s, x),
\end{equation*}
where $\zeta_g(s, x) := \zeta_{L_g}(s, x)$ is the local zeta function corresponding to $L_g$.
\end{definition}

Thm.~\ref{ThmLocalInvariants} tells us that for a conformally invariant $m$-Laplace type operator, the mass $\mathfrak{m}(x, L_g)$ is conformally covariant, i.e.\ it transforms as a density of weight $2m-n$ under a conformal change.

\begin{remark}
The notion of mass discussed in \cite{HermannHumbert}, \cite{AmmannHumbert} or \cite{HumbertRaulot} always makes the assumption that $M$ be flat near $x$. Our definition of mass works without this assumption, in virtue of Thm.~\ref{ThmConstantTerm}. 

Of course, if one restricts to conformal changes $f$ which are constant in a neighborhood of $x$, the conformal covariance of $\mathfrak{m}(x, L_g)$ from Thm.~\ref{ThmLocalInvariants} is trivial, by the conformal transformation law of the Green's function, which follows from \eqref{ConformalChangeOfL}. Proving Thm.~\ref{ThmLocalInvariants} for conformal changes $f$ not being constant near $x$ is the hard part.
\end{remark}

As mentioned in the introduction, in the case that $L$ is the Yamabe operator on $M$, $\mathfrak{m}(x, L_g)$ is related to the ADM-mass of the asymptotically flat manifold $(\overline{M}, \overline{g})$ built from $(M, g)$ in the following way: Suppose that the metric $g$ is flat in a neighborhood of the point $x \in M$ and set $\gamma(y) := 4(n-1)\mathrm{vol}(S^{n-1}) L^{-1}(x, y)$. Then the metric $\overline{g} := \gamma^{\frac{4}{n-2}}g$ is asymptotically flat on the manifold $\overline{M} = M \setminus \{x\}$ with zero scalar curvature and one can calculate its ADM mass $\mathfrak{m}_{\mathrm{ADM}}(\overline{M}, \overline{g})$, which is an invariant of asymptotically flat manifolds of relevance in general relativity. It turns out that that $\mathfrak{m}_{\mathrm{ADM}}(\overline{M}, \overline{g}) = C_n \mathfrak{m}(x, L_g)$, where $C_n$ is a positive constant depending only on the dimension. The {\em positive mass conjecture} then states that the ADM mass of an asymptotically flat manifold $(\overline{M}, \overline{g})$ of non-negative scalar curvature is always non-negative, and zero if and only if $(\overline{M}, \overline{g})$ is isometric to flat $\R^n$. The conjecture is known to be true in the case that $n \leq 7$ or that $M$ is locally conformally flat \cite{SchoenYauADM} \cite{SchoenYauADM2} \cite{SchoenVariational} \cite{SchoenYauConfFlat}, in the case that $M$ is spin \cite{WittenMass} \cite{ParkerTaubes} and in the case that $\mathrm{ric}_g \geq 0$ \cite[Prop.~10.2]{LeeParker}. 

The corresponding conjecture for compact manifolds is the following: If $(M, g)$ is a compact Riemannian manifold with Yamabe invariant $Y(M, g) > 0$ and $g$ is flat in a neighborhood of a point $x \in M$, then $\mathfrak{m}(x, L_g) \geq 0$, and $\mathfrak{m}(x, L_g) = 0$ if and only if $(M, g)$ is conformally diffeomorphic to the round sphere. It can be shown \cite[Prop.~4.1]{HermannHumbertOnPositiveMass} that the positive mass conjecture for compact manifolds and the one for asymptotically flat manifolds are equivalent, but solution of the general case seems not to be available. Recent progress has been made in \cite{HermannHumbert}, where the authors show using bordism techniques that if the positive mass conjecture is true for one oriented non-spin manifold of some dimension $n$, then it is true for all manifolds of that dimension. 

There is also a version of the positive mass theorem for a higher order operator, namely the Paneitz-Branson operator, which is a  conformally covariant $2$-Laplace operator (see Example~\ref{ExPaneitz} above): E.~Humbert and S.~Raulot show that the mass of the Paneitz-Branson operator on conformally flat manifolds is positive under suitable positivity conditions \cite{HumbertRaulot}. It seems natural to ask if there are analogs of positive mass theorems for higher order GJMS operators. A first start in this direction is maybe the following result.

\begin{theorem} \label{ThmPositiveMassGJMS}
Let $(M, g)$ be a conformally flat Riemannian manifold of dimension $n \geq 3$ with finite fundamental group and let $x \in M$. If $n$ is even, assume furthermore $M$ is flat near $x$. Let $L$ be a GJMS operator of order less than $n$. Then $m(x, L_g)$ is non-negative, and zero if and only if $m(x, L_g)$ is conformally equivalent to the standard sphere.
\end{theorem}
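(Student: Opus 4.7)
The plan is to pass to the universal cover $\tilde M$, identify it conformally with the round sphere via Kuiper's theorem, and write the Green's function on $M$ as an average over the (finite) deck group, reducing everything to an explicit computation on $S^n$.

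\emph{Setup.} Since $\Gamma := \pi_1(M)$ is finite, the Riemannian universal cover $p : (\tilde M, \tilde g) \to (M, g)$ is a finite cover with $\tilde M$ compact, simply connected and conformally flat. Kuiper's theorem then yields a conformal diffeomorphism $\tilde M \to (S^n, g_0)$; writing $\tilde g = e^{2\phi} g_0$ we identify $\tilde M$ smoothly with $S^n$ and have $\Gamma$ act freely by conformal transformations. The hypothesis $2m < n$ makes $L_{g_0}$ have strictly positive spectrum on the round sphere (from the classical product formula for its eigenvalues on spherical harmonics), so $\ker L_{g_0} = 0$, and then by \eqref{ConformalChangeOfL} together with $\Gamma$-invariance, $\ker L_{\tilde g} = \ker L_g = 0$ as well.

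\emph{Decomposition.} For any lifts $\tilde x, \tilde y \in S^n$ of $x, y \in M$, the Green's function splits as
\begin{equation*}
L_g^{-1}(x, y) \;=\; \sum_{\gamma \in \Gamma} L_{\tilde g}^{-1}(\tilde x, \gamma \tilde y),
\end{equation*}
and only the $\gamma = 1$ term is singular on the diagonal, so applying Thm.~\ref{ThmConstantTerm} yields
\begin{equation*}
\mathfrak m(x, L_g) \;=\; \mathfrak m(\tilde x, L_{\tilde g}) \;+\; \sum_{\gamma \in \Gamma \setminus \{1\}} L_{\tilde g}^{-1}(\tilde x, \gamma \tilde x).
\end{equation*}
Each off-diagonal term is converted to the round metric via the conformal change law coming from \eqref{ConformalChangeOfL},
\begin{equation*}
L_{\tilde g}^{-1}(\tilde x, \gamma \tilde x) \;=\; e^{-(n-2m)(\phi(\tilde x) + \phi(\gamma \tilde x))/2}\, L_{g_0}^{-1}(\tilde x, \gamma \tilde x),
\end{equation*}
while $\mathfrak m(\tilde x, L_{\tilde g})$ is expressed in terms of $\mathfrak m(\tilde x, L_{g_0})$ by Thm.~\ref{ThmLocalInvariants} when $n$ is odd; when $n$ is even, the local flatness of $g$ near $x$ means that on a flat chart around $\tilde x$ the Green's function of $L_{\tilde g}$ decomposes as the Euclidean kernel $C_{n, m}\, |y|^{2m - n}$ plus a smooth correction whose value at $\tilde x$ realizes the mass directly, circumventing the failure of pointwise conformal covariance in even dimensions.

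\emph{Computation on the sphere.} The fundamental solution of $\Delta^m$ on $\R^n$ is $C_{n, m}\, |y|^{2m - n}$ with $C_{n, m} > 0$ (cf.\ Section~\ref{SectionRiesz}); pulling back by stereographic projection with conformal factor $\lambda = 2/(1 + |y|^2)$ and using \eqref{ConformalChangeOfL}, all factors of $\lambda$ cancel and one obtains
\begin{equation*}
L_{g_0}^{-1}(\xi, \eta) \;=\; C_{n, m}\, r(\xi, \eta)^{2m - n},
\end{equation*}
where $r$ is the chordal distance in the ambient $\R^{n+1}$. Since $r = 2 \sin(d/2)$ for $d$ the geodesic distance, the expansion of $r^{2m - n}$ near the diagonal contains only the exponents $d^{2m - n + 2k}$, $k \in \N_0$, which are precisely the exponents of the Riesz-distribution terms subtracted in Thm.~\ref{ThmConstantTerm}; hence $\mathfrak m(\tilde x, L_{g_0}) = 0$. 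The free action of $\Gamma$ also forces $r(\tilde x, \gamma \tilde x) > 0$ for $\gamma \ne 1$, so $L_{g_0}^{-1}(\tilde x, \gamma \tilde x) > 0$. Plugging back in, $\mathfrak m(x, L_g)$ is a positive linear combination of strictly positive terms indexed by $\Gamma \setminus \{1\}$, yielding the non-negativity and the equality criterion $\Gamma = \{1\}$, equivalently $(M, g)$ conformally diffeomorphic to $S^n$. The main obstacle is the reduction in the even-dimensional case, where Thm.~\ref{ThmLocalInvariants} is unavailable: here the local flatness hypothesis is essential precisely to kill the Taylor data of the conformal factor $\phi$ at $\tilde x$ that would otherwise obstruct identifying $\mathfrak m(\tilde x, L_{\tilde g})$ with a scalar multiple of $\mathfrak m(\tilde x, L_{g_0})$.
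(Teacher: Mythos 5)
Your overall strategy coincides with the paper's: pass to the finite universal cover, identify it conformally with $S^n$ (via Kuiper's theorem / the developing map), compute $\mathfrak m(\tilde x, L_{g_0})=0$ from the explicit Green's function $C_{n,m}r^{2m-n}$ on the sphere, decompose $L_g^{-1}(x,y) = \sum_\gamma L_{\tilde g}^{-1}(\tilde x,\gamma\tilde y)$, and conclude positivity from the off-diagonal terms. The odd-dimensional part is sound: on the round sphere $\Phi^L_j(x,y)$ is a smooth function of $d^2$ by symmetry, so both $L^{-1}_{g_0}$ and the subtracted Riesz sum have Taylor expansions only in odd powers $d^{2m-n+2k}$ when $n$ is odd, whence the constant term vanishes; then Thm.~\ref{ThmLocalInvariants} transports $\mathfrak m(\tilde x,L_{\tilde g})=0$.

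The gap is in the even case, which you acknowledge but do not close. You observe that, on a flat chart around $\tilde x$, the mass is realized as the value at $\tilde x$ of the smooth correction $L_{\tilde g}^{-1}(\tilde x,\cdot)-C_{n,m}|y|^{2m-n}$, and you assert that local flatness ``kills the Taylor data of $\phi$'' so that $\mathfrak m(\tilde x,L_{\tilde g})$ becomes a scalar multiple of $\mathfrak m(\tilde x,L_{g_0})$; but you never show this smooth correction actually vanishes at $\tilde x$, and there is no pointwise conformal covariance in even dimensions to invoke. The paper closes this by exploiting the freedom inherent in the developing map: choose a Euclidean isometry $\iota$ from a flat neighborhood $U$ of $\tilde x$ to a neighborhood of $0\in\R^n$, and then extend the local conformal immersion $\sigma^{-1}\circ\iota : U \to S^n$ uniquely to a global conformal diffeomorphism $\phi:\tilde M\to S^n$. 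With this choice, $\sigma\circ\phi=\iota$ on $U$, the conformal factor $|\det d(\sigma\circ\phi)|$ is identically $1$ there, and the conformal transformation law forces $L_{\tilde g}^{-1}(\tilde x,\tilde y)=C_{n,m}\,d(\tilde x,\tilde y)^{2m-n}$ exactly on $U$; since in a flat region all higher heat coefficients $\Phi_j^L$ vanish, the subtracted term in Thm.~\ref{ThmConstantTerm} is also exactly $C_{n,m}\,d^{2m-n}$, so the mass is zero. You have the right ingredients (Kuiper's theorem, the stereographic formula), but you need to make this explicit choice of $\phi$ rather than take an arbitrary conformal identification with $S^n$; otherwise the assertion $\mathfrak m(\tilde x, L_{\tilde g})=0$ for $n$ even remains unproved.
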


\begin{remark}
In the even-dimensional case, one indeed has to require that the metric be constant near the point $x$, because under a generic conformal change, one can achieve any real number as the mass at $x$, as one can see by Thm.~\ref{ThmMassInvariance} below. In odd dimensions of course, on can drop this assumption, by Thm.~\ref{ThmLocalInvariants}. 
\end{remark}

\begin{remark}
In fact, the assumptions imply that $(M, g)$ is the quotient of a sphere by a finite group of $O(n)$. This is the positivity assumption that replaces the requirement of non-negative scalar curvature in the standard positive mass theorem.
\end{remark}

\begin{proof}
By \cite[Thm.~2.8]{BransonFunctionalDeterminant}, \cite[Thm.~1.2]{Gover}, the GJMS operator $L$ of order $2m$ on  $S^n$ with its round metric $g_{S^n}$ is given  by the formula
\begin{equation*}
  L = \prod_{k=1}^m ( \Delta_{g_{S^n}} + c_k), ~~~~~~~~~~~~ c_k = \frac{1}{4} (n+2k-2)(n-2k).
\end{equation*}
Since $2m < n$ and $n \geq 3$, each of the values $c_k$, $k=0, \dots, m$ is positive. Hence $L$ is a positive operator. We claim that its Green's function equals
\begin{equation} \label{GreensFunctionOnSphere}
  G_{g_{S^n}}(x, y) = \frac{\Gamma \left( \frac{n-2m}{2}\right)}{(2 \pi)^{\frac{n}{2}} 2^m (m-1)!} \bigl(1 - \cos d(x, y)\bigr)^{\frac{n-2m}{2}}.
\end{equation}
Namely, remember from Section~\ref{SectionRiesz} that the Green's function in $\R^n$ is given by
\begin{equation*}
  L^{-1}_{g_{\R^n}}(x, y) =  \frac{\Gamma \left( \frac{n-2m}{2}\right)}{(4 \pi)^{\frac{n}{2}} (m-1)!} |x- y|^{2m-n}.
\end{equation*}
 We now write $S^n = \{(\cos r, v \sin r) \in \R \times \R^n \mid r \in \R, v \in S^{n-1}\}$, and without loss of generality let $x = (1, 0)$ be the north pole so that $r = d(x, y)$ is the spherical distance from $x$. Then the stereographic projection from the antipodal point $-x$ is given by 
  \begin{equation*}
     \sigma: S^n \setminus \{-x\} \longrightarrow M, ~~~~~~~~ (\cos r, v \sin r) \longmapsto \frac{\sin r}{1+\cos r} v,
  \end{equation*}
  and we have $\sigma^* g_{\R^n} = u^2 g_{S^n}$, where $u(y) = (1+\cos r)^{-1}$.  Hence, since $(\sin r)^2  =(1-\cos r)(1+\cos r)$, we have
  \begin{equation*}
  \begin{aligned}
  G_{g_{S^n}}(x, y) &= \frac{\Gamma \left( \frac{n-2m}{2}\right)}{(2 \pi)^{\frac{n}{2}} 2^m \Gamma(m)}  \bigl(1 + \cos r\bigr)^{\frac{n-2m}{2}} \left|\frac{\sin r}{1+\cos r}\right|^{2m-n} \\
  &=u(x)^{\frac{n+2m}{2}}u(y)^{\frac{n-2m}{2}} G_{\R^n}\bigl(\sigma(x), \sigma(y)\bigr).
  \end{aligned}
  \end{equation*}
  We therefore obtain from the transformation law of $L$, that $G_{g_{S^n}}$ as defined by \eqref{GreensFunctionOnSphere} satisfies $L_{g_{S^n}} G_{g_{S^n}} = \delta_x$. We conclude that it must be the Green's function of $L$, for if $L^{-1}_{g_{S^n}}$ is the Green's function, then $L_{g_{S^n}}(G_{g_{S^n}} - L^{-1}_{g_{S^n}}) = 0$, so $G_{g_{S^n}} - L^{-1}_{g_{S^n}}$ is smooth by elliptic regularity and hence in the kernel of $L_{g_{S^n}}$ as an unbounded operator on $L^2(S^n)$. But this implies that $G_{g_{S^n}} - L^{-1}_{g_{S^n}}$ is identically zero, because $L_{g_{S^n}}$ is a positive operator as noted above. This finishes the proof of the claim.

  Let $\hat{M}$ be the universal cover of $M$, with metric  $\hat{g} := \pi^* g$, where $\pi : \hat{M} \rightarrow M$ is the projection. Now it is well-known that any simply-connected conformally flat admits a conformal map into the standard sphere, by a monodromy argument (compare \cite[Section 1]{SchoenYauConfFlat}), hence we obtain a conformal map $\phi: \hat{M} \rightarrow S^n$. Since $\hat{M}$ is compact (as $\pi_1(M)$ is finite), this map is surjective, hence a covering map. Since $S^n$ does not admit non-trivial coverings for $n \geq 2$, we conclude that $\phi$ must be a diffeomorphism. 
  
  We obtain that $\hat{g} = |\det d\phi|^{\frac{2}{n} } g_{S^n}$ (as $\phi$ is a conformal map), and by the conformal transformation law of the GJMS operator $L$, we have that $L_{g}$ is a positive operator on $\hat{M}$ since it is conformally equivalent to $S^n$ and we get that its Green's function is given by
  \begin{equation*}
  \begin{aligned}
     L^{-1}_{\hat{g}}(\hat{x}, \hat{y}) &= |\det d \phi(\hat{x})|^{\frac{2m-n}{2n}}|\det d\phi(\hat{y})|^{\frac{2m+n}{2n}} L^{-1}_{g_{S^n}}\bigl(\phi(\hat{x}), \phi(\hat{y})\bigr)\\
     &= |\det d (\sigma \circ \phi)(\hat{x})|^{\frac{2m-n}{2n}}|\det d(\sigma \circ  \phi)(\hat{y})|^{\frac{2m+n}{2n}} L^{-1}_{g_{\R^n}}\bigl((\sigma \circ \phi)(\hat{x}), (\sigma \circ \phi)(\hat{y})\bigr).
  \end{aligned}
  \end{equation*}
  for points $\hat{x}, \hat{y} \in \hat{M}$. Now fix $x \in M$ and $\hat{x} \in \hat{M}$ with $\pi(\hat{x}) = x$. If $n$ is odd, then
  \begin{equation*}
    \mathfrak{m}(\hat{x}, L_{\hat{g}}) = |\det d \phi(x)|^{\frac{2m-n}{n}} \mathfrak{m}\bigl(\phi(\hat{x}), L_{g_{S^n}}\bigr)
  \end{equation*}
  by Thm.~\ref{ThmLocalInvariants}. However, $\mathfrak{m}(\phi(\hat{x}), L_{g_{S^n}}) = 0$ in odd dimensions, as easily seen from the explicit formula \eqref{GreensFunctionOnSphere}. If $n$ is even, then we made the additional assumption that $g$ is flat near $x$, which implies that also $\hat{g}$ is flat near $\hat{x}$. After choosing an isometry $\iota$ from a neighborhood $U$ of $\hat{x}$ to a neighborhood of zero in $\R^n$, we can therefore choose the conformal map $\phi$ in such a way that $\phi|_U = \sigma^{-1}\circ \iota$ (any local immersion into $S^n$ extends uniquely to a global one). Then for $\hat{y} \in U$, $(\sigma \circ \phi)(\hat{y}) = \iota(\hat{y}) = 0$ and
  \begin{equation*}
    |\det d(\sigma \circ  \phi)(\hat{y})| = |\det d \iota(\hat{y})| = 1,
  \end{equation*}
  as $\iota$ is an isometry. Therefore, the constant term in the asymptotic expansion at $x$ is equal to the constant term of the asymptotic expansion of the Green's function of $\Delta^m$ in $\R^n$, which is zero. This shows that $\mathfrak{m}(\hat{x}, L_{\hat{g}}) = 0$ in all dimensions. 
  
  Finally, as is easily checked, the Green's function of $g$ is given by
  \begin{equation*}
     G_g(x, y) = \sum_{\gamma \in \pi_1(M)} G_{\hat{g}}(\hat{x}, \gamma \cdot \hat{y}),
  \end{equation*}
  hence
  \begin{equation*}
     \mathfrak{m}(x, L_g) = \mathfrak{m}(\hat{x}, L_{\hat{g}}) + \sum_{\gamma \in \pi_1(M) \setminus \{1\}} G_{\hat{g}}(\hat{x}, \gamma \cdot \hat{x}) > 0.
  \end{equation*}
  since $\mathfrak{m}(\hat{x}, L_{\hat{g}}) = 0$ and $G_{\hat{g}}$ is positive. The proof is now finished.
  \end{proof}

Let us make some final observations. Let $L$ be a conformally invariant $m$-Laplace type operator, acting on functions on an odd-dimensional Riemannian manifold $M$. Then by Thm.~\ref{ThmLocalInvariants} and by the behavior of the volume form under a conformal change, the $(0, 2)$-tensor
\begin{equation*}
  g_{\mathrm{can}} := |\mathfrak{m}(x, L_g)|^{\frac{2}{2m-n}} g
\end{equation*}
is conformally invariant. If $\mathfrak{m}(x, L_g) \neq 0$ for all $x \in M$, then $g_{\mathrm{can}}$ is even a metric on $M$. For example, if $L$ is the Yamabe operator, this is the case if $n = 3, 5$ or if $M$ is conformally flat, by the positive mass theorem. For higher order GJMS operators, Thm.~\ref{ThmPositiveMassGJMS} gives some additional situations in which $\mathfrak{m}(x, L_g) \neq 0$ for all $x \in M$.

In case of the Yamabe operator, Habermann \cite{Habermann} calls $g_{\mathrm{can}}$ the {\em canonical metric} associated to the conformal structure. It is the unique metric in the conformal class that has  mass constant equal to one.

\section{The conformal variation of $\zeta_g(1, x)$} \label{SectionProof}

This section is dedicated to calculate the conformal variation of $\zeta_g(1, x)$ in the direction of a conformal change, a result which implies Thm.~\ref{ThmLocalInvariants}. Throughout, we will use the following notation. For a smooth function $F$ on $\mathcal{C}$ (with respect to its Fréchet topology) with values in some finite-dimensional vector space (such as $F(g) = \zeta_g(s, x)$ or $F(g) = p_t^{g}(x, y)$), we define the derivative at $g$ in direction of $f \in C^\infty(M)$ by
\begin{equation*}
  \delta_f F(g) := \left.\frac{\dd}{\dd \varepsilon}\right|_{\varepsilon=0} F\bigl(e^{2\varepsilon f} g\bigr).
\end{equation*}
We now prove the following more refined version of Thm.~\ref{ThmLocalInvariants}.

    \begin{theorem} \label{ThmMassInvariance}
     Let $M$ be a compact manifold of dimension $n$ and let $L$ be a conformally invariant $m$-Laplace type operator with respect to the conformal class $\mathcal{C}$.
     \begin{enumerate}
      \item If $n$ is odd or $m > \frac{n}{2}$, then we have
     \begin{equation*}
     \begin{aligned}
         \delta_f  \zeta_g(1, x) = (2m-n) f(x) \zeta_g(1, x) - 4m\bigl[ L_g^{-1} f \Pi_g\bigr](x, x)
     \end{aligned}
     \end{equation*}
     where $\Pi_g$ is the projection onto the kernel of $L_g$.
     \item If $n$ is even and $m \leq \frac{n}{2}$, we have
     \begin{equation*}
       \begin{aligned}
         \delta_f \bigl\{\mathrm{f.p.}_{s=1} \zeta_g(s, x)\bigr\} &= (2m-n)  f(x) \,\mathrm{f.p.}_{s=1} \zeta_g(s, x) - 4m\bigl[ L_g^{-1} f \Pi_g\bigr](x, x) \\
         &~~~~~~~~~~~~~~~~~~~~~~~~~~~~~~~~~~~~~~~~~~~~~~~+  2m{Q}_{\frac{n}{2}-m} f(x)
       \end{aligned}
     \end{equation*}
     where ${Q}_{\frac{n}{2}-m}$ is a certain differential operator of order $n-2m$ mapping functions to endomorphisms of $\V$, given in Lemma~\ref{LemmaAAsymptotics} below.
     \end{enumerate}
   \end{theorem}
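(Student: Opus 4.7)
The plan is to reduce the computation to a one-parameter perturbation problem on the fixed Hilbert space $L^2(M,\V,dV_g)$ via a conjugation. For $h_\epsilon=e^{2\epsilon f}g$ let $U_\epsilon\varphi=e^{n\epsilon f/2}\varphi$ be the unitary $L^2(M,\V,dV_{h_\epsilon})\to L^2(M,\V,dV_g)$ and set $\tilde L_\epsilon:=U_\epsilon L_{h_\epsilon}U_\epsilon^{-1}=e^{-m\epsilon f}L_g e^{-m\epsilon f}$, which is self-adjoint on $L^2(dV_g)$ for every $\epsilon$. A direct Schwartz-kernel calculation gives $p_t^{\tilde L_\epsilon}(x,y)=e^{n\epsilon(f(x)+f(y))/2}\,p_t^{L_{h_\epsilon}}(x,y)$, and the same identity passes through the positive spectral part and the Mellin transform to the zeta functions: $\zeta_{\tilde L_\epsilon}(s,x)=e^{n\epsilon f(x)}\,\zeta_{h_\epsilon}(s,x)$. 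Differentiating at $\epsilon=0$ reduces the theorem to
\[
  \delta_f\zeta_g(s,x)=-nf(x)\zeta_g(s,x)+\partial_\epsilon|_{\epsilon=0}\,\zeta_{\tilde L_\epsilon}(s,x),
\]
and the virtue of this reformulation is that $\partial_\epsilon|_0\tilde L_\epsilon=-m(fL_g+L_gf)$ is manifestly symmetric in $f$, which is essential for the computation below.

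For part~(1), if $\ker L_g=0$ then differentiating $\tilde L_\epsilon\tilde L_\epsilon^{-1}=I$ gives $L_gD=m(fL_g+L_gf)L_g^{-1}$ where $D:=\partial_\epsilon|_0\tilde L_\epsilon^{-1}$, hence $D=m(L_g^{-1}f+fL_g^{-1})$; on the diagonal $[D](x,x)=2mf(x)\zeta_g(1,x)$, and combining with the volume-form contribution $-nf(x)\zeta_g(1,x)$ delivers the claimed $(2m-n)f(x)\zeta_g(1,x)$. When $\ker L_g\neq 0$ one first needs $\partial_\epsilon|_0\tilde\Pi_\epsilon$ for the projection onto $\ker\tilde L_\epsilon=e^{m\epsilon f}\ker L_g$. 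I would compute this from the contour integral $\tilde\Pi_\epsilon=-\frac{1}{2\pi i}\oint_\gamma(\tilde L_\epsilon-z)^{-1}\,dz$ around a small loop at $z=0$: inserting the Laurent expansion $(L_g-z)^{-1}=-\Pi_g/z+L_g^{-1}+O(z)$ and the first-order variation of $\tilde L_\epsilon$, the $1/z$-residue picks out $\partial_\epsilon|_0\tilde\Pi_\epsilon=m(f\Pi_g+\Pi_gf-2\Pi_gf\Pi_g)$ (the same result can be verified by direct perturbation of the Gram matrix of the varying basis $\{e^{m\epsilon f}\phi_j\}$).

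To extract $D$ in the presence of a nontrivial kernel, I would decompose $D$ into four blocks with respect to the splitting $H=\ker L_g\oplus\ker L_g^\perp$. Differentiating $\tilde L_\epsilon^{-1}\tilde\Pi_\epsilon=0$ at $\epsilon=0$ and using $L_g^{-1}\Pi_g=0$ yields $D\Pi_g=-L_g^{-1}\partial_\epsilon|_0\tilde\Pi_\epsilon=-mL_g^{-1}f\Pi_g$, and the self-adjointness of $\tilde L_\epsilon^{-1}$ then forces $\Pi_gD=-m\Pi_gfL_g^{-1}$; the kernel--kernel block $\Pi_gD\Pi_g$ is automatically zero, while the non-kernel block $(I-\Pi_g)D(I-\Pi_g)$ is determined by inverting $L_g$ on the $(I-\Pi_g)$-part of the differentiated identity $\tilde L_\epsilon\tilde L_\epsilon^{-1}=I-\tilde\Pi_\epsilon$. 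Assembling the four blocks gives
\[
  D=m\bigl(L_g^{-1}f+fL_g^{-1}\bigr)-2m\bigl(L_g^{-1}f\Pi_g+\Pi_gfL_g^{-1}\bigr),
\]
and since $L_g^{-1}$ is self-adjoint the Schwartz kernels of $L_g^{-1}f\Pi_g$ and $\Pi_gfL_g^{-1}$ are adjoints of one another, so on the diagonal (interpreted as a self-adjoint endomorphism of $\V_x$) the two mixed terms combine to $2\,[L_g^{-1}f\Pi_g](x,x)$; thus $[D](x,x)=2mf(x)\zeta_g(1,x)-4m[L_g^{-1}f\Pi_g](x,x)$, which together with the $-nf$ factor yields the formula of part~(1).

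For part~(2), when $n$ is even and $m\leq n/2$ the zeta function has a simple pole at $s=1$ coming from the term $j=\tfrac{n}{2}-m$ in the analytic continuation formula of Section~\ref{SectionMLaplaceZeta}. I would apply $\partial_\epsilon|_{\epsilon=0}$ to that formula term by term: the non-singular pieces together with the $t\geq 1$ part of the Mellin integral reproduce, at the finite part, exactly the computation of part~(1), while the only new contribution is the conformal derivative of the singular coefficient $\Phi_{n/2-m}^L(x,x)/[\Gamma(s)\cdot m(s-1)]$, whose finite part at $s=1$ is a purely local quantity $2m\,Q_{n/2-m}f(x)$. Since the Minakshisundaram--Pleijel coefficients $\Phi_j^L$ are local invariants built from finitely many jets of $g$, the operator $Q_{n/2-m}$ thus obtained is automatically a differential operator of order $n-2m$, which identifies it with the operator appearing in the theorem. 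The most delicate step throughout is the block decomposition in the third paragraph, where all four blocks contribute and only after evaluation on the diagonal do the mixed terms combine into the factor $-4m[L_g^{-1}f\Pi_g](x,x)$; in part~(2) the corresponding subtlety is the clean separation of the local contribution $Q_{n/2-m}f$ from the analytic manipulations of the finite part at the pole.
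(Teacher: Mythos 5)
Your approach is genuinely different from the paper's. The paper works through the Mellin transform of the heat kernel and the conformal variation formula for $p_t^g$ (Lemma~\ref{LemmaHeatKernelVariation}), integrating by parts in $t$ to produce the operator $Q_t$ of \eqref{DefQt}, whose small-$t$ asymptotics (Lemma~\ref{LemmaAAsymptotics}) then give the local anomaly term. You instead conjugate $L_{h_\epsilon}$ to the fixed Hilbert space $L^2(dV_g)$ and differentiate the pseudo-inverse and the projection $\tilde\Pi_\epsilon$ directly; your computation of the operator identity
\begin{equation*}
  D=\partial_\epsilon|_0\tilde L_\epsilon^{-1}
   = m\bigl(L_g^{-1}f+fL_g^{-1}\bigr)-2m\bigl(L_g^{-1}f\Pi_g+\Pi_gfL_g^{-1}\bigr)
\end{equation*}
via the contour-integral formula for $\partial_\epsilon|_0\tilde\Pi_\epsilon$ and the block decomposition against $\ker L_g\oplus(\ker L_g)^\perp$ is correct, and gives a clean algebraic derivation of the kernel-correction term $-4m[L_g^{-1}f\Pi_g](x,x)$ that the paper obtains only after a fairly involved $R\to\infty$ limit.

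However, there is a genuine analytic gap at the passage ``on the diagonal $[D](x,x)=2mf(x)\zeta_g(1,x)$''. When $m\le n/2$ the Schwartz kernel $L_g^{-1}(x,y)$ is singular on the diagonal, so the kernel of $D$, namely $m\bigl(f(x)+f(y)\bigr)L_g^{-1}(x,y)+\text{(kernel terms)}$, is singular too; ``$[D](x,x)$'' must mean the \emph{regularized} constant term in the diagonal expansion of $D(x,y)$. What you need is that this regularized value equals $\partial_\epsilon|_0\zeta_{\tilde L_\epsilon}(1,x)$, which amounts to showing that the $\epsilon$-variation of the \emph{singular} part of $\tilde L_\epsilon^{-1}(x,y)$ agrees, up to $o(1)$ as $y\to x$, with $m\bigl(f(x)+f(y)\bigr)$ times the singular part of $L_g^{-1}(x,y)$. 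This is not automatic: by Thm.~\ref{ThmAsymptoticExpansion} the singular part involves the heat coefficients $\Phi_j^{\tilde L_\epsilon}(x,y)$ and the Riesz distributions $I_\alpha^{h_\epsilon}(x,y)=C(\alpha,n)\,d_{h_\epsilon}(x,y)^{\alpha-n}$, both of which vary with $\epsilon$ (the latter because $\tilde L_\epsilon$ is an $m$-Laplace type operator for $h_\epsilon$, not $g$). Tracking this variation and verifying the required cancellation in odd dimensions is exactly what the paper does via $Q_t$ and Lemma~\ref{LemmaAAsymptotics}; your proposal simply asserts it. The same gap affects part~(2): your candidate for the anomaly, ``the conformal derivative of $\Phi_{n/2-m}^L(x,x)$'', is a priori a \emph{different} object from the operator $Q_{\frac{n}{2}-m}$ of Lemma~\ref{LemmaAAsymptotics}, which is a specific bilinear expression in products $\Phi_j\Phi_k$ of heat coefficients. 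If they coincide, that is an interesting identity worth recording, but it is not established by the argument as written, and without it the proposal does not prove the theorem as stated.
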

   
Notice that the projection $\Pi_g$ onto the kernel  is a smoothing operator. Therefore $L_g^{-1} f \Pi_g$ is a smoothing operator as well and hence has a smooth integral kernel, which can be evaluated on the diagonal (remember that $L^{-1}$ is by our convention the inverse of $L_g$ {\em on the orthogonal complement of its kernel}). The differential operator $Q_{\frac{n}{2}-m}$ is implicitly defined in Lemma~\ref{LemmaAAsymptotics} below and is a differential operator which only depends on the local geometry of $M$ and the coefficients of $L$ near $x$.

  \begin{remark}
   In particular, this shows that if $n$ is even and the function $f$ has a zero of order $n-2m+1$ or greater at $x \in M$, then $\zeta_g(1, x)$ remaines unchanged under the conformal change $h = e^{2f} g$.
   \end{remark}
   
   \begin{remark}
     It is not hard to show that in the presence of a non-trivial kernel, $\zeta_g(1, x)$ is indeed not conformally covariant, even in odd dimensions. For example, if $L$ is the Yamabe operator on a flat odd-dimensional torus $M$, one can choose $f$ to be a suitable finite linear combination of sines and cosines, and then use the explicit spectral decomposition of $L$ together with trigonometric identites to show that  $[ L_g^{-1} f \Pi_g](x, x) \neq 0$ at most points $x$.
   \end{remark}
   
   For convenience of the reader, we give a sketch of the proof in the case that $L_g$ is a positive operator, which is considerably easier. Namely, in that case, we have for $\mathrm{Re}(s)$ large enough that
   \begin{equation*}
     \zeta_g(s, x)  = \frac{1}{\Gamma(s)}\int_0^\infty t^{s-1} p_t(x, x) \dd t
   \end{equation*}
   and the formula for the behavior of the heat kernel under an infinitesimal conformal change (Lemma~\ref{LemmaHeatKernelVariation} below) shows that one has
   \begin{equation*}
   \begin{aligned}
     \delta_f \zeta_g(s, x) &= - n f(x) \zeta_g(s, x) - \frac{2m}{\Gamma(s)} \int_0^\infty t^{s-1} \int_0^t \int_M \frac{\partial}{\partial t} p^g_{t-u}(x, y) f(y) p_u^{g}(y, x) \,\dd V_g(y)\, \dd u \dd t\\
     &= (2m - n)f(x) \zeta_g(s, x) \\
     &~~~~~~~~+ (s-1) \frac{2m}{\Gamma(s)} \int_0^\infty t^{s-2} \int_0^t \int_M p^g_{t-u}(x, y) f(y) p_u^{g}(y, x) \,\dd V_g(y)\, \dd u \dd t,
     \end{aligned}
   \end{equation*}
   where we integrated by parts with respect to $t$. Lemma~\ref{LemmaDerivativeBound} below ensures that the differentiation under the integral sign is justified. If we plug $s=1$, the second term will disappear, provided that it does not have a pole at this value of $s$. It turns out (Lemma~\ref{LemmaAAsymptotics}) that there is no pole if $n$ is odd, while a pole is present in the case that $n$ is even, with residue ${Q}_{\frac{n}{2}-m} f(x)$. This finishes our sketch of the proof. 
   
   We now first state and prove the necessary lemmas and then give a complete proof of Thm.~\ref{ThmMassInvariance}. The main difficulty remaining is how to deal with the case of a non-positive operator.
   
\begin{lemma} \label{LemmaHeatKernelVariation}
  Let $p_t^g(x, y)$ be the heat kernel of $L_g$. Then we have
  \begin{equation} \label{PointwiseVariationOfHeatKernel}
   \delta_f p_t^g(x, x) = - nf(x) p_t^g(x, x) - 2m\int_0^t \int_M \frac{\partial}{\partial t} p^g_{t-s}(x, y) f(y) p_s^{g}(y, x) \,\dd V_g(y)\, \dd s .
  \end{equation}
  for any $t>0$.
\end{lemma}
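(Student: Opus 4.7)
Set $g_\varepsilon := e^{2\varepsilon f} g$, $L_\varepsilon := L_{g_\varepsilon}$, $P_t^\varepsilon := e^{-tL_\varepsilon}$. Since $V_{g_\varepsilon} = e^{n\varepsilon f} V_g$, the integral kernel of $P_t^\varepsilon$ with respect to the fixed density $V_g$ equals
\begin{equation*}
  \tilde p_t^\varepsilon(x, y) := e^{n\varepsilon f(y)} p_t^\varepsilon(x, y).
\end{equation*}
Differentiating at $\varepsilon=0$ gives $\delta_f p_t^g(x, x) = \delta_f \tilde p_t(x, x) - n f(x) p_t^g(x, x)$, so everything reduces to computing $\delta_f \tilde p_t(x, x)$.

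My plan is Duhamel's principle on the family $P_t^\varepsilon$ viewed as operators on $C^\infty(M, \V)$. For smooth $v$, the function $P_t^\varepsilon v$ is smooth in $(\varepsilon, t)$ and satisfies $(\partial_t + L_\varepsilon) P_t^\varepsilon v = 0$; differentiating in $\varepsilon$ at $0$ shows that $\delta_f P_t v$ solves the inhomogeneous heat equation with source $-(\delta_f L_g) P_t^g v$ and vanishing initial data, whence
\begin{equation*}
  \delta_f \tilde p_t(x, y) = -\int_0^t \int_M p_{t-s}^g(x, z) \bigl[(\delta_f L_g)_z p_s^g(z, y)\bigr]\, \dd V_g(z)\, \dd s.
\end{equation*}
Differentiating the conformal transformation rule \eqref{ConformalChangeOfL} at $\varepsilon = 0$ yields
\begin{equation*}
  \delta_f L_g = \tfrac{n-2m}{2}\, L_g f - \tfrac{n+2m}{2}\, f L_g,
\end{equation*}
with $f$ denoting multiplication.

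Insert this into the Duhamel formula, set $y = x$, and use self-adjointness of $L_g$ on $L^2(M, V_g)$ to move $L_g$ off $p_s^g(z, x)$ onto $p_{t-s}^g(x, z)$ in the first summand. The heat equation $(L_g)_z p_\tau(x, z) = -\partial_\tau p_\tau(x, z)$ combined with $\partial_s p_{t-s} = -\partial_t p_{t-s}$ yields $(L_g)_z p_{t-s}^g(x, z) = -\partial_t p_{t-s}^g(x, z)$ and $(L_g)_z p_s^g(z, x) = -\partial_s p_s^g(z, x)$, so the integrand becomes
\begin{equation*}
  -\tfrac{n-2m}{2}\, \partial_t p_{t-s}^g(x, z)\, f(z)\, p_s^g(z, x)\; +\; \tfrac{n+2m}{2}\, p_{t-s}^g(x, z)\, f(z)\, \partial_s p_s^g(z, x).
\end{equation*}
Integrating the second summand by parts in $s\in[0,t]$ produces boundary terms at $s=0$ and $s=t$ which, using $p_0 = \mathrm{id}$, both reduce to $f(x) p_t^g(x, x)$ and therefore cancel; the remaining bulk integral becomes another copy of $\int_0^t \int \partial_t p_{t-s}\, f\, p_s$, again via $\partial_s p_{t-s} = -\partial_t p_{t-s}$. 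Adding the two contributions yields the coefficient $\tfrac{n+2m}{2} - \tfrac{n-2m}{2} = 2m$, which combined with the initial reduction produces exactly \eqref{PointwiseVariationOfHeatKernel}.

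The only real subtlety is justifying the operator-level Duhamel step: $L_\varepsilon$ is unbounded and the Hilbert space $L^2(M, V_\varepsilon)$ itself depends on $\varepsilon$. The cleanest way to sidestep this is to carry out the entire differentiation inside $C^\infty(M, \V)$, exploiting that $p_t^\varepsilon(x, y)$ is jointly smooth in $(t, x, y)$ for $t > 0$ and depends smoothly on the coefficients of $L_\varepsilon$ by parabolic regularity, so that all differentiations under the integral sign and exchanges of limits used above are routine.
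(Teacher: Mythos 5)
Your proposal is correct and follows essentially the same route as the paper: Duhamel's principle for $\delta_f e^{-tL_g}$, the factor $e^{n\varepsilon f}$ from the variation of the volume density producing the $-nf(x)p_t^g(x,x)$ term, and then self-adjointness of $L_g$, the heat equation, and an integration by parts in $s$ (with cancelling boundary terms) at the diagonal; the paper merely organizes $\delta_f L_g$ as $-\tfrac{n}{2}[f,L_g]-m\{f,L_g\}$, whose commutator part drops out on the diagonal, instead of your direct treatment of $\tfrac{n-2m}{2}L_g f-\tfrac{n+2m}{2}fL_g$. Just make sure the overall minus sign from the Duhamel formula is carried along when you state "the integrand becomes\dots", so that the combined coefficient is $-2m$ as in \eqref{PointwiseVariationOfHeatKernel}; with that reading your bookkeeping is consistent.
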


This result can be found for Laplace type operators in \cite[Lemma~1.1]{ParkerRosenberg} or \cite[Thm.~2.48]{bgv}. See also Prop.~3.5 in \cite{BransonOrsted}. Since the result seems not to be available in the literature in quite the generality and form given here, we give a full proof, for convenience of the reader.

\begin{proof}
Notice that by the formula \eqref{ConformalChangeOfL} for the conformal change of $L$, we have
\begin{equation} \label{VariationOfL}
\begin{aligned}
 \delta_f L_g &= \left. \frac{\partial}{\partial \varepsilon}\right|_{\varepsilon=0} \!\!\!\!\! e^{-\frac{n+2m}{2}\varepsilon f} L_g  e^{\frac{n-2m}{2}\varepsilon f} = - \frac{n+2m}{2} f  L_g + \frac{n-2m}{2} L_g  f\\
  &= -\frac{n}{2} [f, L_g] - m \{f, L_g\}
\end{aligned}
\end{equation}
in terms of commutator and anti-commutator.
  Let $e^{-tL_g}$ be the solution operator to the heat equation given by $L_g$. Differentiating the defining differential equation $\frac{\partial}{\partial t} e^{-tL_g} + L_g e^{-tL_g} = 0$, $e^{-tL_g}|_{t=0} = u$, gives
  \begin{equation*}
    \frac{\partial}{\partial t} (\delta_f e^{-tL_g}) + L_g (\delta_f e^{-tL_g}) = -(\delta_f L_g) e^{-tL}, ~~~~~~~ \delta_f e^{-tL_g}|_{t=0} = 0
  \end{equation*}
  Using \eqref{VariationOfL}, the Duhamel principle (i.e.\ uniqueness of solutions to the heat equation) implies therefore that $\delta_f e^{-tL_g}$ is given by
  \begin{equation} \label{DuhamelFormula}
    \delta_f e^{-tL} = \int_0^t e^{-(t-s)L_g}\left( \frac{n}{2} [f, L_g] + m \{f, L_g\} \right) e^{-sL_g}\dd s
  \end{equation}
  On the other hand, we have $V_{g_\varepsilon} = e^{n\varepsilon f} V_g$, hence
  \begin{equation*}
  \begin{aligned}
    (\delta_f e^{-tL_g})u(x) &= \left.\frac{\partial}{\partial \varepsilon}\right|_{\varepsilon=0} \int_M p_t^{g_\varepsilon}(x, y) u(y) \dd V_{g_\varepsilon}(y) \\
    &=  \int_M \delta_f p_t^g(x, y) u(y) \dd V_g(y) +  n \int_M p_t^g(x, y) u(y) f(y) \dd V_g(y).
    \end{aligned} 
    \end{equation*}
    Plugging this into \eqref{DuhamelFormula} yields the equality of kernels
    \begin{equation*}
      \delta_f p_t^g(x, y) = -n f(y) p_t^g(x, y) + \int_0^t \int_M p_{t-s}^g(x, z) \left( \frac{n}{2} [f, L_g]_z + m \{f, L_g\}_z \right)p_s^g(z, y) \dd V_g(z) \dd s.
    \end{equation*}
    If one evaluates this at the diagonal (i.e.\ sets $y = x$), then integrating by parts shows that the term involving $[f, L_g]$ does not contribute. Using furthermore that $p_t^g(x, y)$ satisfies the heat equation finishes the proof.
    \end{proof}


\begin{lemma} \label{LemmaDerivativeBound}
  For any $\varepsilon >0$ smaller than the smallest eigenvalue of $L_g$, there exists a constant $C>0$ such that
  \begin{equation} \label{ExponentialEstimate}
     \bigl| p_t^+(x, y) \bigr|, \bigl| \delta_f p_t^+(x, y)\bigr| \leq C e^{-t\varepsilon}
  \end{equation}
  for all $x, y \in M$ and $t\geq  1$.
\end{lemma}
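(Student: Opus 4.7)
Let $\lambda_* > 0$ denote the smallest positive eigenvalue of $L_g$, so $\lambda_* > \varepsilon$ by hypothesis.

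For the bound on $p_t^+$, I start from the spectral series
\begin{equation*}
p_t^+(x,x) = \sum_{\lambda_j > 0} e^{-t\lambda_j}\,|\phi_j(x)|^2
\end{equation*}
and split $e^{-t\lambda_j} = e^{-\varepsilon t}\cdot e^{-(\lambda_j-\varepsilon)t}$. For $t \geq 1$ and $\lambda_j > \varepsilon$, I bound $e^{-(\lambda_j-\varepsilon)t}\leq e^{-(\lambda_j-\varepsilon)}$, giving
\begin{equation*}
p_t^+(x,x) \leq e^{-\varepsilon(t-1)}\,p_1^+(x,x),
\end{equation*}
which is uniformly bounded on $M$ since $p_1^+ = p_1^g - p_1^{\leq 0}$ is continuous (the non-positive part being a finite sum of smooth eigenfunction products plus the smooth kernel projection $\Pi_g$). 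The off-diagonal bound follows from the Cauchy--Schwarz inequality $|p_t^+(x,y)|^2 \leq p_t^+(x,x)\,p_t^+(y,y)$, obtained from the factorization $p_t^+(x,y) = \langle p_{t/2}^+(x,\cdot),\,p_{t/2}^+(y,\cdot)\rangle_{L^2(M,\V)}$.

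For the bound on $\delta_f p_t^+$, I view $p_t^+$ as the integral kernel of $U_t := e^{-tL_g}\Pi_+$, where $\Pi_+$ denotes the projection onto the positive spectral subspace of $L_g$. Since $\Pi_{\leq 0} := \mathrm{Id}-\Pi_+$ has finite rank and the positive spectrum is separated from the rest by the gap $(0,\lambda_*)$, $\Pi_+$ depends smoothly on $g$ through a Riesz projection integral around a bounded contour enclosing only the non-positive eigenvalues, so that $\delta_f\Pi_+ = -\delta_f\Pi_{\leq 0}$ is finite-rank. Differentiating $\partial_t U_t = -L_g U_t$ and applying Duhamel gives
\begin{equation*}
\delta_f U_t = e^{-tL_g}\,\delta_f\Pi_+ \;-\; \int_0^t e^{-(t-s)L_g}(\delta_f L_g)\,U_s\,ds.
\end{equation*}
Inserting $\delta_f L_g = -\tfrac{n}{2}[f,L_g] - m\{f,L_g\}$ (from the proof of Lemma~\ref{LemmaHeatKernelVariation}), the commutator part is handled by integration by parts using $\partial_s\bigl[e^{-(t-s)L_g} f e^{-sL_g}\bigr] = e^{-(t-s)L_g}[L_g,f]e^{-sL_g}$, producing boundary terms of the form $\Pi_+ f U_t - U_t f\Pi_+$, both of which decay at rate $\lambda_*$ by Step~1. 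The anti-commutator part, which formally contains an unbounded $L_g$, together with the residual $e^{-tL_g}\delta_f\Pi_+$, yields an $L^2\to L^2$ operator bound of order $e^{-\varepsilon t}$ for $t \geq 1$: the growing contributions from the two pieces cancel, as is cleanest to see by representing $\Pi_+ = \chi(L_g)$ for a smooth cutoff $\chi$ that vanishes on a neighborhood of the non-positive spectrum and applying the Helffer--Sj{\"o}strand formula for the perturbation of $e^{-tL_g}\chi(L_g)$. Finally, operator bounds are promoted to pointwise $L^\infty$ kernel bounds via the factorization $U_t = (e^{-L_g/2}\Pi_+)\circ U_{t-1}\circ(e^{-L_g/2}\Pi_+)$ for $t \geq 1$, whose outer factors have smooth, uniformly bounded kernels.

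\textbf{Main obstacle.} The central difficulty is controlling the anti-commutator $\{f,L_g\}$ appearing inside the Duhamel integral for $\delta_f U_t$: this term formally contains the unbounded operator $L_g$, and direct integration by parts only swaps $fL_g$ for $L_g f$ without eliminating the unbounded factor. The resolution requires either a spectral-calculus argument (realizing $\Pi_+$ as a smooth function of $L_g$ and invoking a Helffer--Sj{\"o}strand-type perturbation formula, where the almost analytic extension of the cutoff absorbs the $L_g$-growth) or explicit bookkeeping of the cancellation between this term and the $\Pi_{\leq 0}$-valued piece of $e^{-tL_g}\delta_f\Pi_+$. The conversion of $L^2$-operator estimates into pointwise $L^\infty$ kernel estimates uniform over $M\times M$ is a secondary technical issue, handled by sandwiching with the smooth-kernel factors $e^{-L_g/2}\Pi_+$.
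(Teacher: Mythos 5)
Your Step 1 is correct: the splitting $e^{-t\lambda_j}\le e^{-\varepsilon(t-1)}e^{-\lambda_j}$ for $t\ge1$, the continuity of $p_1^+=p_1^g-p_1^{\le0}$, and the Cauchy--Schwarz/semigroup factorization give the first bound by a more elementary route than the paper (which instead uses $\|U_t\|_{H^r\to H^s}\le C_{r,s}e^{-\varepsilon t}$ together with $L^1\hookrightarrow H^{-\frac{n+1}{2}}$, $H^{\frac{n+1}{2}}\hookrightarrow L^\infty$). The real problem is in Step 2. You apply Duhamel to $U_t=e^{-tL_g}\Pi_+$ and get $\delta_f U_t=e^{-tL_g}\delta_f\Pi_+-\int_0^t e^{-(t-s)L_g}(\delta_f L_g)U_s\,ds$, in which the left-hand heat factors are \emph{not} spectrally projected: they contain the kernel of $L_g$ and any negative eigenvalues, so individual terms fail to decay and can even grow like $e^{(t-s)|\lambda_j|}$. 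Your estimate then rests on the assertion that ``the growing contributions from the two pieces cancel'' --- but that cancellation is essentially the content of the lemma and is never exhibited; the appeal to $\Pi_+=\chi(L_g)$ and a Helffer--Sj\"ostrand perturbation formula is a pointer to a different argument that is not carried out (one would still have to push the unbounded perturbation $\delta_f L_g$, of order $2m$, through the resolvent integral, track the $e^{-\varepsilon t}$ decay through the almost-analytic extension, and then convert operator bounds into kernel bounds). Also your stated boundary term for the commutator part is not what your own decomposition produces: it is $fU_t-e^{-tL_g}f\Pi_+$, and the second summand again contains the non-decaying piece $e^{-tL_g}\Pi_{\le0}f\Pi_+$.

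The missing idea, which is the paper's, is to sandwich with the projection on \emph{both} sides: since $\Pi_+$ is idempotent and commutes with $e^{-tL_g}$, one has $U_t=\Pi_+e^{-tL_g}\Pi_+$, hence $\delta_f U_t=\int_0^t U_{t-s}\bigl(\tfrac n2[f,L_g]+m\{f,L_g\}\bigr)U_s\,ds+U_t(\delta_f\Pi_+)+(\delta_f\Pi_+)U_t$. Now every heat factor is a projected $U$, and the spectral estimate $\|U_s\|_{H^r\to H^s}\le C_{r,s}e^{-\varepsilon s}$ with arbitrary gain of derivatives absorbs the unbounded middle operator: $\bigl(\tfrac n2[f,L_g]+m\{f,L_g\}\bigr)U_s$ is bounded between Sobolev spaces with norm $O(e^{-\varepsilon s})$ because $U_s$ gains the $2m$ derivatives the differential operator loses. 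So the anti-commutator, which you single out as the central difficulty, becomes harmless, and no cancellation between divergent terms is needed; the remaining terms decay because $\delta_f\Pi_+$ is a fixed finite-rank smoothing operator (your Riesz-contour remark is a fine way to justify its existence). Finally, before concluding, note that the kernel of $\delta_f U_t$ is $\delta_f p_t^+(x,y)+nf(y)p_t^+(x,y)$, since the volume density varies as well; this extra term is controlled by Step 1, but it must be accounted for.
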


\begin{proof}
For any $s \in \R$, the norm defined by
\begin{equation*}
   \|u\|_{H^s}^2 := \|L_g^{\frac{s}{2m}}\|_{L^2}^2 + \|\Pi_g u\|_{L^2}^2,
\end{equation*}
where $\Pi_g$ is the projection onto the kernel of $L_g$, is an equivalent $H^s$ Sobolev norm. Let $\Pi^+_g$ denote the orthogonal projection in $L^2(M, \V)$ onto the positive spectral part of $L_g$. Set $U_t := e^{-tL_g} \Pi^+_g$. Then $p_t^+(x, y)$ is the integral kernel of $U_t$. If now $u \in H^s(M, \V)$ has the decomposition $u = \sum_{j=1}^\infty u_j \varphi_j$ in terms of the eigendecomposition of $L$, we have
\begin{equation*}
  U_t u = \sum_{\lambda_j >0 } e^{-t\lambda_j} u_j \varphi_j
\end{equation*} 
and 
\begin{equation*}
   \|U_t u\|_{H^{s}}^2 = \sum_{\lambda_j>0} e^{-2 t \lambda_j} \lambda_j^{\frac{s}{m}} |u_k|^2 \leq \left( \sup_{\lambda_j>0} e^{-2t \lambda_j} \lambda_j^{\frac{s-r}{m}} \right) \|u\|_{H^r}^2 \leq C_{r, s}^2 e^{-2\varepsilon t} \|u\|_{H^r}^2,
\end{equation*}
for any $t \geq 1$, where one can choose $\varepsilon >0$ smaller than the smallest positive eigenvalue of $L$ freely and then
\begin{equation*}
  C_{r, s} := \sup_{\lambda_j >0} e^{-(\lambda_j-\varepsilon)} \lambda_j^{\frac{s-r}{2m}}.
\end{equation*}
It is well known for operators $A$ with a continuous integral kernel that the $\sup$ norm of the integral kernel equals the operator norm $\| A\|_{L^1, L^\infty}$. Since $L^1$ embeds continuously into $H^{-\frac{n+1}{2}}$, and $H^{\frac{n+1}{2}}$ embeds continuously into $L^\infty$, we have
\begin{equation*}
\bigl|p_t^+(x, y)\bigr| \leq \|U_t\|_{L^1, L^\infty} \leq B\|U_t\|_{H^{-\frac{n+1}{2}}, H^{\frac{n+1}{2}}} \leq B C_{-\frac{n+1}{2}, \frac{n+1}{2}} e^{-t\varepsilon}
\end{equation*}
for some constant $B>0$.

It remains to obtain the same result for $\delta_f p_t^+(x, y)$. Observe that since $L_g$ has only finitely many non-positive eigenvalues, the operator $\id - \Pi^+_g$ is a smoothing operator of finite rank, and we have $\delta_f \Pi^+_g = - \delta_f(\id - \Pi^+_g)$, so $\delta_f \Pi^+_g$ is also a smoothing operator of finite rank. Now using the Duhamel formula \eqref{DuhamelFormula} on $\delta_f U_t$, we obtain
\begin{equation} \label{DuhamelSecond}
\begin{aligned}
   \delta_fU_t &= \delta_f(\Pi^+_g e^{-tL} \Pi^+_g) = \Pi^+_g (\delta_f e^{-tL}) \Pi^+_g + U_t (\delta_f \Pi^+_g) + (\delta_f \Pi^+_g) U_t\\
   &= \int_0^t U_{t-s} \left( \frac{n}{2} [f, L_g] + m \{f, L_g\} \right) U_s\dd s + U_t (\delta_f \Pi^+_g) + (\delta_f \Pi^+_g) U_t,
\end{aligned}
\end{equation}
where we used several times that $\Pi^+_g$ is idempotent and commutes with $e^{-tL}$. Since $L_g$ is continuous from $H^s$ to $H^{s-2}$ for all $s \in \R$, we obtain
\begin{equation*}
\begin{aligned}
 \left\| \left(\frac{n}{2} [f, L_g] + m \{f, L_g\} \right) U_s\right\|_{H^r, H^s} &\leq D_{s} \|U_s\|_{H^s, H^{r+2}}&\leq D_{s}  C_{r+2, s} e^{-s\varepsilon}.
 \end{aligned}
\end{equation*}
Consequently, by \eqref{DuhamelSecond}, 
\begin{equation*}
\begin{aligned}
 \|\delta_f U_t\|_{H^r, H^s} &\leq \int_0^t \|U_{t-s}\|_{H^{s}, H^s} \left\|\left( \frac{n}{2} [f, L_g] + m \{f, L_g\} \right) U_s\right\|_{H^r, H^{s}}\dd s \\
 &~~~~~~~~~~~~~~~~~~~~~~~~~~~~~~~~+ C(r, s) \underbrace{\bigl(\|\delta_f \Pi^+_g\|_{H^r, H^r} + \|\delta_f \Pi^+_g\|_{H^s, H^s}\bigr)}_{:=E_{r, s}} e^{-t\varepsilon}\\
 &\leq D_s C_{s, s} C_{r+2, s}\int_0^t e^{-(t-s)\varepsilon} e^{-s\varepsilon} \dd s + E_{r, s} C_{r, s} e^{-t\varepsilon}= F_{r, s} e^{-t\varepsilon},
\end{aligned}
\end{equation*}
where $F_{r, s}>0$ is suitably chosen. Let $q_t(x, y)$ be the integral kernel of $\delta_f U_t$. Then
\begin{equation*}
 |q_t(x, y)| \leq \|\delta_f U_t\|_{L^1, L^\infty} \leq B \|\delta_f U_t\|_{H^{-\frac{n+1}{2}}, H^{\frac{n+1}{2}}} \leq B F_{-\frac{n+1}{2}, \frac{n+1}{2}}e^{-t\varepsilon}.
\end{equation*}
However, as seen in the proof of Lemma~\ref{LemmaHeatKernelVariation}, we have $q_t(x, y) = \delta_f p_t^+(x, y) + nf(y) p_t^+(x, y)$. The lemma now follows with the estimate on $p_t^+(x, y)$ from before.
\end{proof}

The following lemma is the key to the proof of Thm.~\ref{ThmMassInvariance} and the source for the term $Q_{\frac{n}{2}-m}f$ in the variation formula.

\begin{lemma} \label{LemmaAAsymptotics}
  Let $Q_t$ be the integral operator that sends functions $f \in C^\infty(M)$ to sections $Q_tf \in C^\infty(M, \End(\V))$ given by
    \begin{equation} \label{DefQt}
     (Q_tf)(x) := \frac{1}{t} \int_0^t \int_M p^g_{t-u}(x, y) f(y) p_u^{g}(y, x) \,\dd V_g(y)\, \dd u.
   \end{equation}
   Then  $(Q_t f)(x)$ has an asymptotic expansion as $t\rightarrow 0$, given by
   \begin{equation*}
      (Q_tf)(x) \sim \sum_{j=0}^\infty t^{\frac{j}{m}- \frac{n}{2m}} {Q}_j f(x),
   \end{equation*}
   where each $Q_j$ is a differential operator of degree $2j$ that takes functions to sections of $\End(\V)$. The coefficients of the operators ${Q_j}$ depend on the local geometry of $M$ and the coefficients of the operator $L$ near $x$. In particular, if $\V = \underline{\R}$, the trivial line bundle, then $Q_j$ is a $j$-Laplace type operator, up to a constant factor.
  \end{lemma}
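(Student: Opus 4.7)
The plan is to substitute the Minakshisundaram-Pleijel expansion \eqref{HeatKernelExpansion} into both heat-kernel factors in the definition \eqref{DefQt} of $Q_t$, Taylor-expand all remaining smooth data around $x$ in normal coordinates, and then read off the powers of $t$ using the scaling identity $\e_s^m(v) = s^{-n/(2m)}\e_1^m(s^{-1/(2m)} v)$, which is immediate from the Fourier representation in \eqref{FormulaEm}. First I would localize: outside any normal-coordinate ball $U = \exp_x(B_\rho)$, the off-diagonal decay of $p_s^g$ (following from \eqref{HeatRemainderEstimate} together with the Schwartz decay of $\e_s^m$ away from the origin, obtained by repeated integration by parts in \eqref{FormulaEm}) makes the contribution to $(Q_tf)(x)$ of order $O(t^\infty)$, so we may restrict attention to $U$. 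The substitution $u = t\tau$ cancels the prefactor $1/t$ and leaves an integral over $\tau \in [0,1]$ and $v \in B_\rho \subset T_x M$.

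On $U$, inserting \eqref{HeatKernelExpansion} into both factors through order $N$ and Taylor-expanding $f(\exp_x v)$, the exponential Jacobian $j(x,v)$, and the coefficients $\Phi_j^L(x, \exp_x v)$, $\Phi_k^L(\exp_x v, x)$ in powers of $v$ up to order $2N$ yields, modulo controlled remainders, a finite sum of expressions of the form
\[
  A_{\alpha,j,k}(x)\,\partial^\alpha f(x) \int_0^1\!\!\int_{\R^n} v^\alpha\,[t(1-\tau)]^{j/m}[t\tau]^{k/m}\,\e_{t(1-\tau)}^m(v)\,\e_{t\tau}^m(v)\,dv\,d\tau,
\]
where $A_{\alpha,j,k}(x) \in \End(\V_x)$ depends only on finitely many Taylor coefficients at $x$ of $g$ and of the coefficients of $L$, and where the domain of integration has been extended from $B_\rho$ to $\R^n$ at the cost of an exponentially small error. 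Substituting $v = t^{1/(2m)} w$ and applying the scaling identity reduces each such inner integral to a $t$-independent constant times $t^{(2j+2k+|\alpha|)/(2m) - n/(2m)}$. Collecting all triples $(\alpha,j,k)$ with $2j+2k+|\alpha|=2\ell$ produces a contribution of the form $t^{\ell/m - n/(2m)} Q_\ell f(x)$, where $Q_\ell$ is a differential operator of order $2\ell$ taking functions to sections of $\End(\V)$ whose coefficients depend only on the germs of $g$ and of $L$ at $x$.

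The truncation errors from both the heat-kernel expansion and the Taylor expansions are controlled uniformly in $\tau \in [0,1]$ by \eqref{HeatRemainderEstimate} and the Schwartz decay of $\e_1^m$, giving an overall error of order $t^{(N+1)/m - n/(2m)}$ and hence the claimed asymptotic expansion. In the scalar case $\V = \underline{\R}$ we have $\Phi_0^L(x,x) = 1$, and spherical symmetry of $\e_1^m$ forces all integrals with $|\alpha|$ odd to vanish. The principal part of $Q_\ell$ comes from the unique triple $j=k=0$, $|\alpha|=2\ell$, and spherical symmetry renders the corresponding $w^\alpha$-moment a nonzero scalar multiple of the totally symmetric tensor built from $\delta_{ij}$, which is precisely the principal symbol of $\Delta^\ell$; hence $Q_\ell$ is an $\ell$-Laplace type operator up to a constant. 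The main obstacle in implementing this plan is the combinatorial bookkeeping of the Taylor coefficients, together with uniform control of the joint remainder in $\tau$ and $v$ near the degenerate endpoints $\tau = 0,1$ (where the cancellation between the explicit $\tau^{k/m - n/(2m)}(1-\tau)^{j/m - n/(2m)}$ factors and the inner $w$-integral must be exploited); these are of the same character as the estimates underlying the Minakshisundaram-Pleijel expansion itself, and succumb to the methods already in force in Section~\ref{SectionMLaplaceZeta}.
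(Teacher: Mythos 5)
Your proposal follows the same overall scheme as the paper's proof: rescale $u = t\tau$, localize near the diagonal using off-diagonal heat kernel decay, insert the Minakshisundaram--Pleijel expansion for both factors, pull back to $T_xM$, Taylor-expand the smooth data, rescale $v \mapsto t^{1/(2m)}v$, and collect powers of $t$. The one genuine difference is the computational vehicle for the inner integral: you stay in position space and invoke the homogeneity $\e_s^m(v) = s^{-n/(2m)}\e_1^m(s^{-1/(2m)}v)$ directly, whereas the paper passes to the Fourier side, writing everything in terms of $\gamma^m_\theta(\xi) := e^{-\theta|\xi|^{2m}}$ and recognizing the $v$-integral as a value of the convolution $D^\alpha\gamma^m_{1-\theta} * \gamma^m_\theta$ at the origin. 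The two are equivalent, but the Fourier representation buys a cleaner handling of the $\tau$-integral near $\tau = 0, 1$: after rescaling in position space, the integrand carries a singular prefactor $((1-\tau)\tau)^{-n/(2m)}$ that is only cancelled by compensating decay of the $w$-integral, which you correctly flag as the delicate point, whereas $(D^\alpha\gamma^m_{1-\theta} * \gamma^m_\theta)(0)$ is manifestly bounded on $[0,1]$. So your route is fine but would require filling in precisely the endpoint estimate you point to; the paper's Fourier detour sidesteps it.

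Two small points. First, your claim that the leading $w^\alpha$-moments sum to a \emph{nonzero} multiple of the symmetric $\delta$-tensor is not obvious: $\e_1^m$ is not positive for $m > 1$, so positivity cannot be invoked, and the paper does not attempt to establish nonvanishing either (the lemma's phrase ``up to a constant factor'' allows the constant to be anything). The paper instead argues from $O(n)$-invariance of $Q_j$ that the principal symbol, whatever its scalar coefficient, must be a multiple of $|\xi|^{2j}\cdot\id$; that is the more robust phrasing. Second, when you Taylor-expand you should also include the factor $\det(g_{ij})^{1/2}$ from rewriting $\dd V_g$ in normal coordinates (you mention the exponential Jacobian, which is the same thing in the paper's notation), and the $\Phi^L$ must be expanded jointly in both arguments; the paper bundles these into a single function $\phi_{jk}$, which keeps the bookkeeping from exploding.
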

  
  \begin{proof}
  The substitution $u= t\theta$ yields
  \begin{equation*}
    (Q_tf)(x) = \int_0^1 \int_M p^g_{t(1-\theta)}(x, y) f(y) p_{t\theta}^{g}(y, x) \,\dd V_g(y)\, \dd \theta.
   \end{equation*}  
   Because the heat kernel for $m$-Laplace type operators decays exponentially away from the diagonal (see for example \cite{GreinerHeatEquation} Lemma~1.2.4), the asymptotic expansion is the same when we replace $p_t^g(x, y)$ by $p_t^g(x, y) \chi(d(x, y))$ in the definition of $Q_t$, where $\chi: [0, \infty) \rightarrow [0, 1]$ is a smooth function with $\chi(r) = 1$ for $r \leq R/3$ and $\chi(r) = 0$ for $r \geq 2R/3$, where $R$ is the injectivity radius of $M$. Using the heat kernel asymptotic expansion \eqref{HeatKernelExpansion}, one then has for any large $N$ that up to order $\frac{N+1}{m} - \frac{n}{2m}$ in $t$, $(Q_tf)(x)$ has the same asymptotic expansion as
    \begin{equation*}
    \begin{aligned}
      \sum_{j+k \leq N} \frac{t^{\frac{j+k}{m}}}{\Gamma\left(\frac{j}{m}+1\right)\Gamma\left(\frac{k}{m}+1\right)} \int_0^1 (1-\theta)^j\theta^k \int_{M}\e_{t(1-\theta)}^m(x, y) \e_{t\theta}^m(x, y)  \tilde{\phi}_{jk}(y) \dd V_g(y) \, \dd \theta
      \end{aligned}
    \end{equation*} 
    where 
    \begin{equation*}
    \tilde{\phi}_{jk}(y) := \chi\bigl(d(x, y)\bigr)^2 \Phi_j^g(x, y) \Phi_k^g(y, x) f(y).
    \end{equation*} 
    We are left to calculate the asymptotic expansions of
    \begin{equation*}
      q_{jk}(t, x) :=  \int_0^1 (1-\theta)^j\theta^k \int_{M}\e_{t(1-\theta)}^m(x, y) \e_{t\theta}^m(x, y)  \tilde{\phi}_{jk}(y) \dd V_g(y) \, \dd \theta.
    \end{equation*}
    We now write this as an integral over the tangent space $V := T_x M$, by pulling the functions back using the Riemannian exponential map. Here we have to take into account the Jacobian factor $\det(g_{ij})^{1/2}$, where $g_{ij}$ are the components of the metric in normal coordinates around the point $x$. Setting ${\phi}_{jk} :=\det(g_{ij})^{1/2}\cdot  \exp_x^*\tilde{\phi}_{jk}$, we then obtain
        \begin{equation*}
    \begin{aligned}
    q_{jk}(t, x) &=  \int_0^1 (1-\theta)^j\theta^k \int_{V} \e^m_{t(1-\theta)}(v) \e^m_{t\theta}(v)  {\phi}_{jk}(v)\,\dd v\dd \theta\\
    &= (2\pi)^{-2n}\int_0^1 (1-\theta)^j\theta^k \int_{V}\int_{V}\int_{V} e^{i\langle v, \xi + \eta\rangle - t(1-\theta)|\xi|^{2m}-t\theta|\eta|^{2m}} {\phi}_{jk}(v)\,\dd \xi \dd \eta\dd v\dd \theta,
    \end{aligned}
    \end{equation*}
    where we used formula \eqref{FormulaEm} for $\e_t^m$. Substitution $\xi \mapsto t^{-\frac{1}{2m}}\xi$, $\eta \mapsto t^{-\frac{1}{2m}}\eta$, $v \mapsto t^{\frac{1}{2m}}v$ as well as applying Fubini's theorem gives
    \begin{equation*}
    \begin{aligned}
    \int_{V}\int_{V}\int_{V}&e^{i\langle v, \xi + \eta\rangle - t(1-\theta)|\xi|^{2m}-t\theta|\eta|^{2m}} {\phi}_{jk}(v)\,\dd \xi \dd \eta\dd v 
     \\
     &~~~~~~~~~~~~= t^{-\frac{n}{2m}}\int_{V}\int_{V}\int_{V} {\phi}_{jk}(t^{\frac{1}{2m}}v) e^{i\langle v, \xi + \eta\rangle - (1-\theta)|\xi|^{2m}-\theta|\eta|^{2m}}\dd \xi \dd \eta \dd v\\
     &~~~~~~~~~~~~= t^{-\frac{n}{2m}}\int_{V} {\phi}_{jk}(t^{\frac{1}{2m}}v) \mathscr{F}[{\gamma}^m_{1-\theta}](v) \mathscr{F}[{\gamma}^m_\theta](v) \dd v\\
     &~~~~~~~~~~~~= t^{-\frac{n}{2m}} \int_{V} {\phi}_{jk}(t^{\frac{1}{2m}}v) \mathscr{F}[\gamma^m_{1-\theta}*\gamma^m_\theta](v) \dd v,
     \end{aligned}
    \end{equation*}
    where $\mathscr{F}[{\gamma}_\theta]$ is the Fourier transform of $\gamma^m_\theta(\xi) := e^{-\theta |\xi|^{2m}}$ and $*$ denotes convolution. Clearly, $\mathscr{F}[\gamma^m_{1-\theta}*\gamma^m_\theta]$ is a Schwartz function on $V$. Now the function from $\R^+$ to the space of tempered distributions $\mathscr{S}^\prime(V)$ given by sending $t \in \R^+$ to the function $[ v \mapsto {\phi}_{jk}(t^{\frac{1}{2m}} v)]$ (considered as a distribution) has an asymptotic expansion in $\mathscr{S}^\prime(V)$ as $t \rightarrow 0$, given by its Taylor expansion around zero,
    \begin{equation*}
      \phi_{jk}(t^{\frac{1}{2m}} v) ~\sim~ \sum_{\alpha \in \N_0^n} t^{\frac{|\alpha|}{2m}} \frac{D^\alpha \phi_{jk}(0)}{\alpha!}   \, v^\alpha,
    \end{equation*}
    where we used the standard multi-index notation. We obtain the asymptotic expansion 
    \begin{equation*}
    \int_{V} \phi_{jk}(t^{\frac{1}{2m}}v) \mathscr{F}[{\gamma^m_{1-\theta}*\gamma^m_\theta}](v) \dd v ~\sim~ \sum_{\alpha \in \N_0^n}  t^{\frac{|\alpha|}{2m}} \frac{D^\alpha \phi_{jk}(0)}{\alpha!} \int_{V} v^\alpha \mathscr{F}[{\gamma^m_{1-\theta}*\gamma^m_\theta}](v) \dd v,
    \end{equation*}
    and standard manipulations give
    \begin{equation*}
    \begin{aligned}
      \int_{V} v^\alpha \mathscr{F}[{\gamma^m_{1-\theta}*\gamma^m_\theta}](v) \dd v &= (-i)^{|\alpha|}\int_{V} \mathscr{F}[{D^\alpha\gamma^m_{1-\theta}*\gamma^m_\theta}](v) \dd v \\
      &= (2 \pi)^{n}(-i)^{|\alpha|}\bigl(D^\alpha \gamma^m_{1-\theta} * \gamma^m_\theta\bigr)(0).
    \end{aligned}
    \end{equation*}
    On the other hand, as $\gamma^m_{1-\theta}*\gamma^m_\theta$ is an even function, its Fourier transform is real. Since $(D^\alpha \gamma^m_{1-\theta} * \gamma^m_\theta\bigr)(0)$ is real as well the above equality gives that only terms with $|\alpha|$ even can be non-zero.
    We obtain that $q_{jk}(t, x)$ has the asymptotic expansion
    \begin{equation*}
      q_{jk}(t, x) ~\sim~ (2 \pi)^{-n} \sum_{l=0}^\infty t^{\frac{l}{m} - \frac{n}{2m}} \sum_{|\alpha|=2l } \frac{(-i)^{|\alpha|}}{\alpha!} D^\alpha \phi_{jk}(0) \int_0^1 (1-\theta)^j \theta^k \bigl(D^\alpha \gamma^m_{1-\theta} * \gamma^m_\theta\bigr)(0)\, \dd \theta.
    \end{equation*}
    Putting everything together, we obtain that
    \begin{equation*}
     (Q_tf)(x) ~\sim~ \sum_{j, k, l = 0}^\infty t^{\frac{j+k+l}{m} - \frac{n}{2m}} \sum_{|\alpha| = 2l} c^\alpha_{jk} D_y^\alpha \bigl\{ \det\bigl(g_{ij}(y)\bigr)^{1/2}\Phi_j(x, y) \Phi_k(x, y) f(y)\bigr\}_{y=x},
    \end{equation*}
    where the constants $c^\alpha_{jk}$ are given by
    \begin{equation*}
      c^\alpha_{jk} := \frac{(-i)^{|\alpha|}}{\alpha! (2 \pi)^{n} \Gamma\left(\frac{j}{m}+1\right)\Gamma\left(\frac{k}{m}+1\right)} \int_0^1 (1-\theta)^j \theta^k \bigl(D^\alpha \gamma^m_{1-\theta} * \gamma^m_\theta\bigr)(0)\, \dd \theta.
    \end{equation*}
    Now $Q_if(x)$ is given by
    \begin{equation*}
      Q_i f(x) = \sum_{j, k, l = i} \sum_{|\alpha| = 2l} c^\alpha_{jk} D_y^\alpha \bigl\{ \det\bigl(g_{ij}(y)\bigr)^{1/2}\Phi_j(x, y) \Phi_k(x, y) f(y)\bigr\}_{y=x},
    \end{equation*}
    which is clearly a differential operator of order $2i$. The leading term is the one with $l=i$ and $j=k=0$, with all the derivatives falling onto $f$, i.e.\
    \begin{equation*}
    Q_i f(x) = \sum_{|\alpha| = 2i} c^\alpha_{00} \id_{\V_x} \cdot D_y^\alpha \bigl\{ f(y)\bigr\}_{y=x} + \text{lower order terms}.
    \end{equation*}
    However, since $Q_i$ is independent of the choice of normal coordinate system, the principal symbol of this operator at $x$ must be invariant under orthogonal transformations of $V = T_xM$. Since the principal symbol is a scalar multiple of $\id_{\V_x}$ and the only scalar $O(n)$-invariant homogeneous polynomial of order $2i$ is $|\xi|^{2i}$ (up to scaling), the principal symbol of $Q_i$ must be $\id_{\V_x} \cdot |\xi|^{2i}$.
\end{proof}

We are now in the position to prove Thm.~\ref{ThmMassInvariance}.

   \begin{proof}[of Thm.~\ref{ThmMassInvariance}]
   Notice first that by \eqref{MellinTransform}, we have for any $R>0$ and $s \in \C$ satisfying $\mathrm{Re}(s) > \frac{n}{2m}$
   \begin{equation} \label{ZetaFormulaR}
   \begin{aligned}
     \zeta_g(s, x) &= \frac{1}{\Gamma(s)} \int_0^R t^{s-1} p_t^g(x, x) \dd t - \frac{1}{\Gamma(s)} \int_0^R t^{s-1} p_t^{\leq 0}(x, x) \dd t \\
     &~~~+ \frac{1}{\Gamma(s)} \int_R^\infty t^{s-1} p_t^+(x, x) \dd t+ L_-^{-s}(x,x)
     \end{aligned}
   \end{equation}
   where $p_t^{\leq 0}(x, y) := p_t^g(x, y) - p_t^+(x, y)$ is the non-positive spectral part of the heat kernel and $L_-^{-s}(x, y)$ is the negative spectral part of $L^{-s}$ (see \eqref{NegativeSpectralPartOfL} and \eqref{PositiveHeatKernel}). 
   
    {\em Step 1.} We  differentiate the formula \eqref{ZetaFormulaR} in direction of a conformal change. By Lemma~\ref{LemmaDerivativeBound}, we may exchange differentiation and integration over $p_t^+(x, x)$. By Lemma~\ref{LemmaHeatKernelVariation}, we have
   \begin{equation*}
   \begin{aligned}
     &\int_0^R t^{s-1} \delta_f p^g_t(x, x) \dd t \\
     &= -2m \int_0^R t^{s-1} \int_0^t \int_M \frac{\partial}{\partial t} p^g_{t-u}(x, y) f(y) p^g_u(y, x) \dd V_g(y) \dd u \dd t - nf(x) \int_0^R t^{s-1}p^g_t(x, x) \dd t\\
     &= -2m R^s Q_R f(x) + 2m(s-1) \int_0^R t^{s-1} (Q_t f)(x) \dd t + (2m-n) f(x) \int_0^R t^{s-1} p^g_t(x, x) \dd t,
   \end{aligned}
   \end{equation*}
   where $Q_t$ is the operator from Lemma~\ref{LemmaAAsymptotics} and we integrated by parts in the second step.  Differentiating equation \eqref{ZetaFormulaR} and plugging this in, we get
   \begin{equation} \label{LargeFormula}
   \begin{aligned}
      \delta_f\zeta_g&(s, x) = (2m -n)f(x) \zeta_g(s, x) + \frac{2m}{\Gamma(s-1)} \int_0^R t^{s-1} (Q_t f)(x) \dd t- \frac{2m R^s}{\Gamma(s)} (Q_R f)(x) \\
      &+ (2m-n) f(x) \left[ \frac{1}{\Gamma(s)} \int_0^R t^{s-1} p_t^{\leq 0}(x, x) \dd t - \frac{1}{\Gamma(s)} \int_R^\infty t^{s-1} p_t^+(x, x) \dd t - L^{-s}_-(x, x) \right]\\
      &- \frac{1}{\Gamma(s)} \int_0^R t^{s-1} \delta_f p_t^{\leq 0}(x, x) \dd t + \frac{1}{\Gamma(s)} \int_R^\infty t^{s-1} \delta_f p_t^+(x, x) \dd t + \delta_f L_-^{-s}(x, x).
    \end{aligned}
   \end{equation}
   We now proceed to evaluate this formula at $s=1$, keeping in mind that if the dimension $n$ is even and $m \leq \frac{n}{2}$, we cannot evaluate  directly but instead need to take the finite part at $s=1$  on both sides. Here the integral over $Q_tf$ needs to be considered separately. 
   By Lemma \eqref{LemmaAAsymptotics}, we can write for $N \in \N$ large 
   \begin{equation*}
   (Q_t f)(x) = \sum_{j=0}^N t^{\frac{j}{m}-\frac{n}{2m}} ({Q}_j f)(x) + R^N_t(x)
   \end{equation*}
  for some remainder term $R_t^N(x)$ satisfying $|R^N_t (x)| \leq t^{\frac{N+1}{m} -\frac{n}{2m}}$. Therefore, we get
   \begin{equation}\label{IntegralOverQ}
     \begin{aligned}
       \int_0^R t^{s-1} (Q_tf)(x) \dd t &=\sum_{j=0}^N ({Q}_j f)(x) \int_0^R  t^{s-1+\frac{j}{m}-\frac{n}{2m}} \dd t  + \int_0^R t^{s-1} R_t^N(x)\dd t \\
       &= \sum_{j=0}^N({Q}_j f)(x)  \frac{R^{s+\frac{j}{m}-\frac{n}{2m}}}{s -\frac{n}{2m} + \frac{j}{2m}} +  \int_0^R t^{s-1} R_t^N(x)\dd t
     \end{aligned}
     \end{equation}
     for all $s \in \C$ with $\mathrm{Re}(s)$ large enough, depending on $N$.
    By choosing $N > \frac{n}{2}-m$, we can make \eqref{IntegralOverQ} valid in a neighborhood of $s=1$, by the estimate on the remainder term $R_t^N(x)$. We now need to divide by $\Gamma(s-1)$ and evaluate at $s=1$. Notice however, that the function $\Gamma(s-1)$ has a simple pole at $s=1$ with reside one, so its inverse has a simple zero with derivative one. Hence if \eqref{IntegralOverQ} is regular at $s=1$, we obtain zero. This is the case if $n$ is odd or if $m > \frac{n}{2}$, while if $n$ is even with $m \leq \frac{n}{2}$, the integral over $Q_t$ has a simple pole at one coming from the term with $j=\frac{n}{2} - m$. We obtain
     \begin{equation*}
        \left.\frac{2m}{\Gamma(s-1)} \int_0^R t^{s-1} (Q_tf)(x) \dd t \right|_{s=1} = \begin{cases} 0 & \text{if}~n~\text{is odd or}~m > \frac{n}{2}\\
          2m {Q}_{\frac{n}{2}-m} f(x) & \text{otherwise.}
          \end{cases}
     \end{equation*}
     All other terms of \eqref{LargeFormula} can be directly evaluated at one, so if $n$ is odd or $m > \frac{n}{2}$, we obtain  that
     \begin{equation*}
     \begin{aligned}
        \delta_f\zeta_g(1, x) &= (2m-n)f(x) \zeta_g(1, x) - 2m R (Q_R f)(x)\\
        &~~~~~~+ (2m-n) f(x) \left[ \int_0^R p_t^{\leq 0}(x, x) \dd t -\int_R^\infty p_t^+(x, x) \dd t - L^{-1}_-(x, x) \right]\\
      &~~~~~~-\int_0^R\delta_f p_t^{\leq 0}(x, x) \dd t + \int_R^\infty \delta_f p_t^+(x, x) \dd t + \delta_f L_-^{-1}(x, x)
     \end{aligned}
     \end{equation*}  
     while if $n$ is even and $m \leq \frac{n}{2}$, we get
     \begin{equation*}
     \begin{aligned}
        \delta_f\bigl\{\mathrm{f.p.}_{s=1}\zeta_g(s, x)\bigr\} &= (2m-n)f(x) \mathrm{f.p.}_{s=1}\zeta_g(s, x) +2m ({Q}_{\frac{n}{2}-m}f)(x)- 2m R (Q_R f)(x)\\
        &~~~~~~+ (2m-n) f(x) \left[ \int_0^R p_t^{\leq 0}(x, x) \dd t -\int_R^\infty p_t^+(x, x) \dd t - L^{-1}_-(x, x) \right]\\
      &~~~~~~-\int_0^R\delta_f p_t^{\leq 0}(x, x) \dd t + \int_R^\infty \delta_f p_t^+(x, x) \dd t + \delta_f L_-^{-1}(x, x).
     \end{aligned}
     \end{equation*}
     {\em Step 2.} Since these formulas are valid for all $R>0$, the plan is now to take the limit as $R \rightarrow \infty$. Notice that if $L$ is a positive operator, then one can directly see that all the terms involving $R$ tend to zero as $R \rightarrow \infty$ and we are left with the variation formulas claimed. In particular, we have
\begin{equation*}
   \int_R^\infty p_t^+(x, x) \dd t, ~~\int_R^\infty \delta_f p_t^+(x, x) \dd t ~~~\longrightarrow 0
\end{equation*}     
     as $R \rightarrow 0$, the first term by decay properties of $p_t^+$ and the second by Lemma~\ref{LemmaDerivativeBound}.
     
      In the general case, we have to invest some more work. Namely, we will see that each individual summand has an asymptotic expansion as $R \rightarrow \infty$, where the unbounded terms consist of a finite linear combination of $R$, $e^{-R\lambda_j}$ and $Re^{-R\lambda_j}$, with $\lambda_j$ running over the negative eigenvalues of $L_g$. Since we a priori know that the term converges, we get that the sum of the coefficients of the exploding terms must vanish. In particular, we only need to calculate the constant term of the asymptotic expansion, which will turn out to be contained (if present) in the term $R (Q_R f)(x)$.
     
    First notice that we have
     \begin{equation*}
     \begin{aligned}
       \int_0^R p_t^{\leq 0}(x, x) \dd t &=R \Pi(x, x) + \sum_{\lambda_j < 0} \left(\int_0^R e^{-t \lambda_j} \dd t \right) \varphi_j(x)^2  \\
       &=R \Pi(x, x)  + L_-^{-1}(x, x) - \sum_{\lambda_j < 0} \frac{e^{-\lambda_j R}}{\lambda_j} \varphi_j(x)^2
       \end{aligned}
     \end{equation*}
     so that we are left to evaluate the limit as $R \rightarrow \infty$ of the term
     \begin{equation} \label{RTerm}
     \begin{aligned}
        -2mR (Q_R f)(x) &+ \bigl((2m-n) f(x) - \delta_f\bigr) \left(R\Pi(x, x) - \sum_{\lambda_j < 0} \frac{e^{-\lambda_j R}}{\lambda_j} \varphi_j(x)^2\right).
      \end{aligned}
     \end{equation}
     We have
   \begin{equation} \label{ExpansionQR}
   \begin{aligned}
     R(Q_Rf)(x) &= \int_0^R \int_M p_{R-u}^+(x, y) f(y) p_u^+(y, x) \dd V_g(y)\dd u \\
     &~~~~~~~~~~+  \int_0^R \int_M p_{R-u}^{\leq0}(x, y) f(y) p_u^+(y, x) \dd V_g(y)\dd u \\
     &~~~~~~~~~~~~~~~~~~~~+\int_0^R \int_M p_{R-u}^+(x, y) f(y) p_u^{\leq 0}(y, x) \dd V_g(y)\dd u \\
     &~~~~~~~~~~~~~~~~~~~~~~~~~~~~~~+  \int_0^R \int_M p_{R-u}^{\leq 0}(x, y) f(y) p_u^{\leq 0}(y, x) \dd V_g(y)\dd u
   \end{aligned}
   \end{equation}
   To calculate the asymptotic expansions of these terms, we furthermore split
   \begin{equation*}
     p_u^{\leq 0}(x, x) = \Pi(x, x) + \sum_{\lambda_j < 0} e^{-u\lambda_j} \varphi_j(x) \otimes \varphi_j(x)^* = \Pi(x, x) + p_u^-(x, x)
   \end{equation*}
   in terms of the eigendecomposition of $L$.
   For the first term of \eqref{ExpansionQR}, we get
   \begin{equation*}
   \begin{aligned}
      \left| \int_0^R \int_M p_{R-u}^+(x, y) f(y) p_u^+(y, x) \dd V_g(y) \dd u\right| &
      \leq \|f\|_{\infty}\int_0^R \int_M p_{R-u}^+(x, y) p_u^+(y, x)\dd V_g(y) \dd u \\
      &= \|f\|_{\infty} R p_R^+(x, x),
   \end{aligned}
   \end{equation*}
   which converges to zero as $R \rightarrow \infty$. The second term of \eqref{ExpansionQR} yields
   \begin{equation*}
   \begin{aligned}
    \int_0^R \int_M p_{R-u}^{\leq0}(x, y) f(y) p_u^+(y, x) \dd V_g(y) \dd u &=  \int_0^R \int_M \Pi(x, y) f(y) p_u^+(y, x)  \dd V_g(y)\dd u\\
    &~~~~+ \int_0^R \int_M p_{R-u}^{-}(x, y) f(y) p_u^+(y, x) \dd V_g(y) \dd u
    \end{aligned}
   \end{equation*}
   Here we have
   \begin{equation*}
   \begin{aligned}
   \lim_{R \rightarrow \infty} \int_0^R \int_M \Pi(x, y) f(y) p_u^+(y, x) \dd V_g(y)\dd u  &= \int_M \Pi(x, y) f(y) L_+^{-1}(y, x) \dd V_g(y)\\
   &= [\Pi f L_+^{-1}](x, x)
   \end{aligned}
   \end{equation*} 
   and
   \begin{equation*}
   \begin{aligned}
      \int_0^R \int_M &p_{R-u}^{-}(x, y) f(y) p_u^+(y, x) \dd V_g(y) \dd u\\ 
      &= \sum_{\lambda_j < 0} \varphi_j(x)  e^{-R\lambda_j} \int_0^R \int_M f(y) \varphi_j(y)^* e^{u\lambda_j} p_{u}^+(y, x) \dd V_g(y) \dd t\\
      &= \sum_{\lambda_j < 0} \varphi_j(x) e^{-R\lambda_j} \int_0^R \int_M (L_g-\lambda_j)^{-1} \bigl\{ f \varphi_j\bigr\}(y)^* \frac{\partial}{\partial u} \bigl\{ e^{u\lambda_j} p_{u}^+(y, x)\bigr\} \dd V_g(y) \dd t\\
      &= \sum_{\lambda_j < 0} \varphi_j(x) \int_M (L_g-\lambda_j)^{-1} \bigl\{ f \varphi_j\bigr\}(y)^*  p_R^+(y, x)\dd V_g(y) \\
      &~~~~~~~~~~~~~~~~~~~~~~~~~~~~~~~~~~~~~~~~~~~-  \sum_{\lambda_j < 0} \varphi_j(x) e^{-R\lambda_j} \Pi_g^+ (L_g-\lambda_j)^{-1} \bigl\{ f \varphi_j\bigr\}(x)^*,
   \end{aligned}
   \end{equation*}
   where $\Pi_g^+$ is the projection on the positive spectral part of $L_g$, i.e.\ $\Pi_g^+\varphi_j = \varphi_j$ if $\lambda_j >0$ and $\Pi_g^+\varphi_j = 0$ otherwise. The first term here again converges to zero as $R\rightarrow \infty$, so that we have the asymptotic expansion
   \begin{equation*}
   \begin{aligned}
   \int_0^R &\int_M p_{R-u}^{\leq0}(x, y) f(y) p_u^+(y, x) \dd V_g(y) \dd u \\
   &~~~~~~~~~~~~~~~~~~\sim~   [\Pi f L_+^{-1}](x, x) - \sum_{\lambda_j < 0} e^{-R\lambda_j} \varphi_j(x) \Pi_g^+ (L-\lambda_j)^{-1} \bigl\{ f \varphi_j\bigr\}(y)^* + o(1).
   \end{aligned}
   \end{equation*} 
   The third term of \eqref{ExpansionQR} just yields the pointwise transpose inside $\mathrm{End}(\V_x)$ of this term. The fourth term of \eqref{ExpansionQR} finally gives
   \begin{equation*}
   \begin{aligned}
   \int_0^R &\int_M p_{R-u}^{\leq 0}(x, y) f(y) p_u^{\leq 0}(y, x) \dd V_g(y)\dd u 
   = R [\Pi f \Pi](x, x) \\
   &+ \sum_{\substack{\lambda_i, \lambda_j \leq 0\\ \lambda_i \neq \lambda_j}} \varphi_i(x)\varphi_j(x)^* \frac{e^{-R\lambda_j} - e^{-R\lambda_i}}{\lambda_i - \lambda_j} (\varphi_i, f \varphi_j)_{L^2}+ \sum_{\lambda_j < 0} \varphi_j(x) \varphi_j(x)^* R e^{-R\lambda_j} (\varphi_j, f\varphi_j)_{L^2}
   \end{aligned}
   \end{equation*} 
   The constant term in $R$ is here given by
   \begin{equation*}
   \begin{aligned}
   \sum_{\lambda_i =0, \lambda_j < 0} \varphi_i(x)\varphi_j(x)^* \frac{1}{\lambda_j} (\varphi_i, f \varphi_j)_{L^2}
   + \sum_{\lambda_i <0, \lambda_j = 0} \varphi_i(x)&\varphi_j(x)^* \frac{1}{\lambda_i} (\varphi_i, f \varphi_j)_{L^2}\\
   &= [\Pi f L_-^{-1}](x, x) + [L_-^{-1}f \Pi ](x, x)
   \end{aligned}
   \end{equation*} 
   Putting all terms together and ignoring all divergent terms (knowing that these must cancel), this shows that in the limit $R\rightarrow \infty$ of \eqref{RTerm}, we get
   \begin{equation*}
   \begin{aligned}
    [\Pi_g f L_+^{-1}](x, x) + [L_+^{-1}f \Pi_g &](x, x) + [\Pi_g f L_-^{-1}](x, x) + [L_-^{-1}f \Pi_g ](x, x) \\
    &=  [\Pi_g f L_g^{-1}](x, x) + [L_g^{-1}f \Pi_g ](x, x) =  2[\Pi_g f L_g^{-1}](x, x),
    \end{aligned}
   \end{equation*}
   where the last step follows since the integral kernel of the anti-symmetric part of a smoothing operator evaluates to zero on the diagonal.
  \end{proof}

\begin{remark}
  Taking the residue of \eqref{LargeFormula} at $s=1$ on both sides, we obtain that
  \begin{equation*}
     \delta_f \bigl\{ \res_{s=1} \zeta_g(s, x)\bigr\} = (2m-n)f(x) \res_{s=1}\zeta_g(s, x),
  \end{equation*}
  which gives the well-known pointwise invariant in even dimensions.
\end{remark}

Let us remark that collecting the terms linear in $R$ in the above proof, we obtain the following result.

\begin{corollary}
The integral kernel of the projection onto the kernel $\Pi_g$ evaluated at the diagonal satisfies the variation formula
\begin{equation*}
  \delta_f \Pi_g(x, x) = (2m-n)f(x) \Pi_g(x, x) - 2m[\Pi_g f \Pi_g](x, x).
\end{equation*}
\end{corollary}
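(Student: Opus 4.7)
The plan is to extract the coefficient of $R$ (linear in $R$) from the identity obtained in the proof of Thm.~\ref{ThmMassInvariance}. Recall that the expression \eqref{RTerm}, viewed as a function of $R$, must have a finite limit as $R \rightarrow \infty$, since both sides of the variation formula for $\zeta_g(1,x)$ are independent of $R$. The analysis in the proof shows that the $R$-dependent terms one obtains after expanding $R(Q_R f)(x)$ via \eqref{ExpansionQR} have the structure of a polynomial in $R$ plus finite linear combinations of $e^{-R\lambda_j}$ and $R e^{-R\lambda_j}$ with $\lambda_j < 0$. These functions of $R$ are linearly independent, so convergence of the sum as $R \to \infty$ forces the coefficient of each divergent mode to vanish separately. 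In the course of the original proof only the coefficient of $1$ was used to derive the variation formula for $\zeta_g(1,x)$; here I will instead read off the coefficient of $R$.

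First I would isolate the contribution linear in $R$ from the second summand in \eqref{RTerm}, namely
\begin{equation*}
  \bigl((2m-n)f(x) - \delta_f\bigr)\!\left(R \Pi_g(x,x) - \sum_{\lambda_j<0} \frac{e^{-R\lambda_j}}{\lambda_j}\varphi_j(x)\otimes\varphi_j(x)^*\right),
\end{equation*}
whose linear-in-$R$ part is simply $\bigl((2m-n)f(x) - \delta_f\bigr) R\,\Pi_g(x,x)$. Next I would isolate the contribution linear in $R$ from $-2m R(Q_R f)(x)$ using the decomposition \eqref{ExpansionQR}: the first term is bounded by $\|f\|_\infty R \,p_R^+(x,x) \to 0$, the second and third terms only generate finite limits plus exponential corrections (as computed in the proof), and of the fourth term
\begin{equation*}
  \int_0^R \int_M p_{R-u}^{\leq 0}(x,y) f(y) p_u^{\leq 0}(y,x) \,\dd V_g(y)\,\dd u = R[\Pi_g f\Pi_g](x,x) + (\text{exponentials}),
\end{equation*}
only the first piece contributes a term linear in $R$. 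Thus the total coefficient of $R$ coming from $-2m R(Q_R f)(x)$ is $-2m[\Pi_g f \Pi_g](x,x)$.

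Equating the total coefficient of $R$ in \eqref{RTerm} to zero then yields
\begin{equation*}
  (2m-n)f(x)\,\Pi_g(x,x) - \delta_f \Pi_g(x,x) - 2m[\Pi_g f \Pi_g](x,x) = 0,
\end{equation*}
which rearranges to the claimed variation formula. The only point requiring care is the linear-independence argument that justifies reading off coefficients mode by mode; this is straightforward since there are only finitely many negative eigenvalues and the functions $1, R, e^{-R\lambda_j}, Re^{-R\lambda_j}$ (with distinct $\lambda_j < 0$) are linearly independent on $[1,\infty)$. I expect no serious obstacle here, as the entire bookkeeping has already been assembled in the proof of Thm.~\ref{ThmMassInvariance} and only needs to be reinterpreted by tracking a different coefficient.
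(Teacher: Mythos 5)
Your proposal is correct and is essentially the paper's own argument: the paper proves this corollary precisely by collecting the terms linear in $R$ in the proof of Thm.~\ref{ThmMassInvariance}, using that the divergent modes in \eqref{RTerm} must cancel since the expression converges as $R \rightarrow \infty$. Your bookkeeping of which pieces of \eqref{ExpansionQR} contribute to the pure-$R$ mode (only the $R[\Pi_g f \Pi_g](x,x)$ term) agrees with the paper's computation.
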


    \appendix

\bibliography{Literatur}

\end{document}